\crefname{theorem}{Theorem}{Theorems}
\crefname{thm}{Theorem}{Theorems}
\crefname{mainthm}{Theorem}{Theorems}
\crefname{lemma}{Lemma}{Lemmas}
\crefname{lem}{Lemma}{Lemmas}
\crefname{remark}{Remark}{Remarks}
\crefname{claim}{Claim}{Claims}
\crefname{subclaim}{Sub-claim}{Sub-claims}
\crefname{prop}{Proposition}{Propositions}
\crefname{proposition}{Proposition}{Propositions}
\crefname{defn}{Definition}{Definitions}
\crefname{corollary}{Corollary}{Corollaries}
\crefname{conjecture}{Conjecture}{Conjectures}
\crefname{question}{Question}{Questions}
\crefname{chapter}{Chapter}{Chapters}
\crefname{section}{Section}{Sections}
\crefname{figure}{Figure}{Figures}
\crefname{table}{Table}{Tables}
\theoremstyle{plain}
\newtheorem{theorem}{Theorem}
\newtheorem*{theorem*}{Theorem}
\newtheorem{lemma}[theorem]{Lemma}
\newtheorem{corollary}[theorem]{Corollary}
\newtheorem{proposition}[theorem]{Proposition}
\theoremstyle{definition}
\newtheorem{definition}[theorem]{Definition}
\newtheorem{example}[theorem]{Example}
\newtheorem{problem}[theorem]{Problem}
\theoremstyle{remark}
\numberwithin{theorem}{section}
\numberwithin{equation}{section}
\newcommand{\eps}{\varepsilon}
\newcommand{\C}{{\mathcal C}}
\newcommand{\RR}{{\mathbb R}}
\newcommand{\ZZ}{{\mathbb Z}}
\newcommand{\EE}{{\mathbb E}}
\newcommand{\PP}{{\mathbb P}}
\newcommand{\G}{{\mathcal G}}
\newcommand{\K}{{\mathcal K}}
\newcommand{\E}{{\mathcal E}}
\renewcommand{\H}{{\mathcal H}}
\renewcommand{\L}{{\mathcal L}}
\newcommand{\1}{{\bf 1}}
\newcommand{\Birk}{\operatorname{Birk}}
\renewcommand{\le}{\leqslant}
\renewcommand{\ge}{\geqslant}
\def\@tocline#1#2#3#4#5#6#7{\relax
  \ifnum #1>\c@tocdepth 
  \else
    \par \addpenalty\@secpenalty\addvspace{#2}%
    \begingroup \hyphenpenalty\@M
    \@ifempty{#4}{%
      \@tempdima\csname r@tocindent\number#1\endcsname\relax
    }{%
      \@tempdima#4\relax
    }%
    \parindent\z@ \leftskip#3\relax \advance\leftskip\@tempdima\relax
    \rightskip\@pnumwidth plus4em \parfillskip-\@pnumwidth
    #5\leavevmode\hskip-\@tempdima
      \ifcase #1
       \or\or \hskip 1em \or \hskip 2em \else \hskip 3em \fi%
      #6\nobreak\relax
    \dotfill\hbox to\@pnumwidth{\@tocpagenum{#7}}\par
    \nobreak
    \endgroup
  \fi}
\keywords{Bradley--Terry model;
Coxeter permutahedra;
digraph;
Havel–Hakimi algorithm;
majorization;
paired comparisons; 
permutahedron;
root system;
score sequence; 
signed graph;
submodular function; 
tournament;
zonotope}
\subjclass[2010]{05C20;	
11P21;	
17B22;	
20F55;	
52B05;	
62J15}	
\author[B. Kolesnik]{Brett Kolesnik}
\address{Department of Statistics, University of Warwick}
\email{brett.kolesnik@warwick.ac.uk}
\author[M. Sanchez]{Mario Sanchez}
\address{Department of Mathematics, Cornell University}
\email{mariosanchez@cornell.edu}
\begin{document}

\title
[Coxeter tournaments]
{Coxeter tournaments}

\begin{abstract}
We describe the Coxeter permutahedra, recently studied
by Ardila, Castillo, Eur and Postnikov, in terms of random Coxeter
tournaments, 
which involve cooperative and solitaire games, as well
as the usual competitive games in graph tournaments. 
In this way, we
establish a Coxeter version of Moon's theorem on random tournaments.
We present a geometric proof by the Mirsky--Thompson generalized
Birkhoff's theorem, 
a probabilistic proof by Strassen's coupling theorem,
and an algorithmic proof by a Coxeter analogue of the 
Havel--Hakimi
algorithm. 
These proofs have interpretations 
in terms of players choosing
competitors/collaborators with respect 
to relative weakness/strength.
We also introduce a natural Coxeter analogue  
of the 
Bradley--Terry model, 
from the statistical theory of paired comparisons. 
\end{abstract}

\maketitle

\tableofcontents

\section{Introduction}\label{S_intro}

{\it Coxeter combinatorics},
named after H.\ S.\ M.\ Coxeter,  
is motivated  by
the observation that a large variety of combinatorial objects, 
such as matroids, posets, graphs, and the associahedron, etc., 
are connected in many ways to the standard type $\Phi=A_{n-1}$ root system 
and its related algebra, geometry and combinatorics. 
The objective is to find combinatorial objects which generalize the usual type $A_{n-1}$ 
objects to other root systems, particularly  those 
of the types $\Phi=B_n$, $C_n$ and $D_n$. 

For instance, this program has resulted in signed graphs (see Zaslavsky \cite{Zas81,Zas82}), 
Coxeter matroids
which describe decomposition of flag varieties G/B (see Borovik, Gelfand and White \cite{BGW03}), 
parsets 
which relate to root cones in root systems (see Reiner \cite{R92}) 
and Coxeter associahedron which appear in the study of cluster algebras 
(see Hohlweg, Lange and Thomas \cite{HLT11}).

In this work, we extend the classical theory of graph tournaments to the 
Coxeter setting, via the geometric perspective developed in \cite{KS24},
and 
in relation to the Coxeter permutahedra $\Pi_\Phi$ 
studied recently by 
Ardila, Castillo, Eur and Postnikov \cite{ACEP}
(see also Kamnitzer \cite{Kam10}, where they are
called {\it pseudo-Weyl polytopes}). These polytopes
have been described geometrically in terms of 
submodular functions. 

We show that the polytopes $\Pi_\Phi$ can also 
be described in terms of {\it Coxeter tournaments}
(see \cref{S_into_cox_tour}
below), 
which are related
to orientations of signed graphs, 
as introduced by 
Zaslavsky \cite{Zas91}. 
Recall that classical tournaments 
(orientations of the complete graph $K_n$) 
involve only competitive games between players. 
As we will see, in the Coxeter setting,  
collaborative and solitaire games 
naturally arise. 

Although many of our results 
extend to more general root systems, 
we will focus on the infinite families 
of types $A_{n-1}$, $B_n$, $C_n$ and $D_n$. 
If the root system is one
of the finite exceptional 
types (as in Theorem \ref{T_KC} below) 
then games cannot, in our view,  be interpreted quite so naturally. 
We leave the details of such further extensions 
to the interested reader.

\subsection{First in a series}
This work is the first in a series; 
see also the more recent work by 
the first author, Mitchell and Przyby\l owski \cite{KMP25}
and by 
Buckland, the first author, Mitchell and Przyby\l owski \cite{BKMP25}.

Our current focus, in this work, is 
on the geometry of {\it random} Coxeter tournaments. 
Our main results, \cref{T_cox_moon_1,T_cox_moon_2,T_cox_moon_complete} below, 
establish a Coxeter analogue of 
Moon's \cite{M63} classical theorem. 
More specifically, we show that $\Pi_\Phi$ is the set of all possible mean score
sequences of random Coxeter tournaments. 
We give three proofs
of \cref{T_cox_moon_complete} (geometric, probabilistic and algorithmic) in the case of the complete $\Phi$-graphs 
$\G=\K_\Phi$ (as defined in \cref{S_into_cox_moon_comp}
below). 
Since, in the classical (type $A_{n-1}$) setting, 
tournaments are orientations
of $K_n$, 
the cases 
$\G=\K_\Phi$ will be our primary focus.

The next work in this series \cite{KMP25}
studies the combinatorics of 
{\it deterministic} Coxeter tournaments, 
establishing an analogue of Landau's \cite{Lan3} classical theorem, 
answering \cref{OP}(1) below in the case $\G=\K_\Phi$. 
The third work \cite{BKMP25} studies 
the mixing time of 
random walks 
on the sets of Coxeter tournaments 
with given score sequence; or, more specifically, 
on Coxeter analogues of the {\it tournament interchange
graphs} in  
Brualdi and Li \cite{BL84}.

\subsection{Outline}
In \cref{S_Context}, we discuss (classical and Coxeter) tournaments
and their geometry. 
The classical permutahedron and the more recent 
Coxeter generalized permutahedra are discussed in 
\cref{S_into_perm,S_into_cox_perm}, with further background
(on root systems, signed graphs, etc.)\
given in \cref{S_back}. Classical 
tournaments and their connection to 
classical permutahedra are discussed in \cref{S_into_tour}.
Coxeter tournaments are introduced in \cref{S_into_cox_tour}.

Our results 
are presented in \cref{S_Results}. 
The Coxeter analogue of Moon's theorem is stated in full generality in 
\cref{S_into_cox_moon}. 
In the complete case (\cref{S_into_cox_moon_comp}), 
our results are simpler to state, and more methods of proof are available. 
We present a geometric (\cref{S_geo}), probabilistic (\cref{S_prob}) and algorithmic (\cref{S_alg}) proof. 
An extension of the Bradley--Terry model, 
from the statistical theory of paired comparisons, 
to the Coxeter setting is discussed in 
\cref{S_PairedComparisions}. 
Deterministic Coxeter tournaments are 
discussed in \cref{S_into_cox_lan}. 

See \cref{S_proofs,S_complete} for the proofs.

\subsection{Acknowledgements}
We thank Richard Stanley 
for suggesting this line of research, 
in conversation with BK at Persi Diaconis' 
75th birthday conference. 
We also thank 
David Aldous and Federico Ardila 
for useful discussions. 
MS was partially supported by the 
National Science Foundation under Award No.\ DMS-2103391.

\section{Tournaments and polytopes}
\label{S_Context}

Throughout this work, we will as usual let $[n]=\{1,2,\ldots,n\}$. 
Similarly, we put $[-n]=-[n]=\{-1,-2,\ldots,-n\}$ and 
$[\pm n]=[n]\cup[-n]=\{\pm1,\pm 2,\ldots,\pm n\}$. 

In this section, we will give a 
brief discussion on tournaments, 
their relationship with convex geometry, 
and the Coxeter analogues studied in this work. 
Further background and 
formal definitions will be given in 
\cref{S_back} below, after
we state our main results in 
\cref{S_Results}.

\subsection{The permutahedron}\label{S_into_perm}
The {\it permutahedron} $\Pi_{n-1}$ is a classical (type $A_{n-1}$) combinatorial object, obtained by 
taking the convex hull of  
\begin{equation}\label{E_vn}
v_n = (0,1,\ldots,n-1) 
\end{equation}
and   its permutations, that is, 
\begin{equation}\label{E_perm_conv}
\Pi_{n-1} = \operatorname{conv}\{
\sigma\cdot v_n
: \sigma \in S_n\},
\end{equation}
where $S_n$ is the symmetric group, which acts on $\RR^n$ by permuting coordinates.
The reason for the ``$n-1$'' is that $\Pi_{n-1}$ is only 
$(n-1)$-dimensional. 

The permutahedron  is also the {\it graphical zonotope} 
$Z_{K_n}$
of the complete graph $K_n$ on $[n]=\{1,2,\ldots n\}$ (see, e.g., Ziegler \cite{Z95}). 
More specifically, it is the (translated) Minkowski sum of line segments
\begin{equation}\label{E_perm_mink}
\Pi_{n-1}= v_n + \sum_{i<j} [0, e_i-e_j],
\end{equation}
where $e_i$ are the standard basis vectors. 
Note that, in this sum, we have a line segment $[0, e_i-e_j]$
for each edge $ij\in E(K_n)$. 
To geometry of $\Pi_{n-1}$ is inextricably linked with $K_n$. 
Notably, the volume of $\Pi_{n-1}$ is the number of spanning trees
$T\subset K_n$, and the number of lattice points in $\Pi_{n-1}$ is the number of 
spanning forests $F\subset K_n$; 
see Stanley \cite{Sta12}, and 
Postnikov \cite{Pos09} for further generalizations).

We recall that,  
for $x,y\in\RR^n$, we say that $x$ is {\it majorized}
by $y$ 
and write $x\preceq y$ 
if 
$\sum_{i=1}^k (x_\uparrow)_i\ge (y_\uparrow)_i$ for all $k\in[n]$ with equality when $k=n$, where $z_\uparrow$ denotes the non-decreasing rearrangement of 
$z\in\RR^n$. 
Rado \cite{R52} proved that 
the permutahedron has {\it hyperplane description}
\begin{equation}\label{E_perm_hyp}
\Pi_{n-1} = \{x\in \RR^n:x\preceq v_n\},
\end{equation} 
Intuitively, $v_n$ and its permutations are the corners
of the $\Pi_{n-1}$, and all other $x\in \RR^n$
inside the convex hull are ``less spread out.''
Indeed, the concept of majorization (see, e.g., 
the textbook by Marshall, Olkin
and Arnold \cite{MOA11}) was developed in order
compare the ``spread'' in vectors (and matrices).

\subsection{Coxeter permutahedra}\label{S_into_cox_perm}
As discussed above, the 
{\it Coxeter generalized permutahedra}
studied recently in 
Ardila, Castillo, Eur and Postnikov \cite{ACEP} 
(cf.\ Kamnitzer \cite{Kam10})
extend the class of {\it generalized permutahedra} (see  
Postnikov \cite{Pos09}) to the Coxeter setting. Recall that generalized permutahedra
are obtained by deformations of classical permutahedra 
(e.g., the permutahedra, associahedron, cyclohedron, 
Pitman--Stanley \cite{SP02} polytope, etc.)\ 
which preserve directions while relocating faces.

As discussed in \cite{ACEP}, the permutahedron $\Pi_{n-1}$ (as are a number of other
classical type $\Phi=A_{n-1}$ combinatorial objects) is related to the symmetric group $S_n$, 
see \eqref{E_perm_conv} above. However, 
more generally, if $\Phi\subset V$ is the root system corresponding to some reflection 
group $W$, then a {\it $\Phi$-permutahedron} is obtained as the convex
hull of the $W$-orbit of some $v\in V$. 
More specifically, if $\Phi$ is a root system with an associated positive system 
$\Phi^+\subset \Phi$, then the corresponding $\Phi$-permutahedron can be obtained as 
the Minkowski sum (cf.\ \eqref{E_perm_mink})
 \[
 \Pi_{\Phi} = \sum_{\alpha \in \Phi^+} [-\alpha/2, \alpha/2]. 
 \] 
Equivalently (cf.\ \eqref{E_perm_conv}), 
 \[ \Pi_{\Phi} = \operatorname{conv}\{
w\cdot \rho 
 : w \in W \},\]
where 
\[
\rho = \sum_{\alpha \in \Phi^+} \alpha/2 = \sum_{i=1}^n\lambda_i
\] 
is the sum of the fundamental weights (sometimes called
the {\it Weyl vector}).
See \cref{S_root_systems} below for definitions and more details.

In \cite{ACEP}, deformations 
of $\Phi$-permutahedra are studied. It is observed that a number
of objects of interest, such as 
weight polytopes, Coxeter matroids, root cones, etc., are examples
of such Coxeter generalized permutahedra. 

 A main result of \cite{ACEP} shows that 
hyperplane descriptions 
 of Coxeter generalized permutahedra
are described  in terms of  
 submodular functions $h$. 
 In root systems of types $B_n$ and $C_n$,  the functions $h$ 
 are bisubmodular. In type $D_n$, the functions $h$ are a 
 class of  submodular functions called disubmodular.
 Using these functions,  analogues of Rado's theorem \eqref{E_perm_hyp}
 for Coxeter generalized permutahedra 
 are derived, see \cref{thm:submodular generalized permutahedra} below.

\subsection{Tournaments}\label{S_into_tour}
A {\it tournament} on a graph $G=(V,E)$ on $V=[n]$ is an orientation of its edge set. 
Usually, when studying tournaments in combinatorics,
it is assumed that $G=K_n$, but it will allow for general $G$ (often 
simply called an {\it oriented graph} in the literature). 

Informally, 
we think of each vertex as a player. A game is played between 
each pair of vertices joined by an edge, which is then directed towards the winner. 
The associated {\it score sequence} $s\in \ZZ^n$ is the in-degree sequence, listing the total 
number of wins by each player. 

More generally, a {\it random tournament} on $G$ is a collection of probabilities 
$p_{ij}=1-p_{ji}$  for each edge $ij\in E$. In this context, we think of 
$p_{ij}$ as the probability that $i$ wins against $j$.
The {\it mean score sequence} $x\in\RR^n$ lists the expected number of wins by each player, where
each coordinate 
\begin{equation}\label{E_xiA}
x_i=\sum_{ij\in E} p_{ij}.
\end{equation}

\subsubsection{Tournaments and permutahedra}
\label{S_Tournaments_and_permutahedra}
There is a strong connection between tournaments and permutahedra. 
In classical work,  Landau \cite{Lan3} showed that 
$s\in\ZZ^n$ is the score sequence of a tournament on $G=K_n$ if and only if
$s\preceq v_n$, and so, by Rado's theorem \eqref{E_perm_hyp}, if and only if
$s$ is a lattice point of the permutahedron $\Pi_{n-1}$. 
Note that $v_n$ is the score sequence of the tournament on $K_n$ in which 
each player $i$ wins/loses all games against other players $j\neq i$
of smaller/larger index. Also recall that, informally, $s\preceq v_n$ means 
that $s$ is at most as spread out as $v_n$ and has the same total sum.

Extending Landau's result to the setting of random tournaments, 
Moon \cite{M63} showed that $x\in\RR^n$ is the mean score sequence of 
a random tournament on $K_n$ if and only if $x\preceq v_n$. 

Although Rado's work preceded that of Landau, it appears 
that the connection 
with $\Pi_{n-1}$ was not recorded in the literature 
until Stanley \cite{S80} relayed this observation of Zaslavsky. 
Indeed, many proofs by various methods 
of these classical theorems of Landau and Moon have appeared 
in the literature. 
However, 
to our knowledge, none have exploited the 
geometric perspective given by the equivalent 
descriptions \eqref{E_perm_conv}--\eqref{E_perm_hyp} of the permutahedron.

In recent work \cite{KS24}, we have extended the theorems
of Landau and Moon to all multigraphs, 
via a consolidated, geometric proof which capitalizes on 
the theory of zonotopal tilings. 

\begin{theorem}[\hspace{1sp}\cite{KS24}]
\label{T_paper1}
Let $Z_M$ be the graphical zonotope of the multigraph $M$
on vertex set $[n]$.
Then 
\begin{enumerate}
\item $s\in\ZZ^n$ is a score sequence of a tournament on $M$ if and only if $s \in \ZZ^n \cap Z_M$.
\item $x\in\RR^n$ is a mean score sequence of a random tournament
on $M$ if and only if $x\in Z_M$.
\end{enumerate}
\end{theorem}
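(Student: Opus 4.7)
The plan is to identify $Z_M$ explicitly as a Minkowski sum of edge segments and then invoke the total unimodularity of the graphic root system to pass from mean scores to integer scores.

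First I would fix the natural representation $Z_M = c_M + \sum_{e = \{i,j\} \in E(M)} [0, e_i - e_j]$ for a suitable base point $c_M$ (e.g.\ $c_M = \sum_{e = \{i,j\}, i<j} e_j$), so that every point of $Z_M$ records a score distribution: the $k$-th coordinate of $\sum_e (\alpha_e e_i + (1-\alpha_e) e_j)$ is exactly vertex $k$'s share of the wins on its incident edges.

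For part (2), both inclusions then follow by directly unpacking the mean score. Given win probabilities $p_{ij}^e$, one has $x = \sum_e (p_{ij}^e e_i + (1-p_{ij}^e) e_j) \in Z_M$; conversely, writing any $x \in Z_M$ as $\sum_e (\alpha_e e_i + (1-\alpha_e) e_j)$ with $\alpha_e \in [0,1]$ and setting $p_{ij}^e = \alpha_e$ recovers a random tournament with mean score $x$. For part (1), the ``$\Rightarrow$'' direction is the restriction of this calculation to $\alpha_e \in \{0,1\}$, since every orientation is a vertex of the parametrizing hypercube.

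The main content is the converse of (1): every $s \in \ZZ^n \cap Z_M$ must come from some $\{0,1\}$-orientation. The condition $s \in Z_M$ says that the system $Bt = s - c_M$ has a solution $t \in [0,1]^E$, where $B \in \ZZ^{V \times E}$ is the signed edge--vertex incidence matrix of $M$. Since $B$ is totally unimodular and $s - c_M$ is integral, every vertex of this non-empty polytope is integral (Hoffman--Kruskal), producing $t \in \{0,1\}^E$, i.e.\ a genuine tournament orientation with in-degree vector $s$.

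The integrality step is the main obstacle, and the paper's mention of zonotopal tilings signals a geometric form of the same argument: $Z_M$ admits a fine tiling by unimodular parallelepipeds indexed by spanning trees of $M$, so every lattice point of $Z_M$ is a vertex of some tile, hence a $\{0,1\}$-orientation of the tree edges combined with a fixed orientation of the non-tree edges. Both routes rest on unimodularity of the type $A$ root system $\{e_i - e_j\}$, but the tiling viewpoint is the one that transfers cleanly to the Coxeter setting treated in later sections.
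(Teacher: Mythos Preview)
Your proposal is correct. Note first that this paper does not prove \cref{T_paper1} directly: it is quoted from \cite{KS20}. However, the paper's proof of the Coxeter generalization \cref{T_CoxLan} (1) explicitly ``reproves the type $A_{n-1}$ case,'' so there is something to compare against.

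Your two routes for part (1) are both valid but genuinely different in flavor. The Hoffman--Kruskal argument is the more direct one: total unimodularity of the incidence matrix $B$ forces the feasible polytope $\{t\in[0,1]^E: Bt=s-c_M\}$ to have integral vertices, so any lattice point of $Z_M$ is realized by a $\{0,1\}$-orientation. The paper instead takes the zonotopal-tiling route you sketch in your final paragraph: it invokes Shephard's theorem to tile $Z_M$ by parallelepipeds indexed by independent sets, then uses Stanley's lattice point formula together with total unimodularity (via Heller--Tompkins/Hoffman--Gale) to conclude that each tile has no interior lattice points, so every lattice point of $Z_M$ is a vertex of some tile. Both arguments rest on the same unimodularity input; the Hoffman--Kruskal version is shorter and more self-contained, while the tiling version is what the paper needs because it generalizes uniformly to the signed-graph setting of types $B_n$, $C_n$, $D_n$, where the relevant polytope is still a zonotope but the cube-preimage description is less transparent.
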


Furthermore, this zonotopal perspective allows for a refinement of these results, 
which shows that any score sequence can be realized by a 
tournament with at most a ``forest's worth of randomness;''
see \cite{KS24} for details. 

In discussing these results, Stanley \cite{Stan20} asked 
about extending the theory of tournaments to the Coxeter setting
of types $B_n$, $C_n$ and $D_n$. 
The purpose of the current work is to introduce such a theory and describe its connections
with the Coxeter generalized permutahedra \cite{ACEP}. As it turns out, 
these geometric objects can be described in terms of the mean score sequences
of random tournaments on signed graphs (see, e.g., Zaslavsky \cite{Zas81,Zas82})
in which there are cooperative and solitaire games, in addition to the 
usual (type $A_{n-1}$) competitive games in classical tournaments.

\subsection{Coxeter tournaments}\label{S_into_cox_tour}

As discussed above, Coxeter combinatorics
is concerned with extending classical combinatorial 
objects to the Coxeter setting.
In pursuit of this, Zaslavsky \cite{Zas81,Zas82}
defined the notion of a signed graph, 
which corresponds to the Coxeter analogue 
of graphs in other root systems.
These objects have been used, e.g., 
to study subarrangements of the hyperplane arrangement associated to root systems, 
the class of graphical Coxeter matroids, and the volumes 
and Ehrhart polynomials of Coxeter permutahedra, etc. 
They have also found various concrete applications; 
see, e.g., \cite{ACEP} and references therein. 

Let $\Phi$ be a root system of type $A_{n-1}, B_n, C_n$ or $D_n$. 
A (simple) {\it signed graph} $\G$ on vertex set $[n]$ has 
sets of 
\begin{itemize} 
\item negative edges $E^-\subseteq {[n]\choose 2}$,
\item positive edges $E^+\subseteq  {[n]\choose 2}$,
\item half-edges $H \subseteq [n]$,
\item loops $L\subseteq [n]$.
\end{itemize}
Note that $E^+$ and $E^-$ are not necessarily disjoint. 

Next, we define a (signed) {\it $\Phi$-graph} 
as a certain type of signed graph
in one of the root systems 
$\Phi = A_{n-1}$, $B_{n}$, $C_{n}$ or $D_{n}$. 
If $\Phi = A_{n-1}$ then it has only negative
edges 
(that is, $E^+=H=L=\emptyset$) in which case $\G$ is 
(in bijection with) 
a classical graph.  
If $\Phi = B_n$ then it has no loops, $L=\emptyset$.
If $\Phi = C_n$ then it has no half-edges, $H=\emptyset$.
If $\Phi = D_n$ then it has neither half-edges nor loops, 
$H=L=\emptyset$.

Let us emphasize here that what we call negative/positive edges are usually
instead called positive/negative edges in the literature
(see, e.g., \cite{Zas81}). However, the above (reversed) choice of terminology 
is more natural in our current 
context, as negative/positive edges will turn out to represent
competitive/collaborative games in the corresponding tournament
in which, e.g., it will be natural to think about substracting/adding
``strengths,'' etc.

The edges in $\G$ are associated 
with a subset $S \subseteq \Phi^+$ of a positive system $\Phi^+\subset\Phi$. 
The {\it Coxeter graphical zonotope} of $\mathcal{G}$ is the zonotope 
\begin{equation}
\label{E_ZG}
Z_{\mathcal{G}} = \sum_{\alpha \in S} [-\alpha/2, \alpha/2].
\end{equation}
The associated {\it Weyl vector} is 
\begin{equation}
\label{E_rhoG}
\rho_\G=\sum_{\alpha\in S}\alpha/2.
\end{equation}

Extending the definition in \cref{S_into_tour}, we 
can very naturally 
define a {\it random Coxeter tournament} on a 
 $\Phi$-graph $\G$
as a collection of probabilities:
\begin{itemize} 
\item $p_{ij}^-=1-p_{ji}^-$ for each negative edge $ij\in E^-$, 
\item $p_{ij}^+=p_{ji}^+$ for each positive edge $ij\in E^+$, 
\item $p_{i}^h$ for each half-edge $i\in H$, 
\item $p_{i}^\ell$ for each loop $i\in L$. 
\end{itemize}
As above, we view the vertices as players 
and each edge as a certain type of game. 
Negative edges correspond to the usual 
competitive games where one player wins and the 
other loses.
Positive edges correspond to cooperative games 
where both players can win by working together.
Half-edges and loops correspond to solitaire games 
which an individual player can win on their own
(the only difference between that 
loops count for twice as many points). 
More specifically, 
in Coxeter tournaments of types $B_n$, $C_n$ and $D_n$, 
we parametrize the value of games as follows: 

\begin{itemize}
\item In a competitive game (negative edge), one player wins and the other loses $1/2$
point. 
\item In a cooperative game (positive edge), both players 
win or lose $1/2$ point. 
\item In a half-edge (resp.\ loop) solitaire game, a player 
wins or loses
$1/2$ point (resp.\ $1$ point) if in type $B_n$ (resp.\ $C_n$). 
\end{itemize}
Hence, the {\it mean score sequence} of a 
random Coxeter tournament is the sequence $x\in\RR^n$
with coordinates 
$x_i$ as in \eqref{E_xi} below. 
See \cref{E_x} below
for a concise, geometric 
description. 

It might be helpful, 
although rather informal, 
to think of a loop as a 
``collaborative solitaire'' game, so worth twice the points. 
Likewise, a half-edge could be thought
of as a competitive game against an ``external
player'' whose score is not listed in
the mean score sequence.

\section{Results}
\label{S_Results}

\subsection{Moon's theorem}
\label{S_into_cox_moon}
Moon's theorem extends to the Coxeter setting
as follows; cf.\ \cref{T_paper1}(2).

\begin{theorem}
\label{T_cox_moon_1}
Let $\Phi$ be a root system of type $B_n$, $C_n$ or $D_n$. 
Let $\G$ be a $\Phi$-graph with Coxeter graphical zonotope $Z_\G$. 
Then $x \in \RR^n$ is a mean score sequence of 
a random Coxeter tournament on 
$\mathcal{G}$ if and only if $x \in Z_{\mathcal{G}}$.
\end{theorem}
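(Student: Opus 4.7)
The plan is to observe that, by construction, the mean score sequence decomposes as a sum of vectors, one per game, each of which traces out the corresponding segment $[-\alpha/2,\alpha/2]$ as the game's probability varies over $[0,1]$. Both directions of the theorem then follow essentially by matching notation.

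First, I would associate to each game a positive root in $S \subseteq \Phi^+$. A negative edge $\{i,j\} \in E^-$ with $i<j$ corresponds to the root $\alpha = e_i - e_j$; a positive edge $\{i,j\} \in E^+$ with $i<j$ to $\alpha = e_i + e_j$; a half-edge at $i$ (type $B_n$) to $\alpha = e_i$; and a loop at $i$ (type $C_n$) to $\alpha = 2e_i$. By the definition of a $\Phi$-graph, these exhaust the positive roots indexing the segments in \eqref{E_ZG}, and for each game $e$ there is a single scalar parameter $p_e \in [0,1]$.

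Second, I would rewrite \eqref{E_xi} coordinatewise as a vector identity
\begin{equation}\label{E_x}
x \;=\; \sum_{\alpha \in S} \bigl(p_\alpha - \tfrac{1}{2}\bigr)\,\alpha,
\end{equation}
where $p_\alpha$ denotes the probability of the game indexed by $\alpha$. This is a short case check: in the negative-edge case the vector contribution to $(x_i,x_j)$ is $(p_{ij}^- - \tfrac12)(e_i - e_j)$ because $p_{ji}^- = 1 - p_{ij}^-$; in the positive-edge case it is $(p_{ij}^+ - \tfrac12)(e_i + e_j)$ because $p_{ji}^+ = p_{ij}^+$; the half-edge case is immediate; and in the loop case the factor $2$ in \eqref{E_xi} is precisely what is needed so that $2(p_i^\ell - \tfrac12)e_i = (p_i^\ell - \tfrac12)(2e_i)$, absorbing the loop's value into the length of the root.

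Third, the theorem is now immediate from \eqref{E_x}: as $p_\alpha$ ranges bijectively over $[0,1]$, the vector $(p_\alpha - \tfrac12)\alpha$ ranges bijectively over the segment $[-\alpha/2,\alpha/2]$. The set of achievable mean score sequences is therefore exactly the Minkowski sum $\sum_{\alpha \in S}[-\alpha/2,\alpha/2] = Z_{\mathcal G}$. This gives both implications simultaneously, since any $x \in Z_{\mathcal G}$ can be written as $\sum_\alpha t_\alpha \alpha$ with $t_\alpha \in [-\tfrac12,\tfrac12]$, and setting $p_\alpha := t_\alpha + \tfrac12 \in [0,1]$ yields valid game probabilities realizing $x$ as a mean score sequence. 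The only real obstacle is the bookkeeping in the case check leading to \eqref{E_x}, in particular checking that the type-$B_n$ and type-$C_n$ conventions for the values of solitaire games match the lengths of the corresponding positive roots $e_i$ and $2e_i$; once this is confirmed the theorem reduces to a definition.
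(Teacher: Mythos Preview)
Your proposal is correct and follows essentially the same approach as the paper: the paper records the identity $x=\sum_{e\in E(\G)}(p_e-1/2)\Gamma(e)$ as Proposition~\ref{E_x} (by ``direct calculation,'' i.e., your case check), and then in the proof of Theorem~\ref{T_main} observes that this map sends the cube $[0,1]^{|\G|}$ onto $Z_\G$, which is exactly your third step. One harmless discrepancy: the paper's positive system uses $e_i-e_j$ with $i>j$ rather than $i<j$, but since $[-\alpha/2,\alpha/2]=[\alpha/2,-\alpha/2]$ this does not affect the argument.
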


Using the theory of Coxeter generalized permutahedra \cite{ACEP} 
(see \cref{S_root_systems} below for definitions) 
 we can associate 
to $Z_{\mathcal{G}}$  
 a certain function $\mu_{\mathcal{G}}$ on the set of rays of the 
 Coxeter arrangement of $\Phi$ that gives a hyperplane description of $Z_{\mathcal{G}}$. 
More specifically, suppose that 
$\lambda_1, \ldots, \lambda_{n}$ are the fundamental weights of $\Phi$. 
Then $\mu_{\mathcal{G}}: \mathcal{L}\to \RR$ 
is the Coxeter submodular function corresponding to $\G$, where   
\begin{equation}\label{E_calL}
\mathcal{L} = \{
 w\cdot \lambda_k
: w \in W,\, k \in [n]\}
\end{equation}
is the set of fundamental weight conjugates
and $W$ is the Weyl group of $\Phi$.

Coxeter mean score sequences are equivalently classified as follows
(cf.\ \eqref{E_perm_hyp} in relation to Moon's Theorem). 

\begin{theorem}
\label{T_cox_moon_2}
Let $\Phi$ be a root system of type $B_n$, $C_n$ or $D_n$. 
Let $\G$ be a $\Phi$-graph.
Then $x \in \RR^n$ is a mean score sequence of 
a random Coxeter tournament on 
$\mathcal{G}$ if and only if 
$\langle \lambda,x\rangle \le \mu_\G(\lambda)$ for all
$\lambda \in \mathcal{L}$.
\end{theorem}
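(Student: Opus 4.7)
The plan is to bootstrap off \cref{T_cox_moon_1}, which already identifies the set of mean score sequences with the zonotope $Z_{\mathcal{G}}$. So it suffices to show that
\[
x \in Z_{\mathcal{G}} \quad \Longleftrightarrow \quad \langle \lambda, x\rangle \leq \mu_{\mathcal{G}}(\lambda) \text{ for all } \lambda \in \mathcal{L}.
\]
In other words, the task is purely one of polyhedral geometry: produce a hyperplane description of $Z_{\mathcal{G}}$ whose normals lie in the finite set $\mathcal{L}$ of fundamental weight conjugates.

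First I would observe that $Z_{\mathcal{G}}$ is a $\Phi$-generalized permutahedron in the sense of \cite{ACEP}. Indeed, by \eqref{E_ZG} it is a Minkowski summand of the full $\Phi$-permutahedron $\Pi_\Phi = \sum_{\alpha \in \Phi^+} [-\alpha/2, \alpha/2]$ obtained by retaining only the segments indexed by $S \subseteq \Phi^+$. Since deleting a subset of root segments preserves the property of being a weak Minkowski summand of $\Pi_\Phi$, the normal fan of $Z_{\mathcal{G}}$ refines (i.e., is coarsened by) that of $\Pi_\Phi$, namely the Coxeter fan of $\Phi$. This is precisely the condition to be a Coxeter generalized permutahedron.

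Next I would invoke the theorem of Ardila--Castillo--Eur--Postnikov (quoted above as \cref{thm:submodular generalized permutahedra}), which characterizes Coxeter generalized permutahedra by (bi/di)submodular functions on $\mathcal{L}$. Applied to $Z_{\mathcal{G}}$ it yields that $Z_{\mathcal{G}} = \{x : \langle \lambda, x\rangle \leq \mu_{\mathcal{G}}(\lambda) \text{ for all } \lambda \in \mathcal{L}\}$, where $\mu_{\mathcal{G}}$ is the support function of $Z_{\mathcal{G}}$ restricted to $\mathcal{L}$. Because $Z_{\mathcal{G}}$ is a zonotope, this support function admits the explicit formula
\[
\mu_{\mathcal{G}}(\lambda) = \sum_{\alpha \in S} \tfrac{1}{2}|\langle \lambda, \alpha\rangle|,
\]
which we may (if desired) unpack combinatorially in terms of the negative/positive edges, half-edges and loops of $\mathcal{G}$ by reading off how each $\alpha \in S$ pairs with $\lambda$.

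The main obstacle is verifying that $Z_{\mathcal{G}}$ really qualifies as a Coxeter generalized permutahedron in all three types $B_n$, $C_n$, $D_n$, since in type $D_n$ the class is governed by the newer notion of disubmodular functions and the conversion between the Minkowski-sum description and the hyperplane description is slightly more delicate. I would handle this by arguing type-uniformly: since each summand $[-\alpha/2, \alpha/2]$ for $\alpha \in \Phi^+$ is itself a $\Phi$-generalized permutahedron, and this class is closed under Minkowski summation (\cite{ACEP}), the claim follows. Once that point is secured, the equivalence with $x \in Z_{\mathcal{G}}$, combined with \cref{T_cox_moon_1}, completes the proof.
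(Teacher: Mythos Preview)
Your proposal is correct and matches the paper's approach. The paper proves \cref{T_cox_moon_1} and \cref{T_cox_moon_2} simultaneously (as \cref{thm: Coxeter Moon's Theorem}) by exactly your two-step strategy: first identifying the set of mean score sequences with $Z_{\mathcal G}$ via the cube map $\{p_e\}\mapsto \sum_e (p_e-1/2)\Gamma(e)$, and then invoking \cref{thm:submodular generalized permutahedra} to pass to the hyperplane description $\langle \lambda,x\rangle\le h_{Z_{\mathcal G}}(\lambda)$. The only cosmetic difference is that the paper does not pause to argue that $Z_{\mathcal G}$ is a $\Phi$-generalized permutahedron, since this is already built into \cref{def: Coxeter Graphic Zonotopes}; your extra paragraph justifying it (edges of a sub-Minkowski sum of $\Pi_\Phi$ are root-parallel, and the class is closed under Minkowski sums) is a welcome sanity check but not strictly needed.
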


This second description gives explicit formulas 
in terms of the edges of $\mathcal{G}$, 
however, the details depend on the exact root system in use.

\subsection{The complete case}
\label{S_into_cox_moon_comp}
Next, we focus on the most natural case of interest, that of complete
$\Phi$-graphs. 

The {\it complete $\Phi$-graph} $\K_\Phi$ 
is the $\Phi$-graph which includes all possible edges. 
These graphs extend in a natural way the notion of the complete
(type $A_{n-1}$) graph $K_n$. 
If $\Phi$ is of type $B_n$, $C_n$ or $D_n$ then 
all possible positive and negative edges
$E^\pm={[n]\choose 2}$ are included. 
Furthermore, if $\Phi$ is of type $B_n$ then $H=[n]$ and $L=\emptyset$; 
if $C_n$ then $H=\emptyset$ and $L=[n]$; 
and if $D_n$ 
then $H=L=\emptyset$. 

In the complete case $\G=\K_\Phi$, we provide 
three proofs of the Coxeter analogue of Moon's theorem.
To this end, we first note (in \cref{S_Wmaj}) 
that the connection with majorization extends to the 
complete Coxeter setting, allowing for the following succinct version of Moon's theorem
(\cref{T_cox_moon_1,T_cox_moon_2}) in these cases. 

Recall (see \cite{MOA11}) that for $x,y\in\RR^n$, 
we say that $x$ is {\it weakly sub-majorized} by $y$, and write
$x\preceq_w y$ if $\sum_{i=1}^k (x^\downarrow)_i\le \sum_{i=1}^k (y^\downarrow)_i$ 
for all $k\in[n]$, 
where $z^\downarrow$ denotes the non-increasing rearrangement of $z$. 
Note that, if $x\preceq_w y$ and moreover $\sum_ix_i=\sum_iy_i$, 
then $x$ is in fact {\it majorized} by $y$, written as  $x\preceq y$, as discussed above. 

\begin{theorem}
\label{T_cox_moon_complete}
Let $\Phi$ be a root system of type $B_n$, $C_n$ or $D_n$. 
Then $x \in \RR^n$ is a mean score sequence on  
the complete signed graph $\K_\Phi$ 
if and only if $|x|=(|x_1|,\ldots,|x_n|)$ is weakly sub-majorized
by the Weyl vector 
\[
\rho_\Phi=v_n+\delta_\Phi 1_n,
\]
 where
 \[
 \delta_\Phi=
 \begin{cases}
 1/2&\Phi=B_n\\
 1&\Phi=C_n\\
 0&\Phi=D_n, 
 \end{cases}
 \]  
 $1_n=(1,\ldots,1)\in \RR^n$, 
 and $v_n$ is as in \eqref{E_vn}.
\end{theorem}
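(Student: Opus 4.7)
The plan is to deduce this statement from \cref{T_cox_moon_1}, which already identifies the set of mean score sequences of random Coxeter tournaments on $K_\Phi$ with the Coxeter graphical zonotope $Z_{K_\Phi}$. It therefore suffices to prove that
\[
Z_{K_\Phi} = \{x \in \RR^n : |x| \preceq_w \rho_\Phi\}.
\]

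The first step is to recognize that for each $\Phi \in \{B_n, C_n, D_n\}$ the subset $S \subseteq \Phi^+$ defining $Z_{K_\Phi}$ via \eqref{E_ZG} exhausts the entire positive system. Indeed, $E^\pm = \binom{[n]}{2}$ provides the roots $e_i \pm e_j$ for $i<j$; in type $B_n$ the half-edges $H=[n]$ supply the short roots $e_i$; in type $C_n$ the loops $L=[n]$ supply the long roots $2e_i$; and in type $D_n$ no further roots are needed. Hence $Z_{K_\Phi} = \Pi_\Phi$, the full $\Phi$-permutahedron. A direct summation of the standard fundamental weights in each type then shows that $\rho = \sum_i \lambda_i$ equals, up to permutation, $v_n + \delta_\Phi 1_n = \rho_\Phi$.

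The second step is to compute the $W$-orbit of $\rho_\Phi$. For types $B_n$ and $C_n$, $W$ is the hyperoctahedral group, so the orbit is the set of all signed permutations of $\rho_\Phi$. For type $D_n$ only an even number of sign changes is permitted, but since $\rho_{D_n}$ has a zero entry, one can absorb any additional sign change into the flip of this zero coordinate; the orbit therefore still consists of all signed permutations. Consequently $\Pi_\Phi$ is the convex hull of all signed permutations of the nonnegative vector $\rho_\Phi$.

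The main obstacle, and final step, is the classical convex-geometric identity
\[
\operatorname{conv}\{(\epsilon_1 y_{\pi(1)}, \ldots, \epsilon_n y_{\pi(n)}) : \pi \in S_n,\ \epsilon \in \{\pm 1\}^n\} = \{x \in \RR^n : |x| \preceq_w y\}
\]
valid for any $y \in \RR^n$ with nonnegative entries. One inclusion is immediate: any signed permutation of $y$ satisfies $|x| \preceq_w y$ trivially, and the defining inequalities of weak sub-majorization, namely $\sum_{i\in I}\sigma_i x_i \leq \sum_{i=1}^{|I|}(y^\downarrow)_i$ over all $I\subseteq[n]$ and $\sigma \in \{\pm 1\}^I$, are linear in $x$, so the right-hand side is convex. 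For the reverse inclusion I would invoke the Mirsky--Thompson generalization of Birkhoff's theorem already cited in the introduction: any $x$ with $|x| \preceq_w y$ can be written as $Py$ for a signed doubly substochastic matrix $P$, and such a matrix decomposes as a convex combination of signed (partial) permutation matrices. Applying this decomposition to $y \geq 0$ expresses $x$ as a convex combination of signed permutations of $y$, completing the argument.
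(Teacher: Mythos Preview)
Your proof is correct and follows essentially the same route as the paper: reduce via \cref{T_cox_moon_1} to identifying $Z_{K_\Phi}=\Pi_\Phi$ with the set $\{x:|x|\preceq_w\rho_\Phi\}$, using that $\Pi_\Phi$ is the convex hull of the $W$-orbit of $\rho_\Phi$ (with the observation in type $D_n$ that the zero coordinate makes the orbit coincide with all signed permutations). The only minor difference is that the paper leaves the equivalence of $W$-majorization with weak sub-majorization as a ``direct calculation'' (implicitly via the hyperplane/submodular description, as in \cref{E_compCn}), whereas you supply it through the Mirsky--Thompson signed Birkhoff theorem together with the representation $x=Ay$ from \cite[Section~2.C.4]{MOA11}; this is precisely the machinery the paper deploys in its separate geometric proof in \cref{S_birk}.
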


\subsubsection{Geometric proof}
\label{S_geo}
Our first proof of \cref{T_cox_moon_complete} exploits a
connection between the Coxeter (signed) analogue 
\cite{Mir59a,Tho77}
of the Birkhoff polytope (the convex body of doubly stochastic matrices) and Coxeter tournaments,
and is inspired by one of the proofs of Moon's classical theorem
in the recent work by Aldous and the first author \cite{AK18}. 
Roughly speaking, the proof follows by describing a specific tournament 
$T_\phi$ corresponding  to each signed permutation $\phi$ 
which is an extreme point 
of the Coxeter permutahedron $\Pi_{\Phi}$
(see \cref{D_Phiperm} below). 
This description is intuitive, involving the interpretation of 
players $i$ with $\phi(i)$ positive/negative as those which 
are strong/weak. For instance, it type $C_n$, in the tournament
$T_\phi$, player $i$ wins its solitaire game if $\phi(i)>0$. 
Its competitive (resp.\ collaborative) games with a player $j$
are won if $\phi(i)>\phi(j)$ (resp.\ $\phi(i)+\phi(j)>0$).

\subsubsection{Probabilistic proof}
\label{S_prob}
In \cref{S_Strassen}, we give a probabilistic  proof
of \cref{T_cox_moon_complete}
by Strassen's coupling theorem \cite{Str65}.
The construction is similar in spirit to (but more involved than) the 
``football'' proof of Moon's classical theorem in the work of Aldous
and the first author \cite{AK18}.
Weak sub-majorization $x\preceq_w y$ is 
equivalent to inequality $\mu_x\preceq_{\rm inc} \mu_y$ 
with respect to the increasing stochastic order on probability measures, 
where $\mu_x$ and $\mu_y$
are uniform discrete measures on the multisets 
$\{x_1,\ldots,x_n\}$ and $\{y_1,\ldots,y_n\}$ (not to be confused
with the Coxeter submodular functions $\mu_\G$ above). 
For instance, in the case of $C_n$, Strassen's theorem 
gives sub-probability measures $\mu_i$ on $\{1,\ldots,n\}$
with means $\sum_j j\mu_i(j)=|x_i|$. 
We then extend these to  
probability measures $\nu_i$ on $\{\pm 1,\ldots,\pm n\}$
with means $x_i$, in such a way that a Coxeter tournament with mean
score sequence $x$ can be constructed for which in all games, the players $i$ 
involved (two if competitive or cooperative and one if solitaire) score an independent
number of points, distributed as $\nu_i$.

\subsubsection{Algorithmic proof}
\label{S_alg}
Our third and final proof 
of \cref{T_cox_moon_complete}
given in \cref{S_HH} is fully constructive, 
and can be viewed as a 
natural continuous 
Coxeter analogue of the classical 
Havel--Hakimi \cite{H55,H62} algorithm. 
This algorithmic proof has an intuitive description in terms of players 
seeking out potential
competitors/collaborators with respect to their relative weakness/strength
(as a certain greedy particle sliding procedure),
and is inspired by an unpublished proof of Moon's classical 
theorem by David Aldous \cite{DA}.

\subsection{Paired comparisons}\label{S_PairedComparisions}
In the classical (type $A_{n-1}$) setting, all irreducible mean
score sequences $x$ on the complete graph 
$K_n$ can be realized by the 
Bradley--Terry \cite{BT52} model (see, e.g., Theorem 1 in \cite{AK18}). 
By ``irreducible'' we mean that $x$ is {\it strictly} majorized by $v_n$, written
$x\prec v_n$, that is, all 
$\sum_{i=1}^k (x_\uparrow)_i> {k\choose 2}$ with equality only when $k=n$. 
The Bradley--Terry model, arising from 
the statistical theory of paired comparisons (see, e.g.,
Cattelan \cite{Cat12}) was in fact
introduced and studied in the much earlier work of Zermelo \cite{Zer29}
(motivated by the problem of ranking chess players
based on incomplete information).

In \cite{AK18} it is argued that Bradley--Terry is in some sense the 
Gaussian
analogue in the context of tournaments. 
Indeed, they both arise when maximizing 
Shannon \cite{Sha48} entropy. Moreover, just as $x\prec v_n$ if and only if
$x=Av_n$ for some positive definite doubly stochastic matrix $A$ 
(see 
Chao and Wong \cite{CW92} and 
Brualdi, Hwang and Pyo \cite{BWP97}), 
Gaussian densities are those associated with positive definite 
covariance matrices. 

We say that $x$ is {\it strictly weakly sub-majorized} by $y$, and write
 $x\prec_w y$, if for all $k\in[n]$, 
 we have that $\sum_{i=1}^k (x^\downarrow)_i< \sum_{i=1}^k (y^\downarrow)_i$.
 
 \begin{theorem}\label{T_BT}
Let $\Phi$ be a root system of type $B_n$, $C_n$ or $D_n$. 
Let $|x| \prec_w\rho_\Phi$ be an irreducible mean score sequence
on the complete signed graph $K_\Phi$. Then $x$ can be realized 
by a random Coxeter tournament of Bradley--Terry form, with  
$p_{ij}^\pm =\varphi(\lambda_i\pm \lambda_j)$ 
and also $p_i^h= \varphi(\lambda_i)$ (resp.\ $p_{i}^\ell= \varphi(\lambda_i)$)
if in type $B_n$ (resp.\ $C_n$), 
where 
$\varphi(u)=e^u/(1+e^u)$
is the standard logistic function and where 
$\lambda_i\in\RR$ is the ``strength'' of player $i$. 
\end{theorem}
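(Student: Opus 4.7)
The plan is to realize the Bradley--Terry parameterization as the gradient of a smooth, strictly convex potential on $\RR^n$ and to produce the required strengths as the unique critical point of this potential shifted by the linear functional $\langle x,\cdot\rangle$. Setting $\psi(u) = \log(1+e^u) - u/2 = \log(2\cosh(u/2))$, so that $\psi$ is even, strictly convex, and satisfies $\psi'(u) = \varphi(u) - 1/2 = \tfrac{1}{2}\tanh(u/2)$, I will take
\[
\Psi_\Phi(\lambda) \;=\; \sum_{i<j}\bigl[\psi(\lambda_i - \lambda_j) + \psi(\lambda_i + \lambda_j)\bigr] + \epsilon_\Phi \sum_{i=1}^n \psi(\lambda_i),
\]
with $\epsilon_\Phi = 1, 2, 0$ according to whether $\Phi$ is of type $B_n$, $C_n$, or $D_n$. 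Bearing in mind that loops contribute with weight $2$ in \eqref{E_xi}, a direct differentiation shows that $\nabla \Psi_\Phi(\lambda)$ agrees coordinatewise with the mean score sequence produced by the Bradley--Terry probabilities in the statement. So it suffices to find, for every irreducible $x$, some $\lambda \in \RR^n$ with $\nabla \Psi_\Phi(\lambda) = x$.

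Next I would check strict convexity of $\Psi_\Phi$. Its Hessian is a sum of rank-one positive semidefinite matrices $\psi''(\cdot)\,(e_i \pm e_j)(e_i \pm e_j)^T$ and, in types $B_n, C_n$, the diagonal contributions $\psi''(\cdot)\,e_ie_i^T$. Since $\psi''>0$ everywhere, a vector $v$ in the kernel must satisfy $v_i - v_j = v_i + v_j = 0$ for all $i<j$, forcing $v = 0$. Hence $\lambda \mapsto \Psi_\Phi(\lambda) - \langle x, \lambda\rangle$ is strictly convex and has at most one critical point.

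The main obstacle will be coercivity of this functional, and this is where the strict hypothesis $|x| \prec_w \rho_\Phi$ is used decisively. The elementary inequalities $\psi(u) \ge |u|/2$ and $|a-b| + |a+b| = 2\max(|a|,|b|)$ combine to give
\[
\Psi_\Phi(\lambda) \;\ge\; \sum_{i<j}\max(|\lambda_i|,|\lambda_j|) + \delta_\Phi \sum_{i=1}^n |\lambda_i| \;=\; \langle \rho_\Phi^\downarrow,\, |\lambda|^\downarrow\rangle,
\]
where $|\lambda|^\downarrow$ is the non-increasing rearrangement of $(|\lambda_1|,\ldots,|\lambda_n|)$ and the last equality follows from $\sum_{i<j}\max(|\lambda_i|,|\lambda_j|) = \sum_k (n-k)|\lambda|^\downarrow_k$ together with $\delta_\Phi = \epsilon_\Phi/2$. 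The rearrangement inequality gives $\langle x, \lambda\rangle \le \langle |x|^\downarrow, |\lambda|^\downarrow\rangle$. Setting $A_j = \sum_{k=1}^j(\rho_\Phi^\downarrow_k - |x|^\downarrow_k)$, strict weak sub-majorization gives $A_j > 0$ for every $j \in [n]$, and Abel summation combined with the telescoping identity $\sum_{j=1}^{n-1}(|\lambda|^\downarrow_j - |\lambda|^\downarrow_{j+1}) + |\lambda|^\downarrow_n = \|\lambda\|_\infty$ yields
\[
\Psi_\Phi(\lambda) - \langle x,\lambda\rangle \;\ge\; \bigl(\min_{j\in[n]} A_j\bigr)\,\|\lambda\|_\infty.
\]

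Combined with strict convexity, this coercivity produces a unique minimizer $\lambda^* \in \RR^n$ at which $\nabla \Psi_\Phi(\lambda^*) = x$, whose coordinates are the desired strengths. The delicate point is the coercivity step itself: weak sub-majorization alone yields only $A_j \ge 0$, which would allow the infimum to escape to infinity, so it is precisely the \emph{strict} inequality in the definition of irreducibility that forces a positive coefficient on $\|\lambda\|_\infty$ and keeps the minimizer in $\RR^n$.
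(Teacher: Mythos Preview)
Your proof is correct and complete. It differs in presentation from what the paper sketches, though the two are convex duals of one another.

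The paper omits the proof, saying only that one maximizes the entropy
\[
\sum_{ij}\bigl[p^-_{ij}\log p^-_{ij}+(1-p^-_{ij})\log(1-p^-_{ij})+p^+_{ij}\log p^+_{ij}+\cdots\bigr]+\cdots
\]
over tournaments subject to the mean-score constraints \eqref{E_xi}, and that the strengths $\lambda_i$ arise as the Lagrange multipliers, deferring details to \cite{AK18}. You instead work directly in the dual variables: your potential $\Psi_\Phi$ is built from $\psi(u)=\log(1+e^u)-u/2$, and $\log(1+e^u)$ is precisely the Legendre transform of the (negative) binary entropy. So minimizing $\Psi_\Phi(\lambda)-\langle x,\lambda\rangle$ is the Fenchel dual of the paper's constrained entropy maximization, and the critical point of one corresponds to the optimizer of the other.

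What your formulation buys is that the role of irreducibility becomes completely transparent: the Abel-summation estimate $\Psi_\Phi(\lambda)-\langle x,\lambda\rangle\ge(\min_j A_j)\,\|\lambda\|_\infty$ makes explicit that strict weak sub-majorization is exactly what forces the minimizer to stay in $\RR^n$. In the primal entropy picture this same fact shows up as the statement that the constrained maximum is attained at an interior point of the probability cube (so that finite multipliers exist), which is a step \cite{AK18} handles but the paper leaves implicit. Your argument is thus more self-contained for the present purpose.
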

 
\cref{T_BT} essentially follows by maximizing the entropy
of a Coxeter tournament, 
subject to the contraints \eqref{E_xi} that each player $i$
has mean score $x_i$. 
For instance, in type $C_n$, the entropy is 
\[
\sum_{ij}[p^-_{ij}\log (p^-_{ij})+(1-p^-_{ij})\log (1-p^-_{ij})
+p^+_{ij}\log (p^+_{ij})]
+\sum_i p_{i}^\ell\log(p_i^\ell).
\]
As it turns out, the strengths $\lambda_i$ are the Lagrange
multipliers in this optimization problem. 
Note that the $\lambda_i$ depend on the mean score sequence $x$ 
through the constraints \eqref{E_xi}. 

Hence the {\it closure} of the set of all mean score sequences 
of Coxeter random tournaments of Bradley--Terry form
is the set of {\it all} possible mean score sequences 
(that is, the 
Coxeter permutahedron $\Pi_{\Phi}$). 
We omit the proof, since the details are similar to 
the proof of Theorem 1 in \cite{AK18}.

Finally, let us note that the logistic function is not the only possible  
choice above. For instance, replacing $\varphi$ with the Gaussian
cumulative distribution function leads to the 
Thurstone--Mosteller model \cite{Thu27,Mos51}. 
See, e.g., the discussion following Theorem 2.7 in Joe \cite{Joe88}
for more details.

\subsection{Landau's theorem} 
\label{S_into_cox_lan}

Recall (see \cref{T_paper1}(1) above)
that, for classical (type $A_{n-1}$) graphs $G$,
all possible integer score sequences $s\in\ZZ^n$ can be 
realized deterministically. 
That is, $s \in \ZZ^n \cap Z_G$
if and only if there is a tournament
with all $p_{ij}\in\{0,1\}$ and  
score sequence $s$. 
In this section, 
we discuss an extension of Landau's theorem for Coxeter tournaments 
that holds when the 
$\Phi$-graph $\G$ is, in a certain sense, {\it balanced}. 

Before stating our results, let us 
discuss some of the subtleties 
involved with distinguishing between 
integer and deterministic score sequence
in the Coxeter setting. 
Perhaps one might expect a classification of 
score sequences of deterministic 
Coxeter tournaments on $\mathcal{G}$ in terms of the weight lattice points in $Z_{\mathcal{G}}$. 
The first issue with this idea is that $Z_{\mathcal{G}}$ is not a lattice polytope in the root lattice. 
However, this is easily remedied by considering a translation
(cf.\ \eqref{E_ZG}) 
\begin{equation}
\label{E_ZGt}
Z_{\mathcal{G}}^{\rm tr} =Z_\G+\rho_\G= \sum_{\alpha \in S} [0,\alpha]
\end{equation}
of $Z_\G$, where $\rho_\G$ is as in \eqref{E_rhoG} above. 
Then it follows directly that $x + \rho_{\mathcal{G}}$ is an integer lattice point of 
$Z_{\mathcal{G}}^{\rm tr}$ whenever $x$ is a deterministic score sequence on 
$\mathcal{G}$. However, even with this modification, 
the converse fails in general, as the following examples show.

\begin{example}\label{ex: failure of Landau's theorem}
Consider 
$\G$ with no solitaire games and  
a competitive and 
cooperative game between a pair of players. 
There are four deterministic tournaments 
obtained by setting each of $p_{12}^\pm$ to $0$ or $1$. 
After translation by $\rho=(0,1)$, the score sequences are
$(0,2)$, $(0,0)$, $(1,1)$ and $(-1,1)$. 
However, notice that $Z_{\mathcal{G}} + \rho$ also contains the lattice point $(0,1)$, 
corresponding to the random tournament with $p_{12}^\pm =1/2$. 

As another example, consider $\G$ consisting of a single solitaire loop. 
After translation by $\rho=(1)$, the two score sequences are $(0)$ and $(2)$.
However, the random tournament with $p_{11}=1/2$ has mean
score sequence $(1)$. 
\end{example}

As these examples suggest, loops and cycles with 
an odd number of positive edges can prevent 
Landau's theorem from extending. The case
of half-edges, as we will see, is more subtle. 

As in \cite{Zas81,Zas82}, we make the 
following definition. 

\begin{definition}
\label{D_balanced}
Let $\Phi$ be a root system of type $B_n$, $C_n$ or $D_n$. 
A $\Phi$-graph $\G$ is 
{\it balanced} if it contains no half-edges, loops
and all cycles have an even number of positive edges. 
\end{definition}

In particular, if $\G$ is balanced then 
$E^-\cap E^+=\emptyset$ are disjoint. 
(Also recall that, in this work, we call negative/positive edges what are
usually referred to as positive/negative edges in the literature.
As such, in the literature, ``balanced'' is usually defined to 
mean that the product of signs along any cycle is positive.)

Part (1) in our next result shows that Landau's theorem 
extends when $\G$ is balanced. In fact, in type $B_n$, 
it is possible to add half-edges. 
Part (2) is a partial converse, which shows that 
being balanced is necessary
when there
are no half-edges. 

\begin{theorem}
\label{T_CoxLan}
Let $\Phi$ be a root system of type $B_n$, $C_n$ or $D_n$. 
Let $\G$ be a $\Phi$-graph
and let $Z_\G^{\rm tr}$ be the translated Coxeter graphical zonotope of $\mathcal{G}$
as in \eqref{E_ZGt} above.
Let $\G'$ be the subgraph of $\G$ obtained by 
removing any half-edges. 
\begin{enumerate}
\item If $\G'$ is balanced then 
$s$ is a score sequence of a deterministic Coxeter tournament on $\G$ if and only if 
$s+\rho_\G \in \ZZ^n \cap Z_\G^{\rm tr}$.
\item
On the other hand, if $\G=\G'$ is unbalanced (that is, if $\G$
has no half-edges and at least one loop or cycle with an odd
number of positive edges), 
then there
are integer vectors $t\in \ZZ^n \cap Z_\G^{\rm tr}$ which can only be realized
randomly, that is, as $t=x+\rho_\G$ for some
mean score sequence $x$ of a random 
Coxeter tournament on $\G$. 
\end{enumerate}
\end{theorem}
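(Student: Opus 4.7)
The plan is to reduce both parts to a single structural question about the integer matrix $M_\G$ whose columns are the root vectors $\alpha \in S$ associated to the edges, loops, and half-edges of $\G$. By the Hoffman--Kruskal theorem, if $M_\G$ is totally unimodular (TU), then for every lattice point $t = s + \rho_\G \in \ZZ^n \cap Z_\G^{\rm tr}$ the fiber polytope $\{c \in [0,1]^S : M_\G c = t\}$ is integral, so some $\{0,1\}$-vertex $c$ exists; by \eqref{E_xi} this yields a deterministic Coxeter tournament on $\G$ with mean score sequence $s$. Conversely, failure of TU produces explicit lattice witnesses via $2$-torsion in $\ZZ^n/\operatorname{Im}(M_\G)$.

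For part (1), the forward direction is immediate from \eqref{E_xi}: the $p$-values of a deterministic tournament lie in $\{0,1\}$, so the corresponding $c_\alpha$'s lie in $\{0,1\}$ and $s + \rho_\G = \sum_\alpha c_\alpha \alpha \in \ZZ^n \cap Z_\G^{\rm tr}$. For the converse, I would invoke Harary's theorem: balance of $\G'$ produces a sign-switching $\sigma \colon V \to \{\pm 1\}$ under which every edge of $\G'$ becomes competitive, so left-multiplying $M_\G$ by $\operatorname{diag}(\sigma)$ converts the $\G'$ block into $\pm$ the oriented vertex-edge incidence matrix of an ordinary graph, which is classically TU. The half-edge columns $e_i$ (possible only in type $B_n$) are adjoined by viewing each half-edge at $i$ as an edge to an auxiliary sink vertex, extending to an ordinary oriented incidence matrix and then deleting the sink row (both operations preserve TU). Since TU is preserved under $\pm 1$ row scalings, $M_\G$ itself is TU, and the Hoffman--Kruskal argument applies.

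For part (2), I would exhibit explicit lattice witnesses via a mod-$2$ obstruction. In the absence of half-edges, each generator reduces modulo $2$ to either $0$ (loops) or $e_i + e_j$ (competitive or cooperative edges), so the $\mathbb{F}_2$-span of the generators is contained in the subspace $U \subseteq \mathbb{F}_2^n$ of vectors having even weight on each connected component of the underlying unsigned graph. When $\G$ has a loop at vertex $i$, the integer point $t = e_i$ lies in $Z_\G^{\rm tr}$ (take $c = 1/2$ on the loop and $0$ elsewhere) yet has odd weight on $i$'s component, so $t \notin U$; it therefore cannot be a $\{0,1\}$-combination, although it is the shifted mean score of the random tournament with $p_i^\ell = 1/2$ and all remaining probabilities chosen to contribute $0$ to the mean. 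When $\G$ has a cycle $v_1,\dots,v_k$ with an odd number of cooperative edges, an orientation computation shows that $\sum_i \pm \alpha_i = 2\chi_J$ for some nonempty $J \subseteq \{v_1,\dots,v_k\}$, so $t = \chi_J$ is an integer point of $Z_\G^{\rm tr}$ realized with $c = 1/2$ on the cycle edges, yet its $\mathbb{F}_2$-image lies outside $U$.

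The main obstacle will be the careful verification in part (2) that these witnesses cannot be rescued by incorporating edges outside the minimal unbalanced substructure: the mod-$2$ argument must account for the \emph{entire} $\G$-system, showing that $U$ absorbs all generators so that the parity mismatch is genuine. This reduces to verifying the component-wise even-weight characterization of $U$ given above, which is a restatement of the classical $\mathbb{F}_2$-representation theory of the graphical matroid.
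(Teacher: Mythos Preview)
Your approach is correct and reaches the same conclusions, but the execution differs from the paper's in both parts. For part~(1), both arguments rest on total unimodularity of the incidence matrix when $\G'$ is balanced; the paper cites this as a theorem of Heller--Tompkins and Hoffman--Gale (which already covers half-edges, so your auxiliary-vertex trick is not needed separately). You then invoke Hoffman--Kruskal directly on the fiber $\{c\in[0,1]^S:M_\G c=t\}$ to extract a $\{0,1\}$-vertex, whereas the paper takes a fine zonotopal subdivision of $Z_\G^{\rm tr}$ and uses Stanley's lattice-point formula to show that, under TU, each parallelepiped tile has no interior lattice points, forcing $t$ to be a vertex of its tile and hence a $\{0,1\}$-combination. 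Your route is shorter and more self-contained; the paper's is more geometric and ties into the zonotopal viewpoint used elsewhere in the work. For part~(2), both arguments are parity obstructions: the paper assigns probability $1/2$ on the unbalanced substructure $\C$ and $1$ elsewhere, then argues that the \emph{total} coordinate sum of the resulting mean score sequence cannot be matched deterministically (changes on $\C$ shift the sum by an odd amount, changes elsewhere by an even amount). Your version is the cleaner structural statement that every generator reduces mod~$2$ to a vector of even weight on each connected component, so any $\{0,1\}$-combination lies in the even-weight subspace $U$, while your witness does not.

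One small correction to your cycle witness: the claim ``$\sum_i\pm\alpha_i=2\chi_J$, so $t=\chi_J$ is realized with $c=1/2$ on the cycle edges'' does not parse, since choosing signs $\pm$ is incompatible with the uniform choice $c\equiv 1/2$. Simply drop the signs and take $t=\tfrac12\sum_i\alpha_i$; each cycle-vertex coordinate of $\sum_i\alpha_i$ is a sum of two $\pm1$'s and hence even, so $t\in\ZZ^n$ (though possibly with $-1$ entries, so not literally $\chi_J$), and $\sum_v t_v$ equals the number of cooperative edges on the cycle, which is odd. Hence $t\bmod 2\notin U$, which is exactly what your mod-$2$ argument needs.
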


For a  $\Phi$-graph $\G$ of 
type $B_n$, $C_n$ or $D_n$, let 
$S_\G$ denote the 
set of deterministic score sequences on $\G$. 
Then  
$S_\G+\rho_\G\subseteq \ZZ^n \cap Z_\G^{\rm tr}$. 
By \cref{T_CoxLan}, in types $C_n$ and $D_n$, 
we have that $S_\G+\rho_\G= \ZZ^n \cap Z_\G^{\rm tr}$
if and only if $\G$ is balanced (since these types have no half-edges,
and so $\G=\G'$). 
On the other hand, in type $B_n$, it is possible 
to have $S_\G+\rho_\G= \ZZ^n \cap Z_\G^{\rm tr}$
even when $\G'$ is unbalanced. 
For instance, consider the two-player Coxeter tournament 
involving a competitive, cooperative and 
half-edge solitaire game. 

The following questions remain open.  

\begin{problem}
\label{OP}
Let $\G$ be a $\Phi$-graph of type
$B_n$, $C_n$ or $D_n$.
\begin{enumerate}
\item Describe the set $S_\G$ 
of deterministic score sequences on $\G$. 
\item Determine when $S_\G+\rho_\G = \ZZ^n \cap Z_\G^{\rm tr}$. 
\end{enumerate}
\end{problem}

The answer to (2) is ``balanced'' in types
$C_n$ and $D_n$, but in type $B_n$ the situation 
is less clear. 
We note that 
(1) in the complete
case $\G=K_\Phi$
has been answered
by the first author, Mitchell and Przyby\l owski \cite{KMP25}.

\section{Background}
\label{S_back}

In this section, we briefly discuss some background information 
used in this work. In \cref{S_root_systems} we recall 
basic facts about root systems.  We refer the reader to, e.g., 
Humphreys \cite{Hum90} for proofs and more details.
In \cref{S_signed_graphs} we discuss Zaslavsky's \cite{Zas81,Zas82} 
theory of signed graphs, which are the natural setting in which to extend
the classical theory of graph tournaments. 
Finally, in \cref{S_cox_perm} we 
discuss the 
Coxeter generalized permutahedra developed recently by 
Ardila, Castillo, Eur and Postnikov \cite{ACEP}, 
which describe the geometry of Coxeter tournaments.

\subsection{Root systems}\label{S_root_systems}

Throughout, we let $V$ be a Euclidean vector space with inner product $\langle \cdot, \cdot\rangle$. 
We usually take $V$ to be $\RR^n$ with the standard orthonormal basis. 
Any vector $v \in V$ determines an automorphism $s_v$ of $V$ given by
 \[s_v(x) = x - 2\frac{\langle v, x\rangle}{ \langle v, v \rangle} v. \]

\begin{definition} 
\label{D_rootsystem}
A {\it (crystallographic) root system} 
is a finite collection of vectors $\Phi \subset V$ such that the following properties hold: 
\begin{enumerate}
\item $\operatorname{span}(\Phi) = V$, 
\item if $\alpha \in \Phi$ then the only other multiple of $\alpha$ in $\Phi$ is $-\alpha$, 
\item $\Phi$ is closed under all automorphisms $s_{\alpha}$, $\alpha \in \Phi$, and 
\item for all $\alpha, \beta \in \Phi$, we have that 
$2\frac{\langle \alpha,\beta \rangle}{\langle \alpha, \alpha\rangle}\in\ZZ$.
\end{enumerate}
Vectors $\alpha\in\Phi$ are called {\it roots}.
\end{definition}

The direct sum of two root systems 
$\Phi_1$ and $\Phi_2$ 
is defined as 
\[
\Phi_1 \oplus \Phi_2 = \{ (\alpha, 0) : \alpha \in \Phi_1\} \cup \{(0,\beta) : \beta \in \Phi_2 \}.
\] 
A root system is {\it irreducible} if it is not the direct sum of root systems.

\begin{theorem}[Killing \cite{Kil90}, Cartan \cite{Car96}] 
\label{T_KC}
The irreducible (crystallographic) root systems are classified (up to isomorphism) 
as the infinite families $A_{n-1}$, $B_n$, $C_n$ and $D_n$ 
and the exceptional types $E_6$, $E_7$, $E_8$, $F_4$ and $G_2$. 
\end{theorem}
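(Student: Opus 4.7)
The plan is to reduce the classification of irreducible root systems to the combinatorial classification of connected Dynkin diagrams. First, I would fix a generic hyperplane $H \subset V$ disjoint from $\Phi$, which splits $\Phi = \Phi^+ \sqcup \Phi^-$ into positive and negative roots relative to $H$. A positive root is called \emph{simple} if it cannot be written as a sum of two other positive roots. The standard arguments --- using that if $\alpha, \beta$ are simple then $\alpha - \beta \notin \Phi$, so $\langle \alpha, \beta \rangle \leq 0$ --- show that the set $\Delta = \{\alpha_1, \ldots, \alpha_n\}$ of simple roots is a basis for $V$, that every root is an integer combination of simple roots with coefficients of one sign, and that every root is $W$-conjugate to a simple one, where $W = \langle s_\alpha : \alpha \in \Phi \rangle$ is the Weyl group generated by the reflections from Definition~\ref{D_rootsystem}. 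In particular, $\Phi$ is recovered from $\Delta$ together with the Cartan integers $c_{ij} = 2\langle \alpha_i, \alpha_j \rangle / \langle \alpha_j, \alpha_j\rangle$, so it suffices to classify the possible pairs $(\Delta, (c_{ij}))$.

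Next, I would combine the integrality axiom (4) with the Cauchy--Schwarz inequality to show that, for distinct simple roots, $c_{ij} c_{ji} \in \{0, 1, 2, 3\}$, so that the angle between $\alpha_i$ and $\alpha_j$ lies in $\{\pi/2, 2\pi/3, 3\pi/4, 5\pi/6\}$ and the squared length ratio lies in $\{1, 2, 3\}$. I would then encode $\Delta$ by its \emph{Dynkin diagram}: a graph with one vertex per simple root, an edge of multiplicity $c_{ij} c_{ji}$ between $\alpha_i$ and $\alpha_j$, and an arrow pointing from the longer to the shorter root on multi-edges. One verifies that $\Phi$ is irreducible if and only if its Dynkin diagram is connected, and that two irreducible root systems are isomorphic if and only if they have the same Dynkin diagram.

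The heart of the proof, and the main obstacle, is then the combinatorial classification of all connected Dynkin diagrams whose associated symmetrized Cartan matrix is positive definite. I would carry this out by excluding forbidden subdiagrams: an explicit construction of vectors of non-positive norm rules out diagrams containing a cycle, a vertex of valence $\geq 4$, two distinct branch points, or a triple edge attached to any further vertex. A careful analysis of a single trivalent vertex (the ``three arms'' bound $1/p + 1/q + 1/r > 1$) then restricts the branch lengths, leaving only the possibilities $A_{n-1}$, $B_n$, $C_n$, $D_n$ (where $B_n$ and $C_n$ are distinguished by the direction of the unique double edge) and the five exceptional diagrams $E_6$, $E_7$, $E_8$, $F_4$, $G_2$.

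Finally, I would verify that each of these diagrams is realized by an explicit root system, so that the list is sharp. The standard coordinate models in $\RR^n$ or a hyperplane thereof suffice: $A_{n-1} = \{e_i - e_j\}$ in $\{\sum x_i = 0\}$, $B_n = \{\pm e_i,\, \pm e_i \pm e_j\}$, $C_n = \{\pm 2 e_i,\, \pm e_i \pm e_j\}$, $D_n = \{\pm e_i \pm e_j\}$, together with the classical descriptions of $E_6, E_7, E_8, F_4, G_2$. Checking axioms (1)--(4) of Definition~\ref{D_rootsystem} for each model is routine and completes the classification.
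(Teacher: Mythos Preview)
Your outline is the standard Cartan--Killing classification argument via simple roots and Dynkin diagrams, and it is correct as a high-level sketch. However, the paper does not prove this theorem: it is stated in \cref{S_root_systems} as a cited background result (attributed to Killing \cite{Kil90} and Cartan \cite{Car96}), with the reader referred to Humphreys \cite{humphreys_1990} for proofs and details. So there is nothing to compare your argument against in the paper itself; the approach you have written is essentially the one found in the reference the paper cites.
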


\begin{example}
\label{E_infinitefamilies}
 The infinite families are:
\begin{itemize}
	\item $A_{n-1} = \{ e_i - e_j : i \not = j \in [n]\}$,
	\item $B_n = \{\pm e_i \pm e_j : i \not = j \in [n]\} \cup \{\pm e_i : i \in [n]\}$,
	\item $C_n = \{\pm e_i \pm e_j : i \not = j \in [n]\} \cup \{\pm 2e_i : i \in [n]\}$,
	\item $D_n = \{\pm e_i \pm e_j : i \not = j \in [n]\}$, 
\end{itemize}
where the $e_i$ form the standard orthonormal
basis of $\RR^n$. One difficulty arises in the definition of the root system $A_{n-1}$. 
Since all of the vectors of $A_{n-1}$ lie in the subspace 
$\RR_0^n = \{x\in\RR^n : \sum_{i} x_i = 0\}$, 
we must choose $V$ to be this subspace
in order for condition (1) 
in \cref{D_rootsystem} to hold. 
\end{example}

\begin{definition}\label{def: Weyl Group}
The {\it Weyl group} $W$ of $\Phi$ is the group generated 
by the reflections $\{s_{\alpha}:\alpha \in \Phi\}$.
\end{definition}

\begin{example} 
\label{E_Weyl}
Examples of Weyl groups are:
\begin{itemize}
	\item In type $A_{n-1}$, the Weyl group is isomorphic to $S_n$.
	\item In types $B_n$ and $C_n$, 
	the Weyl groups are isomorphic to the group of signed permutations $S_n^{\pm}$ of $[n]$. 
	Recall that 
	elements of $S_n^{\pm}$ are bijections of $[\pm n]$ 
	such that $\phi(-i) = -\phi(i)$.
	\item In type $D_n$, the Weyl group is isomorphic to a subgroup of $S_n^{\pm}$ 
	consisting of the signed permutations such that $| \{ i \in [n]: \phi(i) < 0\} |$ is even.
\end{itemize}
\end{example}

\begin{definition}\label{def: positive systems}
Let $\Phi \subset V$ be a root system. 
A {\it positive system} $\Phi$ is a subset of $\Phi^+\subset\Phi$ 
with the property that there exists a linear functional $h \in V^*$ 
(where $V^*$ is the dual space of $V$) 
such that 
$h(\alpha)\neq0$ for all $\alpha\in\Phi$ and 
$h(\alpha) > 0$ for all $\alpha \in \Phi^+$.
\end{definition}

For any given root system, all choices of positive systems are equivalent 
up to the action of the Weyl group. 
For this reason, it usually suffices to consider one choice of positive system.

\begin{example} \label{ex: choice of positive system}
\label{E_possystems}
We will use the following choices of $\Phi^+$ for the 
root systems in \cref{E_infinitefamilies} are 
as follows:
\begin{itemize}
	\item $A_{n-1}^+ = \{e_i - e_j : i > j \in [n]\}$,
	\item $B_n^+ = \{e_i \pm e_j : i > j \in [n]\} \cup \{e_i : i \in [n]\}$,  
	\item $C_n^+ = \{e_i \pm e_j : i > j \in [n]\} \cup \{2e_i : i \in [n]\}$,  
	\item $D_n^+ = \{e_i \pm e_j : i > j \in [n]\}$.
\end{itemize}
\end{example}

\begin{definition} Let $\Phi \subset V$ be a root system and 
$\Phi^+\subset \Phi$ a positive system. 
The {\it simple system} $\Delta$ of $\Phi^+$ is the minimal collection of vectors 
such that every $ \alpha \in \Phi^+$ is a positive linear combination of vectors in $\Delta$. 
A {\it simple system} 
of $\Phi$ is a subset of roots which is a simple system of some positive system $\Phi^+\subset \Phi$.
\end{definition}

As with positive systems, all simple systems of $\Phi$ 
are equivalent up to the action of the Weyl group.

\begin{definition} Let $\Phi\subset V$ be a root system and 
$\Delta = \{\alpha_1, \ldots, \alpha_n\}$ a simple system of $\Phi$. 
The {\it fundamental weights} of $\Phi$ associated 
with  $\Delta$ are the elements $\lambda_1, \ldots \lambda_n$ of (the dual space) $V^*$ defined by
 \[ \langle \lambda_i, \alpha_j^{\vee} \rangle = 
\1_{i=j},
 \]
where $\alpha_j^{\vee} = \frac{2}{\langle \alpha_j,\alpha_j \rangle} \alpha_j$ 
is the {\it coroot} of $\alpha_j$. 
The {\it weight lattice} of $\Phi$ is the lattice generated by integer combinations of the fundamental weights.
\end{definition}

\begin{example}
\label{E_fundweights}
Once we identify $(\RR^n)^*$ with $\RR^n$, the fundamental weights of the infinite families are:
\begin{itemize}
\item $\overline{e_1}, \overline{e_1} + \overline{e_2}, \ldots, \overline{e_1} + \cdots + \overline{e_{n-1}}$ in $A_{n-1}$,
\item	$e_1, e_1 + e_2, \ldots, e_1 + \cdots + e_{n-1}, (e_1 + \cdots + e_n)/2$ in $B_n$,
\item $e_1, e_1 + e_2, \ldots, e_1 + \cdots + e_{n-1},e_1 + \cdots + e_n$ in $C_n$,
\item	$e_1, e_1 + e_2, \ldots, e_1 + \cdots + e_{n-2},
	(e_1 + \cdots + e_{n-1} \pm e_n)/2$ in $D_n$.
\end{itemize}
For type $A_{n-1}$, 
the $\overline{e_i}$ are the representatives of $e_i$ in the quotient 
$\RR^n/\RR 1_n$ where $1_n = (1,\ldots,1) \in \RR^n$.
Notice that the roots of type $A_{n-1}$ all lie in the subspace of $\RR^n$ where the coordinates sum to $0$. 
Hence, the span is not full dimensional. Likewise, the weights live in the dual space to $V$, 
where $V$ is the 
ambient space of the root system. In the type $A_{n-1}$ case, the dual is $\RR^n/\RR1_n$.

\end{example}

\begin{definition} 
Let $\Phi$ be a root system with fundamental weights $\lambda_1, \ldots, \lambda_n$. 
Let $\L$ as in \eqref{E_calL} above
denote the set of {\it fundamental weight conjugates}.
\end{definition}

\begin{definition}
\label{D_admissible}
We say that a subset $S \subseteq [\pm n]$ is {\it admissible} if $\{i, -i\}\not\subseteq S$ for all $i \in [n]$. 
For such a set, we let $S_+=S\cap[n]$ and 
$S_-=S\cap[-n]$. Note that $S_+\cap (-S_-)=\emptyset$.
\end{definition}

\begin{example} 
Continuing with \cref{E_fundweights},  
in these cases $\mathcal{L}$ can be understood in terms of 
certain types of subsets. For any signed subset $S \subseteq [\pm n]$ let 
\[
e_S = \sum_{i \in S_+}e_i - \sum_{-i \in S_-}e_i.
\]
\begin{itemize}
	\item For type $A_{n-1}$, the fundamental weight conjugates 
	are in bijection with proper, non-empty subsets $\emptyset\neq S\subsetneq [n]$ as follows: 	 
	\[ S \mapsto \sum_{i \in S} \overline{e_i}.\]
	 \item For type $B_n$, the fundamental weight conjugates 
	 are in bijection with admissible subsets $S\subseteq[\pm n]$
	 as follows: 
	  \[ S \mapsto 
	\begin{cases}
	e_S&|S|<n\\
	\frac{1}{2}e_S&|S|=n.
	\end{cases}  	 
	   \]
	\item For type $C_n$, the fundamental weight conjugates 
	are in bijection with admissible subsets $S\subseteq[\pm n]$
	 as follows: 
	  \[ S \mapsto e_S.\]
	 \item For type $D_n$, the fundamental weight conjugates 
	 are in bijection 
	 with admissible subsets $S\subseteq[\pm n]$ with $|S| \not= n-1$ as follows:
	 \[ S \mapsto \begin{cases}
	 	e_S & \text{$|S| \le n-2$} \\
	 	\frac{1}{2}e_S & \text{$|S| = n$}.
	 \end{cases}\]
	\end{itemize}
\end{example}

\subsection{Signed graphs}\label{S_signed_graphs}
Zaslavsky's theory of signed graphs
\cite{Zas81,Zas82} extends the classical theory of graphs
to the Coxeter setting.

\begin{definition}\label{def: signed graphs} 
A (simple) {\it signed graph} $\G$ on a vertex set $[n]$ is a tuple $\G=([n],E^-, E^+, H,L)$, 
where 
\begin{itemize}
\item $E^\pm\subseteq {[n]\choose 2}$ are sets of {\it positive} and {\it negative} 
edges, 
\item $H\subseteq [n]$ is a set of  {\it half-edges}, and 
\item $L\subseteq [n]$ is a set of  {\it loops}.
\end{itemize}
\end{definition}

We will be interested in particular bijections between 
signed graphs and positive roots in a root system.

\begin{definition}\label{D_phi_graph}
 Let $\Phi$ be a root system of type $A_{n-1}$, $B_n$, $C_n$ or $D_n$ 
 with the standard choice of positive systems in  \cref{ex: choice of positive system}. 
If a {\it $\Phi$-signed graph} $\G$ is of type $B_n$ it has no loops;
if of type $C_n$ it has no half-edges; and if of type $D_n$
it has neither half-edges nor loops. 
If $\G$ is of type $A_{n-1}$ then it has no 
positive edges, half-edges nor loops, and so is 
(in bijection with) 
a simple graph.

For a $\Phi$-signed graph $\mathcal{G}$, 
let $\Gamma$ denote the bijection between the edges 
\[
E(\G)=E^-\cup E^+\cup H\cup L
\]
and its associated subset of the 
positive system $\Phi^+\subset\Phi$
given by: 
\begin{itemize}
\item in type $A_{n-1}$,
\[
\Gamma(\G) 
= \{e_i - e_j : ij \in E^-, i>j\}, 
\]
\item in type $B_n$,
\[
\Gamma(\mathcal{G}) 
=\{e_i \pm e_j : ij \in E^\pm,i>j\}
\cup \{e_i : i \in H\},
\]
\item in type $C_n$,
\[
\Gamma(\mathcal{G}) 
=\{e_i \pm e_j : ij \in E^\pm,i>j\}
\cup \{2e_i : i \in L\}, 
\]
\item in type $D_n$,
\[
\Gamma(\mathcal{G}) 
=\{e_i \pm e_j : ij \in E^\pm,i>j\}.
\]
\end{itemize}
\end{definition}

We note here that, although the $\Phi$-graphs of the root systems
of types $B_n$ and  $C_n$ are in bijection, 
the map $\Gamma$ is different for these objects, since they are subsets of different sets 
of vectors. This difference will further manifest itself in the theory of 
tournaments on these signed graphs.

\begin{definition}
\label{D_complete_signed_graphs}
The {\it complete $\Phi$-graph} $\K_\Phi$ is the $\Phi$-graph where $\G = \Phi^+$. 
\end{definition}

In particular, note that $\K_{A_{n-1}}=K_n$ is the usual complete graph.

\subsection{Coxeter generalized permutahedra}
\label{S_cox_perm}
As discussed in \cref{S_into_cox_perm}  above, the notion of 
generalized permutahedra \cite{Pos09} has recently been extended to 
other root systems \cite{ACEP}. 

\begin{definition}\label{def: Coxeter Generalized Permutahedra}
Let $\Phi \subset V$ be a root system. A 
{\it $\Phi$-generalized permutahedron} 
is a polytope whose edge directions are parallel to roots in $\Phi$. 
\end{definition}

One of the main examples of Coxeter generalized permutahedra are the Coxeter permutahedra.

\begin{definition} 
\label{D_Phiperm}
Let $\Phi$ be a root system with a positive system $\Phi^+$. 
Then the corresponding {\it $\Phi$-permutahedron} is the Minkowski sum
 \[\Pi_{\Phi} = \sum_{\alpha \in \Phi^+} [-\alpha/2, \alpha/2]. \]
Equivalently, the $\Phi$-permutahedron is the polytope
 \[ 
 \Pi_\Phi = \operatorname{conv}\{ w\cdot \rho : w \in W  \},
 \]
where 
\[
\rho = \sum_{\alpha \in \Phi^+} \alpha/2 =  
\sum_{i=1}^n \lambda_i
\]
 is the sum of the fundamental weights
 (the {\it Weyl vector}). 
\end{definition}

Notice that in type $A_{n-1}$ the $\Phi$-permutahedron is 
equal (up to a translation) to the permutahedron $\Pi_{n-1}$ defined above in 
\eqref{E_perm_conv}--\eqref{E_perm_hyp}.

The relevant examples of a Coxeter generalized permutahedra for this paper 
are the polytopes associated to the 
$\Phi$-graphs first studied by Zaslavsky in the context of signed graphs.

\begin{definition}\label{def: Coxeter Graphic Zonotopes}
Let $\G$ be a $\Phi$-graph. 
The {\it Coxeter graphical zonotope} of $\G$ is the 
$\Phi$-generalized permutahedron $Z_\G$ given by the Minkowski sum
 \[
 Z_\G = \sum_{ \alpha \in \Gamma(\G)} [-\alpha/2, \alpha/2], 
 \]
 recalling (see \cref{D_phi_graph}) that $\Gamma$ denotes the bijection from 
 $\G$ to the positive roots of $\Phi$.
\end{definition}

Notice that the Coxeter graphical zonotope of the complete $\Phi$-graph 
$K_\Phi$ 
is the $\Phi$-permutahedron $\Pi_{\Phi}$.

An important aspect of Coxeter generalized permutahedra is that 
their hyperplane descriptions are given in terms of 
submodular functions.

\begin{definition}\label{def: Coxeter Arrangement}
For a root $\alpha \in \Phi$, let 
\[
H_{\alpha} = \{x \in V : \langle \alpha, x \rangle = 0\}
\] 
be the hyperplane defined by $\alpha$. 
The collection of hyperplanes $H_{\alpha}$ is called the {\it Coxeter arrangement}. 
It defines a simplicial fan $\Sigma_{\Phi}$. 
\end{definition}

The rays of the fan $\Sigma_{\Phi}$ are generated by the 
fundamental weight conjugates 
$\L$ as in \eqref{E_calL} above. 
Any function on these generators defines a function on the cones of 
$\Sigma_{\Phi}$ by extending it linearly on each cone. 
This gives a piecewise linear function.

\begin{definition}\label{def: submodular function} 
Let $\Phi$ be a root system and $\mathcal{L}$ denote the set of fundamental weight conjugates. 
A {\it $\Phi$-submodular function} is a function $h: \mathcal{L} \to \RR$ such that
 \[ 
 h(\lambda) + h(\lambda') \ge h(\lambda + \lambda'), 
 \]
where we consider $h$ as a piecewise linear function on the cones.
\end{definition}

This notion of submodularity can be traced back to 
Kamnitzer \cite{Kam10}; see Proposition 2.2 and the proof of Lemma A.5
therein. 

In type $A_{n-1}$ the functions $h$ corresponds to usual 
submodular functions $f$ on $[n]$ such that $f([n]) = 0$. 
On the other hand, in types $B_n$ and $C_n$ they 
correspond to bisubmodular functions,  and in type $D_n$ to a type called 
disubmodular. See Section 5.2 in \cite{ACEP} for more information.

The most important consequence for us is that 
the Coxeter submodular function $h$ 
gives a hyperplane description of its corresponding 
Coxeter generalized permutahedron.

\begin{theorem}[\hspace{1sp}\cite{ACEP}, Section 5.1]
\label{thm:submodular generalized permutahedra} 
If $h$ is a $\Phi$-submodular function, then the polytope 
\[P_h 
= \{x \in \RR^n :
\langle \lambda, x\rangle \le h(\lambda)
\mbox{ for all } \lambda \in \mathcal{L}\},
\] 
where $\L$ as in \eqref{E_calL}
is the set of fundamental weight conjugates, 
is a $\Phi$-generalized permutahedron. 
On the other hand, if $P$ is a $\Phi$-generalized permutahedron, then 
\[
h_{P}(\lambda) = \operatorname{max}_{x \in P}\{\langle \lambda, x\rangle \}
\] 
is a $\Phi$-submodular function.
Furthermore, the assignments 
$h \mapsto P_h$ and $P \mapsto h_{P}$ are inverses and thus bijections.
\end{theorem}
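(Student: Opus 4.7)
The plan is to recognize $h_P$ as the classical support function of $P$ and to reformulate the notion of $\Phi$-generalized permutahedron in purely fan-theoretic language, so that the theorem reduces to the well-known correspondence between piecewise linear convex functions on a complete simplicial fan and polytopes whose normal fan coarsens that fan. Support-function theory immediately gives that $h_P$ is positively homogeneous and convex, that $P = \{x : \langle \lambda,x\rangle \leq h_P(\lambda) \text{ for all } \lambda \in V\}$, and that $P$ is determined by the restriction of $h_P$ to any spanning set of directions — in particular to the rays $\mathcal{L}$ of $\Sigma_\Phi$.

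First I would prove the fan-theoretic characterization: a polytope $P \subset V$ is a $\Phi$-generalized permutahedron in the sense of \cref{def: Coxeter Generalized Permutahedra} if and only if the normal fan $\Sigma_P$ is a coarsening of the Coxeter arrangement fan $\Sigma_\Phi$. Indeed, each edge of $P$ is orthogonal to a codimension-one cone (wall) of $\Sigma_P$, so edges of $P$ are parallel to roots exactly when every wall of $\Sigma_P$ lies in a Coxeter hyperplane $H_\alpha$, which is precisely the coarsening condition. Equivalently, $h_P$ is linear on each maximal cone of $\Sigma_\Phi$, i.e., $h_P$ is exactly the piecewise linear extension of its values on $\mathcal{L}$.

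Given this, I would establish the classical lemma that for the simplicial fan $\Sigma_\Phi$, a piecewise linear function $h$ (linear on each maximal cone) is globally convex if and only if it satisfies the local inequality $h(\lambda) + h(\lambda') \geq h(\lambda + \lambda')$ for every pair of rays $\lambda, \lambda' \in \mathcal{L}$ spanning a two-dimensional cone of $\Sigma_\Phi$ — and by the piecewise linear extension, for all pairs in $\mathcal{L}$. This is a standard fact, proved by checking the convexity inequality across each pair of adjacent maximal cones and then propagating to arbitrary pairs by a chamber-path induction. With these ingredients the theorem assembles cleanly: if $P$ is a $\Phi$-generalized permutahedron then $\Sigma_P$ coarsens $\Sigma_\Phi$, so $h_P$ is piecewise linear on $\Sigma_\Phi$, and convexity of support functions gives $\Phi$-submodularity. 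Conversely, if $h$ is $\Phi$-submodular, the local-to-global lemma makes its piecewise linear extension convex, hence equal to the support function of some nonempty compact convex set $P_h$; linearity of $h$ on each maximal cone of $\Sigma_\Phi$ forces $\Sigma_{P_h}$ to coarsen $\Sigma_\Phi$, so $P_h$ is a $\Phi$-generalized permutahedron. The bijectivity $P \leftrightarrow h$ is then the standard support-function duality: $h_{P_h} = h$ because both are the piecewise linear extensions of the same values on $\mathcal{L}$, and $P_{h_P} = P$ because a convex body equals the intersection of its supporting half-spaces.

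The main obstacle is the local-to-global convexity lemma for piecewise linear functions on $\Sigma_\Phi$. The submodularity hypothesis is phrased only for pairs $\lambda, \lambda' \in \mathcal{L}$, but convexity of a support function must be checked for all pairs of points in $V$. The argument reduces to the case of two rays that together span a two-dimensional cone of $\Sigma_\Phi$ (adjacent chambers case), and one then chains such inequalities along a path of adjacent chambers connecting arbitrary pairs; the simpliciality of $\Sigma_\Phi$ makes this chaining work cleanly. Boundedness of $P_h$ also requires a brief argument: since $\mathcal{L}$ spans $V$ (the Coxeter arrangement is essential), the finitely many linear inequalities $\langle \lambda, x\rangle \leq h(\lambda)$ with $\lambda$ ranging over $\mathcal{L}$ already cut out a bounded region. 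Once these steps are in place, the remainder is formal from \cref{def: submodular function,def: Coxeter Generalized Permutahedra,def: Coxeter Arrangement}.
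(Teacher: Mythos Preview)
The paper does not prove this theorem; it is quoted as background from \cite{ACEP}, Section 5.1, and no proof appears in the present paper. So there is no ``paper's own proof'' to compare against.

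That said, your outline is the standard and correct route, and it is essentially how \cite{ACEP} proceeds: reinterpret $\Phi$-generalized permutahedra as polytopes whose normal fan coarsens the Coxeter fan $\Sigma_\Phi$, identify $h_P$ with the support function restricted to the rays $\mathcal{L}$, and invoke the equivalence between global convexity of a piecewise linear function on a complete simplicial fan and the ``wall-crossing'' submodularity inequalities. Your identification of the local-to-global convexity step as the main technical point is accurate; in \cite{ACEP} this is handled via the standard fact that a piecewise linear function on a complete simplicial fan is convex if and only if it is convex across every wall, which reduces to the inequality $h(\lambda)+h(\lambda')\ge h(\lambda+\lambda')$ for adjacent rays. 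One small caution: the definition of $\Phi$-submodular in \cref{def: submodular function} requires the inequality for \emph{all} $\lambda,\lambda'\in\mathcal L$ (with $h$ extended piecewise linearly), not only adjacent ones, so you do not actually need the chamber-path chaining to establish convexity---it follows directly once you know that convexity of a piecewise linear function is equivalent to subadditivity on rays. The boundedness remark is fine since $\mathcal L$ contains $\pm$ each fundamental weight direction.
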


\begin{example} 
\label{E_compCn}
For instance, 
the submodular function $h$ associated to the type $C_n$ permutahedron 
$\Pi_{C_n}$
is the function given by
 \[h( w\cdot \lambda_k) = n + (n-1) + \cdots + (n-k + 1). \]
In order words, $x \in \Pi_{C_n}$ if and only if for any $k$ distinct indices we have
 \begin{equation}\label{E_CnUB}
|x_{i_1}| + |x_{i_2}| +\cdots+ |x_{i_k}| 
\le n + (n-1) + \cdots + (n-k + 1),
 \end{equation}
that is, if and only if $|x|$ is weakly sub-majorized by 
\[
\rho_{C_n}=v_n+1_n=(1,2,\ldots,n),
\]
as in \cref{T_cox_moon_complete}.
Note that the right hand side in \eqref{E_CnUB} is equal to 
\[
\frac{k(n-k)}{2}
+\left[\frac{k(n-k)}{2}+{k\choose 2}\right]
+k,
\]
which is the maximum number of points that any given set 
$S\subset[n]$ of $k$ players can win
in total. Indeed, there are $n-k$ competitive games (worth $1/2$ point) 
with exactly one player in $S$.
Points from competitive games between players in $S$ cancel. 
There are $n-k$
cooperative games (worth $1/2$ point) with exactly one player in $S$, 
and ${k\choose2}$ cooperative games
(worth $2\cdot (1/2)=1$ point) with both players in $S$. 
Finally, there are $k$ solitaire games (worth $1$ point) 
in $S$. 
\end{example}

\section{Coxeter tournaments}
\label{S_proofs}

In this section, we extend the theory of graph tournaments
to the setting of signed graphs. In \cref{S_moon}, we prove an analogue of 
Moon's theorem, which extends the connection between 
graphical zonotopes and tournaments established
by \cref{T_paper1} in the classical setting. 
In \cref{S_landau}, we discuss the issues with extending
Landau's theorem to the Coxeter setting. 

As discussed in \cref{S_into_cox_tour}, we 
proceed as follows. 

\begin{definition} 
\label{E_signed_random_tournament}
Let $\Phi$ be a root system of type $B_n$, $C_n$ or $D_n$. 
Let $\mathcal{G}$ be a $\Phi$-graph. 
A {\it random $\Phi$-tournament} $T$ on $\G$ is a collection of probabilities 
\begin{itemize}
\item $p^-_{ij}=1-p^-_{ji}$ for each negative edge 
$ij \in E^-$,
\item $p^+_{ij}=p^+_{ji}$ for each positive edge 
$ij \in E^+$,  
\item $p_i^h$ for each half-edge $i \in H$ (only in type $B_n$),
\item $p_i^\ell$ for each loop $i \in L$ (only in type $C_n$). 
\end{itemize}
The corresponding {\it mean score sequence} of $T$ is the vector 
$x\in \RR^n$ with coordinates  (cf.\ \eqref{E_xiA} above)
\begin{equation}\label{E_xi}
x_i
=\sum_{ij\in E^-} (p_{ij}^- -1/2) 
+\sum_{ij\in E^+} (p_{ij}^+ -1/2)
+
\begin{cases}
p_i^h-1/2&i\in H\\
2(p_i^\ell-1/2)&i\in L.
\end{cases}
\end{equation}
\end{definition}

Intuitively, $\Phi$-tournaments have three different type of games.
In types $B_n$, $C_n$ and $D_n$ each negative edge represents a competitive
game, in which one player wins and the other loses $1/2$ point. 
On the other hand, each positive edge represents a cooperative 
game in both players win or lose $1/2$ point. 
In types $B_n$ and $C_n$ there are also solitaire games
in which a player wins or loses,  $1/2$ point in type $B_n$ 
and $1$ point in type $C_n$. 

A direct calculation gives the following interpretation in terms of the 
underlying root system.
Recall that $\Gamma$ is the bijection between 
$E(\G)$ and its associated subset of 
the positive system $\Phi^+$. 

\begin{proposition} 
\label{E_x}
Let $\Phi$ be a root system of type $B_n$, $C_n$ or $D_n$. 
Let $\G$ be a $\Phi$-graph. 
Then the mean score sequence $x$ of $T$ is given by
\[
x
=\sum_{e\in E(\G)}(p_e-1/2)\Gamma(e). 
\]
\end{proposition}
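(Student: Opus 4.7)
The plan is to verify the identity coordinate-by-coordinate by unpacking the definition of $\Gamma$ on each of the four types of edges and comparing with the four corresponding terms in \cref{E_signed_random_tournament}. Essentially, this proposition is a bookkeeping statement, rewriting the piecewise definition of $x_i$ as a single Minkowski-style sum of positive roots weighted by the centered probabilities $p_e - 1/2$.

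Concretely, I would fix $i \in [n]$ and compute the $i$-th coordinate of $\sum_e (p_e - 1/2)\Gamma(e)$, splitting the sum according to the four edge classes. For a negative edge $jk \in E^-$, recall $\Gamma(jk) = e_{\max(j,k)} - e_{\min(j,k)}$, so its contribution to the $i$-th coordinate is nonzero only if $i \in \{j,k\}$. If $i$ is the larger index it contributes $+(p_{ik}^- - 1/2)$ (with $k$ the smaller), and if $i$ is the smaller it contributes $-(p_{jk}^- - 1/2) = (p_{ij}^- - 1/2)$ via the antisymmetry $p_{ij}^- = 1 - p_{ji}^-$; in either case we get $(p_{ij}^- - 1/2)$ summed over negative edges incident to $i$, matching the first term of $x_i$. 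For a positive edge $jk \in E^+$, $\Gamma(jk) = e_{\max(j,k)} + e_{\min(j,k)}$ is symmetric in its endpoints, and since $p^+$ is symmetric as well, the $i$-th coordinate picks up exactly $(p_{ij}^+ - 1/2)$ for each positive edge $ij$ incident to $i$, matching the second term.

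The remaining two cases are immediate from the definitions: in type $B_n$ a half-edge at $i$ has $\Gamma(i) = e_i$, contributing $(p_i^h - 1/2)$ to the $i$-th coordinate, while in type $C_n$ a loop at $i$ has $\Gamma(i) = 2e_i$, contributing $2(p_i^\ell - 1/2)$. These match the piecewise part of $x_i$ exactly in the two respective types, and in type $D_n$ both $H$ and $L$ are empty so no such terms appear.

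There is no real obstacle here; the only subtle point is the asymmetry convention for negative edges ($\Gamma$ requires $i > j$ while $x_i$ is defined symmetrically over all incident edges), which is resolved by the identity $p_{ij}^- + p_{ji}^- = 1$. Summing the four contributions recovers the formula for $x_i$ in \cref{E_signed_random_tournament}, which completes the proof.
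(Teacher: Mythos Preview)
Your proposal is correct and is precisely the ``direct calculation'' that the paper invokes but omits; the paper states just before the proposition that ``A direct calculation gives the following interpretation in terms of the underlying root system'' and provides no further proof. Your coordinate-by-coordinate check, including the handling of the antisymmetry convention for negative edges, is exactly what is needed.
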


\subsection{Extending Moon's theorem}
\label{S_moon}
One of the classical results in the theory of tournaments is Moon's theorem \cite{M63}, 
which classifies the set of $x\in\RR^n$ 
which are mean score sequences of tournaments on the complete graph $K_n$. 
We now give a generalization of this result in
the setting of Coxeter tournaments. 

\begin{theorem}\label{T_main}
\label{thm: Coxeter Moon's Theorem} 
Let $\Phi$ be a root system of type $B_n$, $C_n$ or $D_n$. 
Let $\mathcal{G}$ be a $\Phi$-graph. 
Then $x \in \RR^n$ is a mean score sequence of a random 
$\Phi$-tournament on $\mathcal{G}$ 
if and only if 
$\langle \lambda, x\rangle \le h_{\mathcal{G}}(\lambda)$ for all $\lambda \in \mathcal{L}$, 
where $h_\G = h_{Z_\G}$ is the $\Phi$-submodular function corresponding to the 
Coxeter graphical zonotope
$Z_\G$.
\end{theorem}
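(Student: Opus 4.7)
The plan is to reduce the statement to a direct combination of two results already in the paper: I will first use \cref{E_x} to identify the set of mean score sequences with the Coxeter graphical zonotope $Z_{\mathcal{G}}$, and then apply \cref{thm:submodular generalized permutahedra} to pass from the Minkowski-sum description of $Z_{\mathcal{G}}$ to the desired hyperplane description in terms of the associated $\Phi$-submodular function $h_{\mathcal{G}} = h_{Z_{\mathcal{G}}}$.

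For the first step, I would start from the identity
\[
x = \sum_{e \in E(\mathcal{G})} (p_e - 1/2)\,\Gamma(e)
\]
supplied by \cref{E_x} and make the substitution $t_e := p_e - 1/2$. Each edge contributes exactly one free parameter: for a negative edge the pair $\{p_{ij}^-, p_{ji}^-\}$ is collapsed by $p_{ij}^- + p_{ji}^- = 1$, for a positive edge $p_{ij}^+ = p_{ji}^+$ gives a single probability, and each half-edge (in type $B_n$) or loop (in type $C_n$) contributes its own independent probability. Hence, as the tournament probabilities range over $[0,1]$, the $t_e$ range independently over $[-1/2, 1/2]$, and the set of attainable mean score sequences is exactly
\[
\sum_{e \in E(\mathcal{G})} [-\Gamma(e)/2,\, \Gamma(e)/2] \;=\; Z_{\mathcal{G}},
\]
by \cref{def: Coxeter Graphic Zonotopes}. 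The factor of $2$ appearing in the loop term of \cref{E_signed_random_tournament} is absorbed correctly because $\Gamma$ sends $i \in L$ to $2e_i$ in type $C_n$, so no rescaling is required.

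For the second step, $Z_{\mathcal{G}}$ is by construction a $\Phi$-generalized permutahedron (a zonotope with root generators has all its edges parallel to roots of $\Phi$), so \cref{thm:submodular generalized permutahedra} immediately yields
\[
Z_{\mathcal{G}} \;=\; \{x \in \RR^n : \langle \lambda, x\rangle \leq h_{\mathcal{G}}(\lambda)\text{ for all }\lambda \in \mathcal{L}\},
\]
and combining with the previous paragraph gives the claim.

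There is no substantive obstacle beyond these two invocations; only a small amount of bookkeeping is required. The single point to verify carefully is that the coupling constraints $p_{ij}^- + p_{ji}^- = 1$ and $p_{ij}^+ = p_{ji}^+$ relate only probabilities attached to a common edge and do not introduce hidden couplings between distinct edges, so that the Minkowski-sum structure of $Z_{\mathcal{G}}$ is preserved verbatim when translating the set of mean score sequences into zonotope language.
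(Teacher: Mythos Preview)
Your proposal is correct and matches the paper's own proof essentially step for step: the paper likewise identifies the set of mean score sequences as the image of the cube $[0,1]^{|E(\mathcal{G})|}$ under the map $\{p_e\}\mapsto \sum_e (p_e-1/2)\Gamma(e)$, i.e.\ as $Z_{\mathcal{G}}$, and then invokes \cref{thm:submodular generalized permutahedra}. Your extra remarks about the per-edge parameter count and the loop factor of $2$ are just making explicit what the paper leaves implicit.
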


\begin{proof} Let $C_\G=[0,1]^{|\G|}$ be the $|\G|$-dimensional unit cube
indexed by $\alpha\in\G$. 
The zonotope $Z_\G$ is the image of $C_\G$ under the map $\EE:C_\G \to \RR^n$ given by
 \[
 \EE(\{p_e:e\in E(\G)\}) = \sum_{e\in E(\G)}(p_{e} - 1/2)\Gamma(e). 
 \]
The points of $C_\G$ are in bijection with random $\Phi$-tournaments 
on $\G$, 
and $\EE(\{p_{e}\})$ is precisely the mean score sequence 
of the $\Phi$-tournament  $T=\{p_{e}\}$. 
Therefore, $x\in\RR^n$ is a mean score sequence 
of a random $\Phi$-tournament on $\G$ if and only if it is in the image of 
$C_\G$ under $\EE$ (that is, in $Z_\G$). Hence, 
applying \cref{thm:submodular generalized permutahedra}, 
the result follows. 
\end{proof}

Next, we derive a ``signed'' version of this result, by 
identifying the $\Phi$-submodular function $h_\G$ corresponding to the 
Coxeter graphical zonotope
$Z_\G$. 
To do this,  recall 
(see \cref{E_fundweights}) 
that the fundamental weight conjugates $\mathcal{L}$ can be viewed as admissible 
subsets $S\subseteq [\pm n]$. Recall that (see \cref{D_admissible}) that for such a set $S$, 
we let $S_+=S\cap [n]$ and $S_-=S\cap[-n]$. 

\begin{definition}
For any admissible subset $S\subset[\pm n]$, 
let $S^{||}\subseteq [n]$ denote the set given by 
$S_+ \cup(-S_-)$.
\end{definition}

The following result is obtained by direct calculations. 
We omit the proof. 

\begin{proposition}\label{prop: h description} 
Let $\Phi$ be a root system of type $B_n$, $C_n$ or $D_n$. 
Let $\mathcal{G}$ be a $\Phi$-graph. 
For any subset $S \subseteq [n]$, let 
\begin{itemize}
\item $\E_k^\pm(S)$ denote the number positive/negative edges in $E^\pm$ 
with exactly $k\in\{1,2\}$ endpoints
in $S$.
\item  $\H(S)=|S\cap H|$ denote the number of half-edges in $S$. 
\item  $\L(S)=|S\cap L|$ denote the number of loops in $S$. 
\end{itemize}
The $\Phi$-submodular function 
$h_\G = h_{Z_\G}$ acts on admissible sets $S\subset[\pm n]$ as follows.
We have that $h_{\mathcal{G}}(S)$ is equal to 
\begin{itemize}
\item 
$\frac{1}{2} \E_1^-(S^{||})
+\frac{1}{2}\E_1^+(S^{||})
+\E_2^+(S^{||})
+\frac{1}{2}\H(S^{||})$ in $B_n$,
\item 
$\frac{1}{2} \E_1^-(S^{||})
+\frac{1}{2}\E_1^+(S^{||})
+\E_2^+(S^{||})
+\L(S^{||})$ in $C_n$,
\item 
$\frac{1}{2} \E_1^-(S^{||})
+\frac{1}{2}\E_1^+(S^{||})
+\E_2^+(S^{||})$ in $D_n$.
\end{itemize}
\end{proposition}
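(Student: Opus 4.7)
The plan is to invoke the standard support-function formula for a zonotope. Because the segments $[-\alpha/2,\alpha/2]$ in $Z_\G = \sum_{\alpha\in\Gamma(\G)}[-\alpha/2,\alpha/2]$ are maximized independently and each satisfies $\max_{t\in[-1/2,1/2]} t\langle\lambda,\alpha\rangle = \tfrac{1}{2}|\langle\lambda,\alpha\rangle|$, we obtain
\[
h_\G(\lambda) \;=\; \max_{x\in Z_\G}\langle\lambda,x\rangle \;=\; \tfrac{1}{2}\sum_{\alpha\in\Gamma(\G)}|\langle\lambda,\alpha\rangle|.
\]
The proposition will follow by evaluating this sum at each fundamental weight conjugate $\lambda$ (as parametrized by admissible $S\subset[\pm n]$) and grouping the contributions by edge type.

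For the evaluation I introduce the signed indicator $\chi_S\colon [n]\to\{-1,0,+1\}$ with $\chi_S(k)=+1$ if $k\in S_+$, $\chi_S(k)=-1$ if $-k\in S_-$, and $\chi_S(k)=0$ otherwise, so that $\lambda=\sum_k \chi_S(k)e_k$ (with an additional factor of $\tfrac{1}{2}$ in types $B_n$ and $D_n$ when $|S|=n$; see \cref{E_fundweights}). A direct computation gives the per-edge contributions: $\tfrac{1}{2}|\chi_S(i)-\chi_S(j)|$ for a negative edge $ij$, $\tfrac{1}{2}|\chi_S(i)+\chi_S(j)|$ for a positive edge, $\tfrac{1}{2}|\chi_S(i)|$ for a half-edge in type $B_n$, and $|\chi_S(i)|$ for a loop in type $C_n$. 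Edges with exactly one endpoint in $S^{||}$ contribute $\tfrac{1}{2}$ apiece, producing the $\tfrac{1}{2}\E_1^\pm(S^{||})$ summands, while half-edges meeting $S^{||}$ and loops in $S^{||}$ produce the $\tfrac{1}{2}\H(S^{||})$ and $\L(S^{||})$ summands.

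The main obstacle is the case of an edge with both endpoints in $S^{||}$, whose contribution depends on the signs $\chi_S(i),\chi_S(j)$ rather than only on $S^{||}$. I resolve this by reducing to a representative admissible $S\subseteq[n]$ via the Weyl-group action on the fan $\Sigma_\Phi$; there $\chi_S$ takes only the values $0$ and $+1$, so a positive edge with both endpoints in $S^{||}$ contributes $\tfrac{1}{2}\cdot 2=1$ and a negative edge contributes $0$, producing exactly the $\E_2^+(S^{||})$ term and no $\E_2^-$ contribution. Finally, the $\tfrac{1}{2}$ prefactor on $\lambda$ in types $B_n$ and $D_n$ when $|S|=n$ must be tracked against the doubled inner products for loop roots $2e_i$ in type $C_n$, and accounts for the type-specific differences in the final formulas.
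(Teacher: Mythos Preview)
Your support-function setup is exactly the ``direct calculation'' the paper alludes to (the paper omits the proof), and your per-edge contributions are correct. The gap is in your Weyl-group reduction. The Weyl group permutes the rays of $\Sigma_\Phi$, but $Z_\G$ is not $W$-invariant unless $\Gamma(\G)$ is closed (up to sign) under $W$, which happens only for $\G=K_\Phi$. In general $h_\G(w\cdot\lambda)=\max_{x\in Z_\G}\langle w\cdot\lambda,x\rangle=\max_{y\in w^{-1}Z_\G}\langle\lambda,y\rangle$, and applying a coordinate sign-flip at $i$ (an element of $W$ in types $B_n,C_n$) swaps positive and negative edges incident to $i$. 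So you cannot trade an arbitrary admissible $S$ for one contained in $[n]$ while keeping $h_\G$ fixed.

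In fact your own computation shows the formula cannot hold for \emph{all} admissible $S$. Take type $C_2$ and $\G$ consisting of a single positive edge $12$. Then $h_\G(e_S)=\tfrac12|\chi_S(1)+\chi_S(2)|$, which equals $1$ for $S=\{1,2\}$ but $0$ for $S=\{1,-2\}$; yet $S^{||}=\{1,2\}$ in both cases and the stated formula gives $\E_2^+(S^{||})=1$. The formula is correct precisely when the signs in $S$ are constant (e.g.\ $S\subseteq[n]$), which is all that is used downstream in \cref{C_Coxeter_Moon's_Theorem}; but as literally stated the proposition cannot be proved. Separately, your closing remark about the $\tfrac12$ prefactor in types $B_n,D_n$ when $|S|=n$ is too vague to be a proof: that prefactor halves the entire sum $\tfrac12\sum_\alpha|\langle e_S,\alpha\rangle|$, and nothing in the formula compensates, so the same literal reading of the statement is again inconsistent there.
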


As a result, we obtain the following equivalent 
version of \cref{T_main} (cf.\ \cref{E_compCn}). 

\begin{corollary}
\label{C_Coxeter_Moon's_Theorem}
Let $\Phi$ be a root system of type $B_n$, $C_n$ or $D_n$. 
Let $\mathcal{G}$ be a $\Phi$-graph. 
Then $x \in \RR^n$ is a mean score sequence of a random 
$\Phi$-tournament on $\mathcal{G}$ 
if and only if 
for any subset $S = \{i_1,\ldots,i_k\} \subseteq [n]$
we have that $\sum_{j=1}^k|x_{i_j}|$ is at most 
\begin{itemize}
\item 
$\frac{1}{2} \E_1^-(S)
+\frac{1}{2}\E_1^+(S)
+\E_2^+(S)
+\frac{1}{2}\H(S)$ in $B_n$,
\item
$\frac{1}{2} \E_1^-(S)
+\frac{1}{2}\E_1^+(S)
+\E_2^+(S)
+\L(S)$ in $C_n$, 
\item
$\frac{1}{2} \E_1^-(S)
+\frac{1}{2}\E_1^+(S)
+\E_2^+(S)$ in $D_n$.
\end{itemize}
\end{corollary}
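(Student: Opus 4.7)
The plan is to combine \cref{T_main} with \cref{prop: h description} and then reorganize the resulting family of inequalities. \cref{T_main} already says that $x \in \RR^n$ is a mean score sequence of a random $\Phi$-tournament on $\mathcal{G}$ if and only if $\langle \lambda, x\rangle \leq h_{\mathcal{G}}(\lambda)$ holds for every fundamental weight conjugate $\lambda \in \mathcal{L}$. What remains is to rewrite this family, currently indexed by $\mathcal{L}$, as a family indexed by unsigned subsets $S \subseteq [n]$ with $\sum_{i \in S}|x_i|$ on the left-hand side.

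The first step is to translate the weight conjugates into admissible signed subsets. By the bijection recorded in the example following the definition of $\mathcal{L}$, every $\lambda \in \mathcal{L}$ is of the form $e_{S'}$ or $\tfrac{1}{2} e_{S'}$ for some admissible $S' \subseteq [\pm n]$ (with the factor $\tfrac{1}{2}$ occurring exactly when $|S'| = n$ in types $B_n$ and $D_n$). Because the support function $h_{\mathcal{G}}$ is positively homogeneous, the inequality $\langle \lambda_{S'}, x\rangle \leq h_{\mathcal{G}}(\lambda_{S'})$ rescales in each case to the uniform form $\langle e_{S'}, x\rangle \leq h_{\mathcal{G}}(e_{S'})$, interpreting $h_{\mathcal{G}}$ as its linear extension on each cone of the Coxeter fan. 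By \cref{prop: h description}, this right-hand side depends only on $S'^{||}$.

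The second and key step is a sign optimization. For a fixed $S \subseteq [n]$, the admissible $S' \subseteq [\pm n]$ with $S'^{||} = S$ are parameterized by sign assignments $\varepsilon \in \{\pm 1\}^S$, with $\langle e_{S'}, x\rangle = \sum_{i \in S}\varepsilon_i x_i$, while the right-hand side $h_{\mathcal{G}}(e_{S'})$ is constant across the family. The requirement that all these inequalities hold simultaneously is thus equivalent to the single inequality obtained by maximizing the left-hand side via $\varepsilon_i = \operatorname{sign}(x_i)$, which gives $\sum_{i \in S}|x_i| \leq h_{\mathcal{G}}(S)$. Substituting the formulas from \cref{prop: h description} recovers the corollary in each of the three types.

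The main obstacle is the book-keeping around the boundary admissible sets, where different types behave differently: the half-integer weights $\tfrac{1}{2} e_{S'}$ for $|S'| = n$ in $B_n$ and $D_n$, and the fact that in $D_n$ there is no weight conjugate when $|S'| = n-1$. The former is absorbed cleanly into the homogeneity step above, so that the uniform formula in the corollary is the correctly normalized one in every case. For the latter, the $|S| = n-1$ inequality in $D_n$ must be shown to be implied by the $|S| \leq n-2$ and $|S| = n$ inequalities already produced; this follows from the submodularity of $h_{\mathcal{G}}$ on the Coxeter fan, applied to a decomposition of a size-$(n-1)$ ray as a positive combination of conjugates appearing elsewhere in $\mathcal{L}$. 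Once this is in place, the corollary reads off from \cref{prop: h description}.
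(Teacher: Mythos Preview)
Your approach is the same as the paper's: the corollary is presented there simply as the combination of \cref{T_main} with \cref{prop: h description}, and you have correctly identified that the passage from the family indexed by $\mathcal{L}$ to the family indexed by unsigned subsets $S\subseteq[n]$ is the sign optimization $\max_{\varepsilon}\sum_{i\in S}\varepsilon_i x_i=\sum_{i\in S}|x_i|$, together with the observation that the right-hand side in \cref{prop: h description} depends only on $S'^{||}$. Your treatment of the half-integer weights via homogeneity of the support function is also correct.

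There is one genuine slip in your handling of the $D_n$ boundary case $|S|=n-1$. You propose to derive the missing inequality from those with $|S|\le n-2$ and $|S|=n$ ``by submodularity.'' But the submodular inequality $h(\lambda+\lambda')\le h(\lambda)+h(\lambda')$ goes the wrong way: writing $e_{S'}$ as a positive combination of rays $\lambda_i\in\mathcal{L}$ and summing the known bounds gives $\langle e_{S'},x\rangle\le\sum c_i h(\lambda_i)$, whereas sublinearity only tells you $h(e_{S'})\le\sum c_i h(\lambda_i)$, which does not yield $\langle e_{S'},x\rangle\le h(e_{S'})$. What actually works is the \emph{linearity} of the support function on each cone of the Coxeter fan: $e_{S'}$ lies in a cone whose rays include $\tfrac12 e_{S'\cup\{n\}}$ and $\tfrac12 e_{S'\cup\{-n\}}$, and on that cone $h$ is linear, giving equality rather than inequality. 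Alternatively, and more simply, you can sidestep the issue entirely: for the forward direction use that $x\in Z_{\mathcal G}$ implies $\langle v,x\rangle\le h_{\mathcal G}(v)$ for \emph{every} $v$ (not just those in $\mathcal{L}$), so the $|S|=n-1$ inequality comes for free; for the backward direction the $|S|=n-1$ inequalities are simply redundant and not needed to conclude $x\in Z_{\mathcal G}$.
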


\subsection{Extending Landau's theorem}
\label{S_landau}

In this section, we 
prove \cref{T_CoxLan}, which gives
a partial description of deterministic 
score sequences in the Coxeter setting. 
Recall the definitions in 
\cref{S_into_cox_lan} above. 

We begin by proving 
\cref{T_CoxLan}(2), which states that 
if $\G$ has no half-edges, then 
Landau's theorem does not extend when $\G$ is 
unbalanced. 

\begin{proof}[Proof of \cref{T_CoxLan}(2)]
Suppose that $\G$ has no half-edges and a 
loop or cycle $\C\subset \G$ with an odd number 
of positive edges. 
Consider the Coxeter random tournament which assigns
probabilities $1/2$ on $\C$, and puts probability 1
everywhere else in $\G$. We claim that there is no deterministic
tournament with the same mean score sequence $x$. 
In fact, there is no  
such tournament whose score sequence has the same total 
sum as $x$.

To see this, first note that changing the probability of any negative 
edge in $\G$ has no effect on the total sum of the mean score sequence, since
this only shifts points between the endpoints. 
On the other hand, changing the probability of a positive edge in $\C$ 
from $1/2$ to $0$ (resp.\ $1$) decreases (resp.\ increases) 
the total sum by $1$. 
On the other hand, changing the probability of a loop or 
positive edge 
outside of $\C$ 
from $1$ to $0$ 
decreases 
the total sum by $2$. 

Since $\C$ has an odd number 
of positive edges, any reassignment 
of probabilities along $\C$ to make it deterministic 
will result in increasing or decreasing the total sum
of the mean score sequence by an odd number. 
There is no way to compensate for this by changing some  
probabilities from 1 to 0 on loops and positive edges outside of $\C$, 
since any such reassignment results in decreasing the total sum
by an even number. 
\end{proof}

The rest of this section is devoted to the proof of
\cref{T_CoxLan}(1). 
Recall that we let $\G'$ denote the subgraph 
of $\G$ 
obtained by removing any half-edges. 
We will show that if $\G'$ is balanced, then 
any integer lattice point in the translated graphical 
zonotope $t\in \ZZ^n\cap Z_\G^{\rm tr}$ can be realized as $t=s+\rho_\G$, 
for some score sequence
$s$ of a deterministic Coxeter tournament on $\G$. 

We recall that the notion of a balanced signed graph  
appears in \cref{D_balanced} above, and that there
are many other characterizations. For us, the main utility 
is the following 
result of Heller and Tompkins \cite{HT56} 
and Hoffman and Gale \cite{HG56}
regarding the incidence matrix of a signed graph. 
For a signed graph $\mathcal{G}$ on vertex set $[n]$, let $I(\mathcal{G})$ be a matrix whose 
columns are the roots corresponding to each of the edges of $\mathcal{G}$. We have only 
defined this matrix up to a reordering of the columns, but that will be immaterial for us. 
Recall that a matrix is {\it totally unimodular} if every 
(maximal) 
minor has determinant $\pm 1$.

\begin{theorem}[\hspace{1sp}\cite{HT56,HG56}]\label{thm:balanced is unimodular} 
Let $\mathcal{G}$ be a signed graph 
and $\G'$ the subgraph of $\G$ obtained by 
removing any half-edges. 
Then $\G'$ 
is balanced 
if and only if $I(\G)$ is totally unimodular.
\end{theorem}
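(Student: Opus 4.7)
The plan is to prove the biconditional by exhibiting explicit non-unimodular minors for the forward direction, and by induction on the size of the submatrix for the reverse direction.

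For the direction $(\Leftarrow)$, assuming $I(\G)$ is totally unimodular, I would first observe that a loop at vertex $i$ contributes a column containing the entry $\pm 2$ (since the associated root in type $C_n$ is $2e_i$), and the $1 \times 1$ minor consisting of that entry already violates unimodularity; hence $\G$ has no loops. Next, suppose for contradiction that $\G'$ contains a cycle $C$ with an odd number of positive edges. I would consider the square submatrix $M_C$ of $I(\G)$ whose rows are indexed by the vertices of $C$ and whose columns by the edges of $C$. A direct cofactor expansion, or equivalently a walk around $C$ tracking the signs contributed at each vertex, yields $\det M_C = \pm 2$, again contradicting unimodularity. So $\G'$ must be balanced.

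For the direction $(\Rightarrow)$, assuming $\G'$ is balanced (so $\G$ has no loops, and every cycle of $\G'$ has an even number of positive edges), I would prove by induction on $k$ that every $k \times k$ submatrix $M$ of $I(\G)$ satisfies $\det M \in \{-1, 0, 1\}$. The base case $k = 1$ is immediate, since each entry of $I(\G)$ lies in $\{-1, 0, 1\}$ once loops are ruled out and half-edges contribute a single $+1$. For the inductive step, I would split into three cases: (i) if some column of $M$ is zero, then $\det M = 0$; (ii) if some column of $M$ has exactly one nonzero entry, as occurs for half-edge columns or for edges exactly one of whose endpoints is among the rows of $M$, then I would expand along that column and apply the inductive hypothesis to the resulting $(k-1) \times (k-1)$ minor; (iii) otherwise every column of $M$ has exactly two nonzero entries, and then the rows and columns of $M$ realise a subgraph $H$ of $\G'$ with $|V(H)| = |E(H)| = k$, so $H$ contains a cycle $C$.

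The main obstacle is showing, in case (iii), that the cycle $C$ produces a linear dependence among the columns of $M$, which forces $\det M = 0$. For this I would traverse $C$ and assign signs $\eps_e \in \{\pm 1\}$ to each edge $e \in C$ by the local rule that the sign is preserved across a negative edge and flipped across a positive edge. The balance hypothesis, namely that $C$ has an even number of positive edges, is precisely what is needed for this sign assignment to close up consistently upon returning to the starting edge. A vertex-by-vertex check, comparing the contributions of the two incident edges of $C$ at each vertex, then yields $\sum_{e \in C} \eps_e \, \Gamma(e) = 0$ as a column relation in $M$, closing the induction. The only subtlety is keeping track of edge orientations relative to the traversal of $C$, but this reduces to a short case analysis on the four combinations of (positive versus negative) edge and (entering versus leaving) orientation at each vertex.
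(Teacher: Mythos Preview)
The paper does not supply a proof of this statement; it is quoted from Heller--Tompkins and Hoffman--Gale and invoked as a black box in the proof of \cref{T_CoxLan}~(1). There is therefore nothing in the paper to compare your argument against, but your proposal can be assessed on its own.

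Your argument is correct and is essentially the classical one. For $(\Leftarrow)$, the $1\times 1$ loop minor disposes of loops, and for a cycle $C$ with $p$ positive edges the $|C|\times|C|$ incidence submatrix has determinant $\pm\bigl(1-(-1)^p\bigr)$, which is $\pm 2$ when $p$ is odd; this is exactly the cofactor/walk computation you describe. For $(\Rightarrow)$, the three-case induction is the standard route. Two small points worth making explicit in case~(iii): a zero \emph{row} of $M$ (an isolated vertex of $H$) already forces $\det M=0$ and should be handled before appealing to the existence of a cycle; and the description of the sign rule for the $\eps_e$ is slightly loose as stated (``preserved across a negative edge and flipped across a positive edge'' does not quite capture the dependence on how the edge is oriented relative to the traversal), but you flag this yourself, and the honest recursion $\eps_{e'} = -\eps_e\, c_e\, c_{e'}$ at each shared vertex $v$ (where $c_e, c_{e'}\in\{\pm1\}$ are the entries of the two incident columns at row $v$) closes up around $C$ precisely when $(-1)^p=1$, i.e., when $C$ is balanced. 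With those clarifications the proof is complete.
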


We will prove \cref{T_CoxLan}(1)
by studying the mixed subdivisions of the Coxeter graphical zonotope
$Z_\G$. 
This is a natural 
extension of the proof given in \cite{KS24}. 

\begin{definition}

A {\it zonotopal subdivision} of a zonotope $P$ is a collection of zonotopes $\{P_i\}$ such 
that
$\bigcup_i P_i = P$ and any two zonotopes $P_i$ and $P_j$ intersect properly 
(that is,  $P_i$ and $P_j$ intersect at a face of both, or else not at all) and 
their intersection is also in the collection $\{P_i\}$. 
We call the zonotopes $P_i$ the {\it tiles} of the subdivision.
\end{definition}

The following is a classical result of Shepard. Recall that 
\[
Z(v_1,\ldots,v_k)=\sum_{i=1}^k[0,v_i]
\]
is the zonotope generated by the collection of vectors $v_1, \ldots, v_k$. 

\begin{theorem}[Shepard \cite{S74}]
Let $Z=Z(v_1, \ldots, v_k)$ be a zonotope generated by the vectors $v_1, \ldots, v_k$. 
For any subset $S$ of these vectors, let $Z_S$ denote the zonotope generated by the vectors in $S$. 
Then, there exists a zonotopal subdivision of $Z$ where the tiles are the zonotopes $Z_S$, 
where $S$ ranges over the 
linearly independent subsets of 
$\{v_1, \ldots, v_k\}$.
\end{theorem}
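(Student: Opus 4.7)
The plan is to construct a regular zonotopal subdivision of $Z$ via a generic lifting, a standard technique for producing regular polytopal subdivisions.

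First, I would pick generic heights $h_1,\ldots,h_k\in\RR$ and form the lifted vectors $\tilde{v}_i=(v_i,h_i)\in\RR^{n+1}$. Define the lifted zonotope $\tilde{Z}=\sum_{i=1}^k[0,\tilde{v}_i]$ and let $\pi:\RR^{n+1}\to\RR^n$ be the projection onto the first $n$ coordinates, so that $\pi(\tilde{Z})=Z$.

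Next, I would consider the upper envelope of $\tilde{Z}$, that is, the union of faces whose outward normal has positive last coordinate. The projections of these faces under $\pi$ cover $Z$, and by the standard regular-subdivision construction, they form a polyhedral subdivision of $Z$ meeting along faces.

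To identify the tiles, I would use the description of the faces of a zonotope under Minkowski decomposition: every face of $\tilde{Z}$ is a translate of the zonotope generated by $\{\tilde{v}_i:i\in S\}$ for some $S\subseteq[k]$, where $S$ records the indices whose segment is retained in full in that face. For such a face of the upper envelope to project to a top-dimensional tile of $Z$, the projection $\pi$ must be injective on its affine span, which is equivalent to $\{v_i:i\in S\}$ being linearly independent in $\RR^n$. The projected tile is then a translate of $Z_S$. Lower-dimensional tiles of the subdivision arise from lower-dimensional faces of the upper envelope and correspond to proper independent subsets.

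The main obstacle is making the genericity precise: one must verify that for generic $h$ every face of the upper envelope does project to a tile of the form $Z_S$ with $S$ linearly independent, and that the collection meets along faces to form a bona fide subdivision. This reduces to a dimension count showing that the ``bad'' heights lie in a finite union of hyperplanes of $\RR^k$, so the complement (on which the construction succeeds) is dense. The formal framework for all of this is classical --- it is essentially the theory of fine zonotopal tilings and the Bohne--Dress theorem, which identifies the regular tilings of $Z$ with liftings of the underlying oriented matroid.
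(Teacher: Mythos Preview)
The paper does not prove this statement; it is quoted as a classical result of Shepard \cite{S74} and used without proof. So there is no ``paper's own proof'' to compare against.

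That said, your approach is correct and is in fact one of the standard ways to establish Shepard's theorem. The lifting construction you describe produces a \emph{regular} zonotopal subdivision, and the identification of upper faces of $\tilde Z$ with translates of sub-zonotopes $Z_S$ for linearly independent $S$ is exactly right. Two small remarks. First, as you note, the tiles are \emph{translates} of $Z_S$ rather than $Z_S$ itself; this matches how the paper subsequently uses the result (writing each tile as $\sum_{v\in U}\{0\}+\sum_{v\in V}\{v\}+\sum_{v\in S}[0,v]$). Second, the genericity step you flag as the ``main obstacle'' is routine: the bad heights are those for which some upper face has $\{\tilde v_i:i\in S\}$ with $\{v_i:i\in S\}$ dependent, and this forces a linear relation among the $h_i$, cutting out finitely many hyperplanes in $\RR^k$. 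Invoking Bohne--Dress is more than is needed here, since you only require the existence of \emph{one} fine tiling, not a classification of all of them; the direct lifting argument already suffices.
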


Stanley calculated the Ehrhart polynomial of any zonotope. In particular,
this describes the number of lattice points in a lattice zonotope.

\begin{theorem}[{Stanley \cite[p.\ 557]{S91}}]
\label{prop: integer points in zonotope}
Let $Z=Z(v_1, \ldots, v_k)$ be the zonotope generated by the integer vectors $v_1, \ldots, v_k$. 
Then the number of integer lattice points in $Z$ is
given by the sum 
 \[\sum_{S}m(S), \]
where $S$ ranges over all linearly independent subsets of $\{v_1, \ldots, v_k\}$ and $m(S)$ is the 
absolute value of the 
greatest common divisor of all minors of size $|S|$ in the matrix whose columns are the vectors in $S$.
\end{theorem}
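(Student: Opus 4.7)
The plan is to combine Shephard's subdivision (cited just above) with a \emph{half-open refinement} and then match each piece to one summand $m(S)$ via a standard lattice-point count in a parallelepiped. The outcome will be a disjoint decomposition
\[
 Z \;=\; \bigsqcup_{S \text{ lin.\ indep.}} P_S,
\]
in which each $P_S$ is a half-open parallelepiped spanned by the vectors of $S$, so that summing $|P_S \cap \ZZ^n|$ over all $S$ recovers the claimed formula.

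First I would fix a generic linear functional $\omega \in (\RR^n)^*$, generic in the sense that $\langle \omega, \sum_{v \in T} \pm v\rangle \neq 0$ for every signed subsum over $T \subseteq \{v_1, \ldots, v_k\}$. Shephard's theorem supplies a tiling of $Z$ whose full-dimensional tiles are the parallelepipeds $Z_B$ indexed by the bases $B$ of the linear matroid on $\{v_1, \ldots, v_k\}$; the lower-dimensional tiles $Z_S$ (for $S$ linearly independent but not a basis) appear as shared faces. I would then use $\omega$ to orient every generator: replace each translate of $Z_B$ in the tiling by its half-open version obtained by deleting the face of $Z_B$ in the $-v_i$ direction whenever $\langle \omega, v_i\rangle > 0$, and in the $+v_i$ direction otherwise. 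Standard zonotope theory (a shelling argument, or equivalently a direct check that each face of the subdivision is included in exactly one half-open tile) shows that the resulting pieces are pairwise disjoint and exhaust $Z$, and furthermore that a face $Z_S$ of $Z_B$ survives in its own half-open form $P_S$ exactly when $S$ is linearly independent. This gives the bijection between tiles of the disjoint decomposition and linearly independent subsets $S$.

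The final step is the lattice-point count in a single half-open parallelepiped. For any linearly independent integer vectors $S = \{w_1, \ldots, w_d\}$ the map $\ZZ^d \to \ZZ^n$ sending the standard basis to $S$ has image of index equal to the absolute value of the gcd of all $d \times d$ minors of the matrix with columns in $S$, and this index equals the number of cosets of the sublattice $\ZZ S$ in $\ZZ^n \cap \operatorname{span}(S)$; choosing the unique coset representative inside the half-open fundamental domain shows
\[
 \bigl|P_S \cap \ZZ^n\bigr| \;=\; m(S).
\]
Summing over all linearly independent $S$ then produces $|Z \cap \ZZ^n| = \sum_S m(S)$, as required.

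The main obstacle I anticipate is the combinatorics of the half-open refinement: verifying that every face of the Shephard subdivision is covered exactly once (including the low-dimensional faces $Z_S$ for non-basis independent $S$, which do not appear as full tiles) is delicate, and this is where a carefully chosen shelling order compatible with $\omega$ is needed. Once that bookkeeping is in place the lattice-point identification for each $P_S$ is routine linear algebra (Smith normal form), and the theorem follows.
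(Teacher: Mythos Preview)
The paper does not prove this statement; it is quoted from Stanley \cite{S91} as a known result and used as a black box in the proof of \cref{T_CoxLan} (1). So there is no ``paper's own proof'' to compare against.

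That said, your plan is essentially the standard argument, and is close to how Stanley proves it in the cited reference: take Shephard's zonotopal subdivision into parallelepipeds indexed by independent sets, pass to a half-open refinement so the pieces are disjoint, and then count lattice points in each half-open parallelepiped using the index formula (gcd of maximal minors via Smith normal form). Your identification $|P_S\cap\ZZ^n|=m(S)$ is correct, and the obstacle you flag---ensuring each face of the subdivision is assigned to exactly one half-open tile---is indeed the only place where care is required; a generic functional $\omega$ handles it as you describe.
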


We note that the linearly independent subsets in the statement 
above need not be maximal. 

Therefore, in particular, if the matrix with column vectors $v_1, \ldots, v_k$ is totally unimodular, 
then every tile in the zonotopal subdivision has no interior lattice points.

\begin{definition}
Let $P = P_1 + \cdots + P_k$ be the Minkowski sum
of polytopes. 
A {\it mixed cell} (or {\it Minkowski cell}) $\sum_{i} B_i$ is a Minkowski sum of polytopes, 
where the vertices of $B_i$ are contained in the vertices of $P_i$. 
A {\it mixed subdivision} of $P$ is a collection of mixed cells which cover $P$ and intersect properly 
(that is, for any two mixed cells $\sum B_i$ and $\sum B_i'$ 
the polytopes $\sum_i B_i$ and $\sum_i B_i'$ intersect at a face of both, 
or else not at all).
\end{definition}

For zonotopes, every zonotopal subdivision is a mixed subdivision, and vice-versa
(see, e.g., 
De~Loera, Rambau and Santos \cite[Lemma 9.2.10]{DRS10}).
This means that every tile of a zonotopal subdivision of $Z^{\rm tr}_{\mathcal{G}}$ 
is a Minkowski sum of the faces of the segments $[0, \alpha_i]$ where $\alpha_i$ 
are the roots that correspond to the edges of $\mathcal{G}$. 
The faces of these segments are either the points $\{0\}$ and $\{\alpha_i\}$, 
or else the entire segment
$[0, \alpha_i]$.

Let $Z_{S_1}, \ldots, Z_{S_k}$ be the tiles in a mixed subdivision of 
$Z^{\rm tr}_{\mathcal{G}}$, where each $S_i$ is a linearly independent subset of the roots 
corresponding to the edges of $\mathcal{G}$. 
Then, by the previous argument, for every $S_i$, 
there exists a partition $U_i\cup V_i \cup S_i = \Gamma(\mathcal{G})$ such that
 \[Z_{S_i} = \sum_{v \in U_i} \{0\} + \sum_{v \in V_i} \{v\}+ \sum_{v \in S_i} [0, v]. \]

We can now complete the proof of \cref{T_CoxLan}. 

\begin{proof}[Proof of \cref{T_CoxLan}(1)]
 Let $s$ be a mean score sequence of $\mathcal{G}$ and put 
 $t = s + \rho_{\mathcal{G}}$. 
By \cref{thm: Coxeter Moon's Theorem}, 
we have $s\in Z_\G$ and so $t\in Z^{\rm tr}_{\mathcal{G}}$. 
Consider a zonotopal subdivison of $Z^{\rm tr}_{\mathcal{G}}$ 
and let $Z_S$ be one of the tiles containing the point $t$. 
As noted above, 
there is a partition $U \cup V \cup S = \Gamma(\mathcal{G})$ such that
  \[Z_{S} = \sum_{v \in A} \{0\} + \sum_{v \in B} \{v\}+ \sum_{v \in S} [0, v]. \]
As such, 
$t=0 + \sum_{v \in B}v + r$, 
for some 
$r \in \sum_{v \in S}[0,v]$. 
Notice that $t$ is an integer point if and only if $r$ is an integer vector.

Note that $\sum_{v \in S} [0, v]$ is the zonotope generated by the set of 
vectors $v\in S$. Since $\mathcal{G}'$ is balanced, the matrix $I(\mathcal{G})$ is 
totally unimodular by \cref{thm:balanced is unimodular}. 
This is the same matrix that appears in the lattice point count of the zonotope, 
as in \cref{prop: integer points in zonotope}. 
Therefore, every tile, including $Z_S$, has no interior lattice points.
Altogether, this means that if $t$ is an integer vector, then $r$ is an integer vector. 
Since the only lattice points of $\sum_{\alpha \in S}[0,v]$ are the vertices, 
this means that $r = \sum_{v \in S} c_{v} v$, for some   
constants $c_{v}\in\{0,1\}$, and this completes the proof. 
 \end{proof}

Let us remark that if there are no half-edges (that is, $\G=\G'$) then 
one can alternatively prove this result by relying on the classical, type $A_{n-1}$ result.  
Recall that two signed graphs are {\it sign-switching equivalent} (see, e.g., \cite{Zas81,Zas82}) 
if and only if one graph can be obtained from the other by a sequence 
of operations which flip the sign of every edge incident to a vertex. 
For a (signed) $\Phi$-graph 
$\G$ of type $B_n$, $C_n$ or $D_n$ 
we have that $\G'$ 
is balanced if and only if it is sign-switching equivalent to 
a type $A_{n-1}$ graph. 
From the polytope perspective, 
switching the signs of edges at a vertex corresponds to a reflection of a coordinate hyperplane.
In this case
$Z_\G$ 
can be reflected across coordinate hyperplanes until it is the graphical zonotope of type $A_{n-1}$ graph.  
Since these reflections map integer lattice vectors to integer lattice vectors, 
Landau's classical theorem gives the result. 
However, the proof we have given above in fact reproves the type $A_{n-1}$ case, 
rather than relying on it, and also allows for half-edges.

\begin{example}
The root lattice for the type $C_n$ root system consists of integer vectors 
whose coordinates sum to an even number. 
One might wonder if there is an analogue of Landau's theorem for type $C_n$ graphs 
stating that deterministic score sequences correspond to integer vectors 
whose coordinates sum to an even number. 
However, this is false.
To see this, consider the Coxeter tournament consisting of two loops. 
The tournament which assigns probability $1/2$ to each loop has 
mean score sequence $(0,0)$, 
which cannot be achieved deterministically.
\end{example}

\section{The complete case}
\label{S_complete}

Classical (type $A_{n-1}$) tournaments have primarily been studied 
on the complete graph $K_n$. In this situation, one obtains 
more elegant results 
and connections with other areas 
in, e.g., combinatorics, probability and optimization. 
In this section, we generalize some of these connections to the complete $\Phi$-graph 
$\K_\Phi$ case.

\subsection{$W$-majorization}
\label{S_Wmaj}
 
As discussed in \cref{S_into_tour}, 
when $G=K_n$, 
Moon's theorem 
can be stated succinctly in the language of majorization. 
That is, an $x\in\RR^n$ is a mean score sequence 
of a random tournament on $K_n$ if and only if 
$x$ is majorized by $v_n$, written as $x\preceq v_n$. 
In this section, we note that this statement generalizes to the Coxeter setting, via
the language of {\it $G$-majorization}.

\begin{definition}[{\hspace{1sp}\cite[Section C]{MOA11}}] 
Let $G$ be a group and $V$ a representation of $G$. 
We say that $v \in V$ is {\it $G$-majorized} by $u \in V$, 
denoted by $v \preceq_G u$ if $v$ is in 
$\operatorname{conv}\{ g\cdot u: g \in G\}$, that is, 
if $v$ is the convex hull of the orbit of $u$. 
\end{definition}

When $G=W$ is the Weyl group of type $A_{n-1}$ then $W$-majorization 
is the same as the usual notion of majorization. In types $B_n, C_n,$ and $D_n$, 
direct calculations show that $W$-majorization $\preceq_W$ is the same as weak sub-majorization 
$\preceq_w$, as defined in \cref{S_into_cox_moon_comp} above. 
As a result, we obtain the following, by which 
\cref{T_cox_moon_complete} above follows. 

\begin{proposition} 
Let $\Phi$ be a root system of type $B_n$, $C_n$ or $D_n$. 
Then $x\in\RR^n$ is a mean score sequence 
of a $\Phi$-tournament on the complete $\Phi$-graph $\K_{\Phi}$
if and only if $s$ is $W$-majorized by 
the Weyl vector 
$\rho_\Phi = \sum_{ \alpha \in \Phi^+} \alpha/2=v_n+\delta_\Phi 1_n$, 
as defined in \cref{T_cox_moon_complete}. 
\end{proposition}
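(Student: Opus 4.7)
The plan is to chain together three facts that have already been established in the paper. First, apply the Coxeter analogue of Moon's theorem (\cref{thm: Coxeter Moon's Theorem}) to the graph $\mathcal{G} = K_\Phi$: this gives that $x \in \RR^n$ is a mean score sequence of a random $\Phi$-tournament on $K_\Phi$ if and only if $x \in Z_{K_\Phi}$. Second, observe from \cref{D_phi_graph,D_complete_signed_graphs} that the bijection $\Gamma$ identifies $E(K_\Phi)$ with the full positive system $\Phi^+$, so that
\[
Z_{K_\Phi} = \sum_{\alpha \in \Phi^+}[-\alpha/2,\alpha/2] = \Pi_\Phi,
\]
by \cref{D_Phiperm}. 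Third, combine this with the equivalent description of $\Pi_\Phi$ as $\operatorname{conv}\{w\cdot\rho : w \in W\}$, also from \cref{D_Phiperm}: membership $x \in \Pi_\Phi$ then reads exactly as $x \in \operatorname{conv}\{w\cdot\rho_\Phi : w\in W\}$, which is $x \preceq_W \rho_\Phi$ by the definition of $W$-majorization.

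What remains is the identification $\rho_\Phi = v_n + \delta_\Phi 1_n$, which is a short computation from $\rho_\Phi = \sum_{\alpha\in\Phi^+}\alpha/2$ using the explicit positive systems in \cref{E_possystems}. For instance, in type $C_n$, the pairs $\pm(e_i - e_j)/2$ and $\pm(e_i+e_j)/2$ over $i>j$ contribute $\sum_i (i-1)e_i$, and the long roots $2e_i$ contribute $\sum_i e_i$, yielding $\rho_{C_n} = (1,2,\ldots,n) = v_n + 1_n$. Types $B_n$ and $D_n$ are analogous, giving $\delta_{B_n} = 1/2$ and $\delta_{D_n}= 0$ respectively.

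Finally, to recover \cref{T_cox_moon_complete} as a consequence, one needs the separate claim that in types $B_n$, $C_n$ and $D_n$, $W$-majorization $x \preceq_W u$ coincides with weak sub-majorization $|x| \preceq_w |u|$ of the absolute-value vectors. This can be verified directly: in these types, $W$ acts on $\RR^n$ by signed permutations (or even-signed permutations in $D_n$), so the convex hull of the $W$-orbit of $\rho_\Phi$ is exactly the set of $x$ whose coordinate-wise absolute values are weakly sub-majorized by $\rho_\Phi$. Equivalently, this is the statement that the $\Phi$-submodular inequalities $\langle \lambda, x\rangle \le h_{\Pi_\Phi}(\lambda)$ provided by \cref{thm:submodular generalized permutahedra} reduce, upon taking $\lambda$ over all fundamental weight conjugates, to the inequalities $\sum_{j=1}^k |x|_{i_j}^\downarrow \le \sum_{j=1}^k j$ for each $k$ (as computed in \cref{E_compCn} for type $C_n$, with the obvious modifications in the other types).

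There is no real obstacle here: the proposition is essentially a dictionary translation between the geometric description of $\Pi_\Phi$ via its $W$-orbit and the tournament-theoretic description via the zonotope $Z_{K_\Phi}$, with \cref{thm: Coxeter Moon's Theorem} supplying the bridge. The only mildly delicate point is verifying, type by type, that $\preceq_W$ restricted to these Weyl groups really does coincide with $\preceq_w$ on absolute values, but this is a standard consequence of the fact that signed permutations generate these Weyl groups and act transitively on sign patterns.
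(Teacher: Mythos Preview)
Your proof is correct and takes essentially the same approach as the paper: both arguments identify mean score sequences on $K_\Phi$ with points of $Z_{K_\Phi}=\Pi_\Phi$ via \cref{thm: Coxeter Moon's Theorem}, and then unwind the definition of $W$-majorization as membership in $\operatorname{conv}\{w\cdot\rho_\Phi:w\in W\}=\Pi_\Phi$. The only cosmetic difference is that the paper passes through the hyperplane description of $\Pi_\Phi$ (via \cref{thm:submodular generalized permutahedra}) before invoking \cref{T_main}, whereas you use the orbit/Minkowski description of $\Pi_\Phi$ from \cref{D_Phiperm} directly; your route is if anything slightly more economical.
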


\begin{proof} By definition, $x$ is $W$-majorized by $\rho_\Phi$ if 
\[
x \in \operatorname{conv}\{ w\cdot \rho_\Phi:w\in W\} = \Pi_{\Phi}.
\] 
Using the hyperplane description of $\Pi_{\Phi}$ given 
by \cref{thm:submodular generalized permutahedra}, 
this holds if and only if 
$\langle \lambda, x\rangle \le h(\lambda)$ for all $\lambda \in \mathcal{L}$, 
where $h$ is the Coxeter submodular function associated to $\Pi_{\Phi}$. 
The result then follows by \cref{T_main}.
\end{proof}

\subsection{Geometric proof}
\label{S_birk}

In this section, 
inspired by one of the proofs of Moon's classical 
(type $A_{n-1}$)
theorem in \cite{AK18}, 
we prove \cref{T_cox_moon_complete}
using the Coxeter analogue of Birkhoff's theorem \cite{Bir46} (cf.\ von Neumann \cite{vonNeu53}). 
For simplicity we will prove this for the type $C_n$ root system, but our arguments 
can be adapted to types $B_n$ and $D_n$.

Recall that 
Birkhoff's theorem
 states that every 
{\it doubly stochastic} matrix (non-negative with all row and column sums equal to 1)
is a mixture of permutation matrices (0/1 matrices with exactly one 1 in each row and column).
That is, the 
{\it Birkhoff polytope} $\Birk_n$ of doubly stochastic matrices $P\in\RR^{n\times n}$ 
is the convex hull of the set ${\rm Perm}_n$ of permutation matrices of the same size. 

The proof in \cite{AK18} which we are generalizing is
probabilistic. However, the strategy can be described
combinatorially, by taking the following three steps:
\begin{enumerate}
\item First, note that a vector $x\preceq v_n$ (the conditions in Moon's theorem)
 if and only if there is a doubly stochastic matrix such that $x = P v_n$. 
This is a classical result of 
Hardy, Littlewood and P{\' o}lya \cite{HLP29} (cf.\ \cite[Section 2]{MOA11}).

\item Second, by Birkhoff's theorem, note that any such $P$ is a 
convex combination of permutation matrices in the set 
$\{M_\sigma:\sigma\in S_n\}$. 

\item Third, construct a tournament associated with each permutation $\sigma\in S_n$, 
with mean score sequence equal to $M_{\sigma} v_n$. 
\end{enumerate}
Since mean score sequences are closed under convex combinations, this gives a proof of Moon's theorem. 

We note here that in \cite{AK18}, in step (1) above, instead of appealing to \cite{HLP29}, 
Strassen's coupling theorem \cite{Str65} is used to obtain a probabilistic proof. 
In this context, majorization $\preceq$ can be viewed as inequality in the convex order
(often also denoted by $\preceq$) 
of uniform probability distributions on discrete multisets. 

Just as signed graphs are the natural setting for Coxeter tournaments, 
signed permutations play a key role in extending the proof of 
Moon's theorem to the Coxeter setting. Recall (see \cref{E_Weyl} above) that 
a signed permutation $\phi\in S_n^{\pm}$ is a bijection of $[\pm n]$ 
such that $\phi(-i) = -\phi(i)$.

\begin{definition}
For a signed permutation $\phi\in S_n^\pm$, the corresponding {\it signed permutation matrix}
$A_\phi$ is the matrix that represents the standard action of $\phi$ on $\RR^n$. That is,
its entries are 
$(A_\phi)_{ij}=\pm1$ 
if $\phi(i) = \pm j$, and 0 otherwise. 
We let ${\rm Perm}_n^\pm$ denote the set of all such matrices.
\end{definition}

\begin{definition} A matrix $A = \{a_{ij}\}\in\RR^{n\times n}$ 
is {\it absolutely doubly sub-stochastic} if
and only if its absolute value ${\rm abs}(A)=\{|a_{ij}|\}$
is {\it doubly sub-stochastic} (non-negative with 
all row and column sums at most $1$). 
We let $\Birk_n^{\pm}$ denote the {\it signed Birkhoff polytope} 
of all such matrices. 
\end{definition}

Birkhoff's theorem generalizes as follows, 
allowing us to generalize step (2). 
See 
Mirsky \cite{Mir59a} and Thompson \cite{Tho77}
(cf.\ \cite[Section 2.C]{MOA11}).

\begin{theorem}
[\hspace{1sp}{\cite[Theorem 4]{Tho77}}]
\label{thm: Coxeter Birkhoff theorem}
The signed Birkhoff polytope 
$\Birk_n^{\pm}$ of absolutely doubly sub-stochastic matrices 
is the convex hull of the set 
${\rm Perm}_n^\pm$
of signed permutation matrices.
\end{theorem}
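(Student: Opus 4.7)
The plan is to prove \cref{thm: Coxeter Birkhoff theorem} by identifying the extreme points of $\Birk_n^\pm$ with the signed permutation matrices ${\rm Perm}_n^\pm$ and invoking the Krein--Milman theorem. Since $\Birk_n^\pm$ is cut out by the finitely many linear inequalities $\sum_j \varepsilon_j A_{i,j} \leq 1$ and $\sum_i \varepsilon_i A_{i,j} \leq 1$ over $\varepsilon \in \{\pm 1\}^n$, it is a polytope, and hence equals the convex hull of its extreme points. The easy direction ${\rm Perm}_n^\pm \subseteq \mathrm{ext}(\Birk_n^\pm)$ follows because for any $A_\phi$, its entrywise absolute value ${\rm abs}(A_\phi)$ is a classical permutation matrix, hence doubly stochastic, so $A_\phi \in \Birk_n^\pm$; and each nonzero entry of $A_\phi$ sits at an endpoint of $[-1, 1]$, forcing any convex decomposition in $\Birk_n^\pm$ to be trivial.

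For the reverse inclusion, I would show that every $A \in \Birk_n^\pm \setminus {\rm Perm}_n^\pm$ is non-extreme by producing a nonzero direction $E$ with $A \pm \epsilon E \in \Birk_n^\pm$. I would split into two cases according to the row and column sums of ${\rm abs}(A)$. If some row or column of ${\rm abs}(A)$ has sum strictly less than $1$, then perturbing any entry in that row or column by a small amount preserves membership in $\Birk_n^\pm$. Otherwise ${\rm abs}(A)$ is doubly stochastic but, since $A \notin {\rm Perm}_n^\pm$, some entry must satisfy $0 < |A_{i,j}| < 1$. The classical Birkhoff cycle argument, applied to the support of ${\rm abs}(A)$, then yields an alternating row--column cycle of entries where $0 < |A| < 1$, along which I would set $E_{i,j} = {\rm sign}(A_{i,j}) \, e_{i,j}$, where $e$ is the usual $\pm 1$ alternating pattern on the cycle.

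The main obstacle is in the second case: verifying that this sign-corrected perturbation $E$ yields $A \pm \epsilon E$ whose absolute values remain doubly sub-stochastic (in fact doubly stochastic). For sufficiently small $\epsilon > 0$, each cycle entry $A_{i,j} \pm \epsilon E_{i,j}$ retains the sign of $A_{i,j}$, since $|A_{i,j}| > 0$ on the cycle, so $|A_{i,j} \pm \epsilon E_{i,j}| = |A_{i,j}| \pm \epsilon e_{i,j}$ along the cycle. The alternating structure of $e$ then forces the row and column sums of the absolute-value perturbation to vanish, so ${\rm abs}(A \pm \epsilon E)$ has the same row and column sums as ${\rm abs}(A)$, namely all equal to $1$. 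Hence $A \pm \epsilon E \in \Birk_n^\pm$, exhibiting $A$ as a non-trivial convex combination and completing the identification $\mathrm{ext}(\Birk_n^\pm) = {\rm Perm}_n^\pm$.
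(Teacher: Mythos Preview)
The paper does not give its own proof of this statement; it is quoted from Thompson \cite{Tho77} (see also Mirsky \cite{Mir59a} and \cite[Section~2.C]{MOA11}) and used as a black box in the geometric proof of \cref{T_cox_moon_complete}. So there is nothing in the paper to compare your argument against.

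Your extreme-point strategy is the standard one and is correct in outline, but Case~1 as written has a small gap. You claim that if some row of ${\rm abs}(A)$ has sum strictly less than~$1$, then perturbing \emph{any} entry in that row keeps you in $\Birk_n^\pm$. This is false: if row~$i$ is deficient but column~$j$ has sum exactly~$1$, then bumping entry $(i,j)$ can push the $j$th column sum of ${\rm abs}(A\pm\epsilon E)$ above~$1$. The easy fix is to observe that the total of the row sums equals the total of the column sums, so a deficient row forces the existence of a deficient column; perturb the entry at the intersection of a deficient row and a deficient column. (If that entry happens to be zero, note that both perturbations $\pm\epsilon$ increase its absolute value by~$\epsilon$, which is still fine since both the row and column have slack.) With this correction, Case~1 goes through, and your Case~2 cycle argument with the sign-corrected perturbation $E_{i,j}={\rm sign}(A_{i,j})\,e_{i,j}$ is correct as stated.
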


The following fact allows us to generalize step (1).

\begin{theorem}
[\hspace{1sp}{\cite[Section 2.C.4]{MOA11}}]
\label{thm: weak majorization matrix}
Let $x,y\in\RR^n$ and suppose that $y$ is non-negative. 
Then $|x|\preceq_w y$ if and only if $|x|=Sy$ for some doubly sub-stochastic matrix $S$,
in which case $x=Ay$ for some absolutely doubly sub-stochastic matrix $A$. 
\end{theorem}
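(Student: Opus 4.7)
The statement breaks naturally into two assertions: a characterization of weak sub-majorization via doubly sub-stochastic matrices, and an upgrade from $|x|=Sy$ to $x=Ay$ with $A$ absolutely doubly sub-stochastic. I would handle the two directions of the equivalence separately and then deduce the upgrade as a short postscript.

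For the easy direction ($|x|=Sy \Rightarrow |x|\preceq_w y$), I would invoke the fact that the polytope of doubly sub-stochastic matrices has as its vertices the \emph{sub-permutation matrices} (0/1 matrices with at most one $1$ in each row and column); this is a standard extension of Birkhoff's theorem and is precisely the content of \cref{thm: Coxeter Birkhoff theorem} restricted to nonnegative entries. For a sub-permutation matrix $P$ and $y\ge 0$, the vector $Py$ is obtained from $y$ by selecting a subset of its entries (possibly relocated), so its top-$k$ sums are dominated by those of $y$, giving $Py\preceq_w y$. Since weak sub-majorization $\preceq_w$ is preserved under convex combinations of the right-multiplier, any doubly sub-stochastic $S$ satisfies $Sy\preceq_w y$.

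For the nontrivial direction ($|x|\preceq_w y \Rightarrow |x|=Sy$), the plan is to reduce to the equal-sum case and then invoke the Hardy--Littlewood--P\'olya theorem. Concretely, set $\tau=\sum_i y_i-\sum_i|x_i|\ge 0$ and define padded vectors
\[
\widetilde{x}=(|x|,\tau)\in\RR^{n+1},\qquad \widetilde{y}=(y,0)\in\RR^{n+1}.
\]
Because $y$ is nonnegative and $|x|\preceq_w y$, one checks that $\widetilde{x}\preceq \widetilde{y}$ as ordinary majorization (same total sum, and the partial sums of $\widetilde{y}^\downarrow$ dominate those of $\widetilde{x}^\downarrow$). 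By Hardy--Littlewood--P\'olya, there is a doubly stochastic $D\in\RR^{(n+1)\times(n+1)}$ with $\widetilde{x}=D\widetilde{y}$. Taking $S$ to be the upper-left $n\times n$ block of $D$ yields a nonnegative matrix whose row and column sums are at most $1$, and the identity $|x|=Sy$ follows from $\widetilde{y}$ having a zero in its last coordinate. The main obstacle is verifying that the padding gives genuine majorization; this amounts to a direct comparison of partial sums using $y\ge 0$, and is the only step that really uses the hypothesis $y\ge 0$.

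Finally, the upgrade from $|x|=Sy$ to $x=Ay$ is immediate: let $\eps_i=\operatorname{sign}(x_i)$ (with the convention $\eps_i=1$ if $x_i=0$) and define $A=(a_{ij})$ by $a_{ij}=\eps_i s_{ij}$. Then $\operatorname{abs}(A)=S$ is doubly sub-stochastic, so $A\in\Birk_n^{\pm}$, and
\[
(Ay)_i=\eps_i\sum_j s_{ij}y_j=\eps_i|x_i|=x_i,
\]
as required.
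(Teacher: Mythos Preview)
The paper does not give its own proof of this statement; it is quoted as a black box from \cite[Section 2.C.4]{MOA11}. So there is no in-paper argument to compare against, and the relevant question is simply whether your proposal is correct.

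The easy direction and the final ``upgrade'' paragraph are fine. The problem is the padding step in the hard direction: your claim that $\widetilde{x}=(|x|,\tau)$ is majorized by $\widetilde{y}=(y,0)$ is false in general. Take $n=2$, $y=(2,1)$ and $|x|=(0,0)$. Then $|x|\preceq_w y$, $\tau=3$, and
\[
\widetilde{x}^\downarrow=(3,0,0),\qquad \widetilde{y}^\downarrow=(2,1,0),
\]
so the top-$1$ partial sums give $3>2$ and $\widetilde{x}\not\preceq\widetilde{y}$. Dumping the entire deficit $\tau$ into a single new coordinate can make that coordinate larger than every entry of $y$, which breaks majorization at $k=1$. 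Nonnegativity of $y$ does not save you here; the issue is purely the size of $\tau$.

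A standard repair (and this is the route taken in \cite{MOA11}) avoids padding altogether: use the characterization that $|x|\preceq_w y$ iff there exists $u$ with $|x|\le u$ entrywise and $u\preceq y$. By Hardy--Littlewood--P\'olya, $u=Dy$ for some doubly stochastic $D$, and then $|x|=\Lambda u$ with $\Lambda=\operatorname{diag}(|x_i|/u_i)\in[0,1]^{n\times n}$ (taking $0/0=0$). The product $S=\Lambda D$ is doubly sub-stochastic and satisfies $Sy=|x|$. Alternatively, if you want to salvage a padding argument, you must pad to dimension $2n$ rather than $n+1$, using the fact that every doubly sub-stochastic $n\times n$ matrix is the upper-left block of a $2n\times 2n$ doubly stochastic matrix.
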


By these results, we obtain the following. 

\begin{corollary} 
Let $\ell$ be the linear map from $\RR^{n\times n} \to \RR^n$ 
which sends matrices $M\mapsto M\rho_{C_n}$, 
where 
\[
\rho_{C_n}=v_n+1_n=(1,2,\ldots,n).
\]
Then the image of $\Birk_n^{\pm}$ under $\ell$ is the Coxeter permutahedron
$\Pi_{C_n}$ of type $C_n$. 
\end{corollary}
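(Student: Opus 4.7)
The plan is to combine the signed Birkhoff theorem with the orbit description of the Coxeter permutahedron $\Pi_{C_n}$. The key point is that the Weyl group $W$ of type $C_n$ is isomorphic to the signed symmetric group $S_n^{\pm}$ (see \cref{E_Weyl}), and its standard action on $\RR^n$ is given precisely by multiplication by the signed permutation matrices $A_\phi$. A direct calculation shows that $(A_\phi \rho_{C_n})_i = \phi(i)$, so since $\rho_{C_n}=(1,2,\ldots,n)$ has distinct positive coordinates, the assignment $A_\phi\mapsto A_\phi\rho_{C_n}$ is a bijection from ${\rm Perm}_n^{\pm}$ onto the $W$-orbit $\{w\cdot\rho_{C_n}:w\in W\}$.

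First I would apply \cref{thm: Coxeter Birkhoff theorem} to write $\Birk_n^{\pm}=\operatorname{conv}({\rm Perm}_n^{\pm})$. Since $\ell$ is linear and convex hulls commute with linear maps,
\[
\ell(\Birk_n^{\pm})=\operatorname{conv}(\ell({\rm Perm}_n^{\pm}))=\operatorname{conv}\{A_\phi\rho_{C_n}:\phi\in S_n^{\pm}\}.
\]
By the identification above, this set equals $\operatorname{conv}\{w\cdot\rho_{C_n}:w\in W\}$, which is by \cref{D_Phiperm} the Coxeter permutahedron $\Pi_{C_n}$.

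There is no real obstacle beyond pinning down the routine identification of signed permutation matrices with the $W$-action on $\RR^n$, which is immediate from \cref{E_Weyl} and the definition of $A_\phi$. As an alternative route, one could instead invoke \cref{thm: weak majorization matrix} to identify $\ell(\Birk_n^{\pm})$ with $\{x\in\RR^n:|x|\preceq_w\rho_{C_n}\}$, and then appeal to the proposition in \cref{S_Wmaj} showing this set coincides with $\Pi_{C_n}$; this bypasses the signed Birkhoff theorem but rests on essentially the same ingredients.
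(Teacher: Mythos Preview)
Your proof is correct and follows essentially the same route as the paper: apply the signed Birkhoff theorem to identify the vertices of $\Birk_n^{\pm}$, use linearity of $\ell$ to pass the convex hull through, and recognize the images $A_\phi\rho_{C_n}$ as the $S_n^{\pm}$-orbit of $\rho_{C_n}$, which is $\Pi_{C_n}$ by definition. The paper's argument is terser but identical in substance; your added remark that $(A_\phi\rho_{C_n})_i=\phi(i)$ and your alternative via \cref{thm: weak majorization matrix} are both fine elaborations.
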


\begin{proof} 
Recall that $\Pi_{\Phi}$ is the convex hull 
of the orbit of the point $\rho_{C_n}$ under the natural action of 
$S_n^{\pm}$ on $\RR^n$.
Since $\ell$ is linear, the image of $\Birk_n^{\pm}$ under $\ell$ 
is the convex hull of the images of the vertices of $\Birk_n^{\pm}$. The image of these 
vertices is the orbit of $\rho_{C_n}$ under the action of $S_n^{\pm}$. The result follows.
\end{proof}

Finally, we generalize step (3). 

\begin{definition} Let $\phi \in S_n^{\pm}$ be a signed permutation. 
The tournament corresponding to $\phi$, denoted by $T_{\phi}$, on $\K_{C_n}$ 
is defined as follows: 

\begin{itemize}
\item for negative edges (competitive games),
 \[p_{ij}^{-}(\phi) = \begin{cases}
 1 & \text{ if $\phi(i) > \phi(j)$} \\
 0 & \text{otherwise};
 \end{cases} \]
\item for positive edges (cooperative games),
 \[p_{ij}^+(\phi) = \begin{cases}
 	1 & \text{if $\phi(i) + \phi(j) > 0$} \\
 	0 & \text{otherwise}; 
 	\end{cases}\]
\item for loops (solitaire games),
 \[ p_{i}^\ell(\phi) = \begin{cases}
  1 & \text{if $\phi(i) > 0$ } \\
  0 & \text{otherwise.}
  \end{cases} \]
\end{itemize}
\end{definition}

Naturally, we interpret $\phi(i)$
as the ``ability'' of player $i$. 
A player is ``strong/weak'' if their ability is positive/negative.
In the above tournament, competitive games are won
by the player that is more able. Strong players win 
solitaire games. Likewise, competitive games are won if 
the combined abilities of the two players equals that of a strong player.

By construction, we have the following. 

\begin{proposition}\label{prop: score sequence of signed tournament} 
Let $\phi \in S_n^{\pm}$ be a signed permutation with signed permutation matrix $A_{\phi}$. 
Then the mean score sequence of $T_{\phi}$ is $A_{\phi} \rho_{C_n}$.
\end{proposition}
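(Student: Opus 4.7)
The plan is to verify the identity coordinate by coordinate. Fix $i \in [n]$ and set $a = \phi(i)$. By the definition of $A_\phi$ and $\rho_{C_n}=(1,2,\ldots,n)$, the $i$-th coordinate of $A_\phi \rho_{C_n}$ is the single nonzero entry in row $i$ times the corresponding coordinate of $\rho_{C_n}$, which equals $\operatorname{sign}(a)\cdot |a| = a$. So the task is to show that the $i$-th coordinate $x_i$ of the mean score sequence of $T_\phi$ equals $\phi(i)$.

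First I would substitute the definition of $T_\phi$ into the formula for $x_i$ from \cref{E_signed_random_tournament}. Since every probability in $T_\phi$ is $0$ or $1$, each term contributes $\pm 1/2$ (for competitive and cooperative games) or $\pm 1$ (for the solitaire game). This rewrites $x_i$ as the sum of three signed counts: $\frac{1}{2}[\#\{j\neq i:\phi(j)<a\}-\#\{j\neq i:\phi(j)>a\}]$ from negative edges, $\frac{1}{2}[\#\{j\neq i:-\phi(j)<a\}-\#\{j\neq i:-\phi(j)>a\}]$ from positive edges (after rewriting $\phi(i)+\phi(j)>0$ as $-\phi(j)<a$), and $\operatorname{sign}(a)$ from the loop.

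The key step is to combine the competitive and cooperative contributions. Since $\phi$ is a signed permutation, the multiset $\{\phi(j),-\phi(j):j\in[n],\,j\neq i\}$ is exactly $[\pm n]\setminus\{a,-a\}$. Hence the combined contribution equals
\[
\tfrac{1}{2}\bigl[\#\{t\in[\pm n]\setminus\{\pm a\}:t<a\}-\#\{t\in[\pm n]\setminus\{\pm a\}:t>a\}\bigr].
\]
A brief case split on the sign of $a$ then finishes the calculation: if $a>0$, the first count is $(n+a-1)-1$ and the second is $n-a$, giving $a-1$; if $a<0$, the first count is $n-|a|$ and the second is $(n+|a|-1)-1$, giving $a+1$. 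Adding $\operatorname{sign}(a)=\pm 1$ yields $a$ in both cases.

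There is no real obstacle here; the result is ultimately a direct computation. The only subtlety worth highlighting is that in the cooperative count it is genuinely important that $-a$ is also removed from $[\pm n]$ when counting with $T=\{\phi(j),-\phi(j):j\neq i\}$, which is exactly what the admissibility condition $\phi(-k)=-\phi(k)$ guarantees. This is the mechanism by which the ``signed'' structure of $S_n^{\pm}$ and $\rho_{C_n}$ line up so cleanly with the three game types of $T_\phi$.
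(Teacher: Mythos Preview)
Your proof is correct and follows essentially the same approach as the paper: both verify coordinate by coordinate that $x_i=\phi(i)$ via a direct count with a case split on the sign of $\phi(i)$. The paper organizes the count by pairing the two games against each opponent $i'$ and observing cancellation when $|\phi(i')|>|\phi(i)|$, whereas you merge both game types into a single count over $[\pm n]\setminus\{\pm a\}$; these are two phrasings of the same computation.
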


\begin{proof} By construction, the mean score sequence of $T_\phi$
is 
\[
(\phi(1),\ldots,\phi(n))= A_{\phi} \rho_{C_n}.
\]
Indeed, if $\phi(i)=j>0$, then player $i$ wins its solitaire game, worth 1 point,
and all competitive and collaborative games against players $i'$
such that $|\phi(i')|<j$, for a total of $j-1$ additional points. The wins and losses
from competitive and collaborative games against all other players cancel out. 
Indeed, if some $\phi(i')>j$ (resp.\ $\phi(i')<-j$) 
then player $i$ loses/wins its competitive/collaborative (resp.\ collaborative/competitive) 
game with player $i'$.  
Hence player $i$ wins $j=\phi(i)$ points in total. 
The case that $\phi(i)=j<0$ is symmetric, and follows by similar reasoning. 
\end{proof}

We are now ready to prove the main result of this section, 
which implies \cref{T_cox_moon_complete} in the case 
that $\Phi=C_n$. 

\begin{theorem} 
\label{T_Cn}
A vector $x \in \RR^n$ is a mean score sequence of a 
$C_n$-tournament on complete graph $\K_{C_{n}}$ 
of type $C_n$ 
if and only if $|x|\preceq_w\rho_{C_n}$.
\end{theorem}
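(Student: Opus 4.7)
The plan is to execute the three-step strategy that was set up in the preceding subsection, with the forward direction handled as an immediate consequence of the Moon-type results already proved.

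For the ``only if'' direction, I would simply invoke \cref{T_main} (or equivalently \cref{C_Coxeter_Moon's_Theorem} specialized to $K_{C_n}$, as spelled out in \cref{E_compCn}): if $x$ is a mean score sequence on $K_{C_n}$, then $x \in Z_{K_{C_n}} = \Pi_{C_n}$, and by \cref{thm:submodular generalized permutahedra} together with the submodular function computed in \cref{E_compCn}, this is equivalent to $|x| \preceq_w \rho_{C_n}$.

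For the ``if'' direction, I would chain together the three ingredients assembled above. Starting from $|x| \preceq_w \rho_{C_n}$ (with $\rho_{C_n}$ non-negative), \cref{thm: weak majorization matrix} produces an absolutely doubly sub-stochastic matrix $A$ with $x = A \rho_{C_n}$. The Mirsky--Thompson generalized Birkhoff theorem (\cref{thm: Coxeter Birkhoff theorem}) then expresses $A$ as a convex combination
\[
A = \sum_{\phi \in S_n^\pm} c_\phi A_\phi, \qquad c_\phi \geq 0, \quad \sum_\phi c_\phi = 1.
\]
Applying the linear map $M \mapsto M \rho_{C_n}$ and invoking \cref{prop: score sequence of signed tournament} gives
\[
x = \sum_\phi c_\phi (A_\phi \rho_{C_n}) = \sum_\phi c_\phi\, \mathbf{s}(T_\phi),
\]
where $\mathbf{s}(T_\phi)$ is the mean score sequence of the signed-permutation tournament $T_\phi$.

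It remains to check that a convex combination of mean score sequences is itself a mean score sequence. This is essentially free in our setup: if $T^{(1)},\ldots,T^{(k)}$ are random $C_n$-tournaments on $K_{C_n}$ with respective probability collections $\{p_e^{(j)}\}$, then for any $c_1,\ldots,c_k \geq 0$ summing to $1$, the tournament with probabilities $p_e = \sum_j c_j p_e^{(j)}$ (which remain in $[0,1]$, and still satisfy $p_{ij}^- = 1 - p_{ji}^-$ and $p_{ij}^+ = p_{ji}^+$) has mean score sequence $\sum_j c_j \mathbf{s}(T^{(j)})$, by linearity of \eqref{E_xi}. So the collection of mean score sequences is convex, and $x$ is realized.

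There is no real obstacle here since all three ingredients have been stated and, in the case of \cref{prop: score sequence of signed tournament}, verified. The only part that required genuine work was \cref{prop: score sequence of signed tournament} itself, which was the combinatorial core of step (3): one must check that the wins and losses of the $T_\phi$ tournament cancel so that player $i$ scores exactly $\phi(i)$ points, and the symmetric case $\phi(i)<0$ must be handled in parallel. The analogous argument in types $B_n$ and $D_n$ would proceed identically, the only changes being the value $\delta_\Phi$ shifting $v_n$ to $\rho_\Phi$, and the restriction of $S_n^\pm$ to even sign-flips in the $D_n$ case.
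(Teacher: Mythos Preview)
Your proposal is correct and follows essentially the same three-step approach as the paper's own proof: apply \cref{thm: weak majorization matrix} to obtain $A$, decompose $A$ via \cref{thm: Coxeter Birkhoff theorem}, and then use \cref{prop: score sequence of signed tournament} together with convexity of the set of mean score sequences. The only cosmetic difference is that the paper writes out the resulting tournament $T_x$ explicitly by setting $p_{ij}^\pm = \sum_\phi \lambda_\phi\, p_{ij}^\pm(\phi)$ and $p_i^\ell = \sum_\phi \lambda_\phi\, p_i^\ell(\phi)$, whereas you phrase this as closure under convex combinations; these are the same argument.
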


\begin{proof} It is clear that these conditions are necessary.  
On the other hand, 
suppose that $|x|\preceq_w\rho_{C_n}$. Then, 
by \cref{thm: weak majorization matrix}, 
there exists an absolutely doubly stochastic matrix $A$ such that $x = A \rho_{C_n}$. 
By \cref{thm: Coxeter Birkhoff theorem}, there exists 
numbers $\lambda_{\phi}\in[0,1]$ for all $\phi \in S_n^{\pm}$ 
summing to $\sum_{\phi \in S_n^{\pm}} \lambda_\phi = 1$ 
and so that 
$A = \sum_{\phi \in S_n^{\pm}} \lambda_{\phi} A_{\phi}$. 
Therefore by  \cref{prop: score sequence of signed tournament},
 \[
 x = \sum_{\phi \in S_n^{\pm}} \lambda_{\phi} A_{\phi} \rho_{C_n} 
 = \sum_{\phi \in S_n^{\pm}} \lambda_{\phi} x_{\phi}, 
 \]
where $x_\phi$ is the mean score sequence of the 
tournament $T_\phi$ corresponding to $\phi$.
Hence, to conclude, consider the random tournament $T_x$ with probabilities
 \[
 p_{ij}^{\pm} = \sum_{\phi \in S_n^{\pm}} \lambda_{\phi} p_{ij}^{\pm}(\phi)
 \]
and
 \[
 p_{i}^\ell= \sum_{\phi \in S_n^{\pm}} \lambda_{\phi} p_i^\ell(\phi). 
 \]
By construction, $T_x$ has mean score
 sequence $x$. 
\end{proof}

\subsection{Probabilistic proof}
\label{S_Strassen}

For $x,y\in \RR^n$, we have that $x\preceq_w y$
(see \cref{S_into_cox_moon_comp}) if and only if  
for all continuous increasing convex functions $\varphi$ we have that 
\begin{equation}\label{E_conv_order}
\sum_i \varphi(x_i)\le \sum_i \varphi(y_i).
\end{equation}
See \cite[Sections 3.C.1.b and 4.B.2]{MOA11} for a proof. 
As such, \cref{thm: weak majorization matrix}
above can be viewed as a special case of 
Strassen's coupling theorem \cite{Str65}, 
in the specific case of uniform probability distributions
on discrete multisets.
Indeed, let $\mu_x$ be uniform on $\{x_1,\ldots,x_n\}$ and 
$\mu_y$ uniform on 
$\{y_1,\ldots,y_n\}$. 
Then $\mu_x$ is bounded by $\mu_y$ in the increasing 
stochastic order, written as $\mu_x\preceq_{\rm inc} \mu_y$, 
if and only if \eqref{E_conv_order}.  
In this case, by \cite{Str65}, there is a coupling, that is, a joint distribution 
of random variables $(X,Y)$ with marginals $\mu_x$ and $\mu_y$, 
for which 
\begin{equation}\label{E_Strassen}
X\le \EE(Y|X). 
\end{equation}
Using this, we give a probabilistic proof of
\cref{T_Cn} above, similar in spirit to the 
``football'' proof of Moon's classical theorem in \cite{AK18}.
See the discussion after \eqref{E_psi} below for 
an informal sports interpretation of the probabilistic construction
given by the following proof. 

\begin{proof}[Proof of \cref{T_Cn}]
Suppose that $x\in\RR^n$ satisfies 
\[
|x|\prec_w\rho_{C_n}=(1,\ldots,n).
\]
Then, applying \eqref{E_Strassen} 
in the case that $X$ is uniform on $\{|x_1|,\ldots,|x_n|\}$
and $Y$ is uniform on $\{1,\ldots,n\}$, we obtain 
sub-probability 
measures $\mu_i$
on $[n]$ for which 
\begin{enumerate}
\item $\sum_{j=1}^n j\mu_i(j)= |x_i|$, for all $i\in[n]$; 
\item $\sum_{i=1}^n\mu_i(j)\le 1$, for all $j\in[n]$.
\end{enumerate}

The matrix $S$ with entries $s_{ij}=\mu_i(j)$ is doubly sub-stochastic.
First, we extend $S$ to a doubly stochastic matrix 
(see, e.g., von Neumann \cite{vonNeu53}) by adding some $\eps_{ij}\in[0,1]$ to each entry, 
that is, so that all rows $\sum_{j=1}^n (s_{ij}+\eps_{ij})=1$ and columns
$\sum_{i=1}^n (s_{ij}+\eps_{ij})=1$.
Then,  we define probability measures $\nu_i$ on $[\pm n]$ 
by 
\[
\nu_i(\pm j)=
\frac{\eps_{ij}}{2}+
s_{ij}
\1_{\pm x_i> 0}.
\]
In other words, for $j\in[n]$, if $x_i$ is positive/negative then we put the extra weight
$s_{ij}$ on positive/negative $j$. 
Note that if $x_i=0$ then all entries $s_{ij}=0$ in the $i$th row of $S$. 
By construction, 
\begin{itemize}
\item[(3)] $\nu_i$ has mean $x_i$, for all $i\in[n]$; 
\item[(4)] $\sum_{i=1}^n[\nu_i(j)+\nu_i(-j)]=1$, for all $j\in[n]$. 
\end{itemize}

For probability measures $\nu,\hat \nu$ let
\[
\psi^\pm(\nu,\hat \nu)
=\frac{1}{2}\PP(X\pm\hat X>0)
-\frac{1}{2}\PP(X\pm\hat X<0)
\]
and
\[
\psi^\ell(\nu)=\PP(X>0)-\PP(X<0),
\]
where $X,\hat X$ are independent random variables distributed as 
$\nu,\hat \nu$. 

We claim that 
\begin{equation}\label{E_psi}
x_i=\psi^\ell(\nu_i)+\sum_{j\neq i}[\psi^-(\nu_i,\nu_j)+\psi^+(\nu_i,\nu_j)].
\end{equation}
Note that, given this, the proof is complete, taking 
\[
p^{\pm}_{ij}=\psi^\pm(\nu_i,\nu_j)+\frac{1}{2}
\]
and
\[
p_i^\ell=\frac{\psi(\nu_i)+1}{2}.
\]

Informally speaking, each time that player $i$ is involved in a game,
they score an independent
number (possibly negative) number of points, distributed as $\nu_i$. 
Competitive games (worth $1/2$ point) are won by the player with higher score
and lost (worth $-1/2$ point) by the other player. In the case of a tie, 
no points are awarded. Likewise, cooperative games are won (worth $1/2$ point each)
by both players if their combined score is positive, and lost (worth $-1/2$ point each)
if their combined score is negative. If their combined score is 0, 
no points are awarded. Finally, in solitaire games, a player wins (worth $1$ point)
if their score is positive, loses (worth $-1$ point) if their score is negative, 
and if their score is 0 then no points are awarded. 

To verify \eqref{E_psi}, we proceed as follows. 
Let $\lambda$ be uniform on $\{-n,\ldots,n\}$. 
Since, by symmetry, all  
\[
\psi^\pm(\delta_k,\lambda)=\frac{k}{2n+1}, 
\]
it follows, by linearity and (3), that
\[
\psi^\pm(\nu_i,\lambda)=\frac{x_i}{2n+1}.
\]
Note that
\[
\psi^\pm(\nu_i,\lambda)
=\frac{1}{2n+1}\frac{1}{2}\psi^\ell(\nu_i)
+\frac{2n}{2n+1}\psi^\pm(\nu_i,\hat \lambda),
\]
where $\hat \lambda$ is uniform on $[\pm n]$.
Therefore
\begin{equation}\label{E_xi1}
x_i=\frac{1}{2}\psi^\ell(\nu_i)+2n\psi^\pm(\nu_i,\hat\lambda).
\end{equation}
By (4), we have that 
\[
\hat \lambda(\cdot)=
\frac{1}{2n}\sum_{i=1}^n[\nu_i(\cdot)+\nu_i(-\cdot)],
\]
and so, by the law of total probability, 
it follows that 
\begin{equation}\label{E_xi2}
2n\psi^\pm(\nu_i,\hat \lambda)
=
\frac{1}{2}\psi^\ell(\mu_i)+\sum_{j\neq i}\psi^-(\mu_i,\mu_j)
+\sum_{j\neq i}\psi^+(\mu_i,\mu_j).
\end{equation}
Combining \eqref{E_xi1} and \eqref{E_xi2}, we obtain 
\eqref{E_psi}, 
as required. 
\end{proof}

\subsection{Algorithmic proof}
\label{S_HH}

Finally, in this section, 
we present a constructive proof of \cref{T_cox_moon_complete},
via a recursive procedure which can be viewed as a 
continuous Coxeter analogue of the 
Havel--Hakimi \cite{H55,H62} algorithm. 
See \cref{T_exHH} for a concrete example.

\begin{table}[h!]
\footnotesize
\begin{center}

\caption{Construction of a type $C_7$ random tournament,  
with mean score sequence 
$x=(-.4,.5,2.3,3.4,-4.1,4.9,-5.2)$, via a Coxeter analogue of the 
Havel--Hakimi algorithm. In this greedy algorithm, 
players prefer to compete/cooperate with weak/strong players, 
and compete/cooperate with strong/weak players
only as necessary.}

\begin{tabular}{l|l|l|l|l|l|l|l} 

\bottomrule
$-5.2$&$4.9$&$-4.1$&$3.4$&$2.3$&$.5$&$-.4$&\\ \hline
$p_7^\ell=0$
&$p_{76}^-=0$&$p_{75}^-=.75$&$p_{74}^-=0$&$p_{73}^-=0$&$p_{72}^-=0$&$p_{71}^-=0$&\\
&$p_{76}^+=1$&$p_{75}^+=0$&$p_{74}^+=.05$&$p_{73}^+=0$&$p_{72}^+=0$&$p_{71}^+=0$&\\ \hline
$-1$&$0$&$-.25$&$-.95$&$-1$&$-1$&$-1$&$-5.2$\\ 

\bottomrule
&$3.9$&$-3.35$&$3.35$&$2.3$&$.5$&$-.4$&\\ \hline
&$q_6^\ell=0$
&$q_{65}^-=0$&$q_{64}^-=1$&$q_{63}^-=.1$&$q_{62}^-=0$&$q_{61}^-=0$&\\
&&$q_{65}^+=1$&$q_{64}^+=0$&$q_{63}^+=0$&$q_{62}^+=0$&$q_{61}^+=0$&\\ \hline
&$1$&$0$&$0$&$.9$&$1$&$1$&$3.9$\\

\bottomrule
&&$-2.35$&$2.35$&$2.2$&$.5$&$-.4$&\\ \hline
&&$p_5^\ell=0$&$p_{54}^-=0$&$p_{53}^-=0$&$p_{52}^-=0$&$p_{51}^-=.275$&\\
&&&$p_{54}^+=1$&$p_{53}^+=1$&$p_{52}^+=.375$&$p_{51}^+=0$&\\ \hline
&&$-1$&$0$&$0$&$-.625$&$-.725$&$-2.35$\\ 

\bottomrule
&&&$1.35$&$1.2$&$.125$&$-.125$&\\ \hline
&&&$q_4^\ell=0$&$q_{43}^-=1$&$q_{42}^-=.475$&$q_{41}^-=.35$&\\
&&&&$q_{43}^+=0$&$q_{42}^+=.35$&$q_{41}^+=.475$&\\ \hline
&&&$1$&$0$&$.175$&$.175$&1.35\\ 

\bottomrule
&&&&$.2$&$0$&$0$&\\ \hline
&&&&$q_3^\ell=0$&$q_{32}^-=.7$&$q_{31}^-=.7$&\\
&&&&&$q_{32}^+=.7$&$q_{31}^+=.7$&\\ \hline
&&&&$1$&$-.4$&$-.4$&.2\\

\bottomrule
&&&&&$0$&$0$&\\ \hline
&&&&&$p_2^\ell=.5$&$p_{21}^-=.5$&\\
&&&&&&$p_{21}^+=.5$&\\ \hline
&&&&&$0$&$0$&0\\ 

\bottomrule
&&&&&&$0$&\\ \hline
&&&&&&$p_1^\ell=.5$&\\ \hline
&&&&&&$0$&0
\end{tabular}\label{T_exHH}

\end{center}
\end{table}

\begin{theorem}
\label{T_Constructing_Coxeter_Tournaments}
Let $\Phi$ be a root system of type $B_n$, $C_n$ or $D_n$
and $x \in \RR^n$. 
If $|x|\preceq_w \rho_\Phi =v_n+\delta_\Phi 1_n$ 
then we can construct a random $\Phi$-tournament on the 
complete $\Phi$-graph $\K_\Phi$ with mean score sequence $x$, 
that is, probabilities such that 
\begin{equation}\label{E_si}
x_i=
\sum_{j\neq i} (p_{ij}^-+p_{ij}^+ -1) 
+
\begin{cases}
p_i^h-1/2&\Phi=B_n\\
2(p_i^\ell-1/2)&\Phi=C_n.
\end{cases}
\end{equation}
\end{theorem}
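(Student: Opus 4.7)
The plan is to prove the theorem by induction on $n$ via a continuous Coxeter analogue of the Havel--Hakimi peeling algorithm, in the spirit of the example table. Given $x\in\RR^n$ with $|x|\preceq_w\rho_\Phi$, relabel the players so that $|x_1|\le\cdots\le|x_n|$ and focus on the extreme player $n$. The strategy is to prescribe probabilities for every game involving player $n$ in one batch so that
\begin{enumerate}
\item the contribution formula \eqref{E_si} for player $n$ yields exactly $x_n$; and
\item the reduced score vector $x'\in\RR^{n-1}$ obtained by subtracting each opponent's contribution from games with player $n$ from $x_j$ still satisfies $|x'|\preceq_w\rho_{\Phi'}$, where $\Phi'$ is the root system of the same type and rank $n-1$.
\end{enumerate}
Granting this reduction, the inductive hypothesis supplies a random $\Phi'$-tournament on $K_{\Phi'}$ realizing $x'$, and splicing in the probabilities for player $n$ yields the required $\Phi$-tournament on $K_\Phi$. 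The base case $n=1$ is trivial: a single solitaire game in types $B_1$ and $C_1$, and nothing to do in type $D_1$.

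The probabilities for player $n$ are chosen greedily, guided by an intuitive principle: a competitive edge is antisymmetric, so a loss by player $n$ is a ``free'' boost of $+\tfrac12$ to the opponent, whereas a cooperative edge is symmetric and so a win/loss shifts both endpoints in the same direction. Assuming without loss of generality that $x_n\le 0$ (the case $x_n\ge 0$ is symmetric by negating all games at player $n$), one first assigns the solitaire game and every competitive game as losses for player $n$, contributing $-\tfrac{n-1}{2}-\delta_\Phi$ to player $n$'s score. The remaining deficit $x_n+\tfrac{n-1}{2}+\delta_\Phi\in[-\tfrac{n-1}{2},0]$ is absorbed by the cooperative probabilities: sweeping opponents from strongest to weakest, set $p_{nj}^+=1$ (a mutual cooperative win) as long as the running contribution to player $n$ has not yet reached $x_n$, then a partial value at a single pivot opponent $j^*$, and then $p_{nj}^+=0$ (a mutual cooperative loss) for the remaining weaker opponents. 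By construction the total contribution to player $n$ equals $x_n$.

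The main obstacle, and the technical heart of the argument, is condition (2): verifying that the reduced vector $x'$ still satisfies weak sub-majorization by $\rho_{\Phi'}$. By the construction of the probabilities above, the contribution to opponent $j$ from games with player $n$ equals $\tfrac12+(p_{nj}^+-\tfrac12)$, which is $+1$ for opponents above the pivot, a controlled value in $[0,1]$ at the pivot, and $0$ for opponents below the pivot; thus $|x'_j|$ can be read off directly. For each $k\in[n-1]$ one must then check $\sum_{i=1}^k|x'|^\downarrow_i\le\sum_{i=1}^k(\rho_{\Phi'})^\downarrow_i$. A case analysis on where the pivot $j^*$ sits relative to the top $k$ absolute values, together with the hypothesis $|x|\preceq_w\rho_\Phi$ applied at the appropriate levels $k$ and $k+1$, reduces each inequality to a direct arithmetic comparison that is tight precisely when the corresponding inequality for $|x|$ was tight. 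The minor differences between types $B_n$, $C_n$, $D_n$ manifest only in the additive shift $\delta_\Phi$ and the presence of a half-edge versus a loop versus no solitaire edge, and do not alter the structure of the induction.
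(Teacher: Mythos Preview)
Your inductive skeleton matches the paper's, but the peeling step contains a genuine gap: setting \emph{every} competitive game at player $n$ to a loss and varying only the cooperative probabilities does not in general preserve the induction hypothesis. Under your scheme the contribution to opponent $j$ is $p_{nj}^+\in[0,1]$, so $x_j'=x_j-p_{nj}^+$. If $x_j<0$ this gives $|x_j'|\ge|x_j|$, so weak opponents can never have their absolute score reduced. A concrete failure: in type $D_2$ (so $\delta_\Phi=0$, $\rho_\Phi=(0,1)$) take $x=(-\tfrac12,-\tfrac12)$, which satisfies $|x|\preceq_w(0,1)$. Your rule forces $p_{21}^-=0$ and then $p_{21}^+=x_2+1=\tfrac12$, whence $x_1'=x_1-\tfrac12=-1$; but the $D_1$ hypothesis requires $|x_1'|\le 0$. (The unique realizing tournament here is $p_{21}^-=\tfrac12$, $p_{21}^+=0$, which your scheme cannot produce.) Separately, your range claim $x_n+\tfrac{n-1}{2}+\delta_\Phi\in[-\tfrac{n-1}{2},0]$ is false in types $B_n,C_n$: when $-\delta_\Phi<x_n\le 0$ the cooperative target $\sum_{j<n}p_{nj}^+=x_n+(n-1)+\delta_\Phi$ exceeds $n-1$ and is infeasible (e.g.\ $x=(0,-\tfrac12)$ in $C_2$).

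The missing idea, and what the paper does, is to let the \emph{sign of $x_j$} determine which edge type carries the adjustment. When $x_n<0$: against a weak opponent ($x_j\le0$) use the competitive edge (player $n$ loses, $j$ wins, so $|x_j|$ decreases); against a strong opponent ($x_j\ge0$) use the cooperative edge (a mutual win, again decreasing $|x_j|$). The paper implements this via a single threshold $\gamma_*$ on the $|x_j|$-axis, with $p_{nj}^\pm$ given by lengths of $[|x_j|-1,|x_j|]\cap[\gamma_*,\infty)$ and $[|x_j|-1,|x_j|]\cap[\gamma_*,0]$ (swapped according to the sign of $x_j$); the verification that $|x'|\preceq_w v_{n-1}+\delta_\Phi 1_{n-1}$ then splits into the cases $\gamma_*>0$ and $\gamma_*\in[-1,0]$ and uses \cref{L_MOA}. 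Your ``single pivot'' picture survives only in the sense that $\gamma_*$ plays the role of the pivot, but the assignment of probabilities on either side of it is not the uniform one you describe.
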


A key ingredient is the following 
result from majorization theory; 
see \cite[Section 4]{MOA11},
and the discussion therein about the results of 
Hardy, Littlewood and P{\' o}lya \cite{HLP29}, 
Karamata \cite{Kar32} and Tom{\'i}c \cite{Tom49}. 

\begin{lemma}[\hspace{1sp}{\cite[Section 4.B]{MOA11}}]
Let $x,y\in\RR^n$. We have that $x\preceq_w y$ if and only if 
\[
\sum_{i}(x_i-y_j)^+\le \sum_{i}(y_i-y_j)^+,\quad \mbox{for all }j\in[n], 
\]
where $z^+=\max\{z,0\}$.
\end{lemma}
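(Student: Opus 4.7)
The plan is to prove the two implications separately. For the forward direction I will apply the convex-increasing characterization of $\preceq_w$ recalled around \eqref{E_conv_order} to a suitable hinge function; for the reverse direction I will use a variational formula writing each partial sum of the non-increasing rearrangement as a minimum over affine-plus-hinge expressions, which turns the finite family of inequalities indexed by $j \in [n]$ into all of $x \preceq_w y$.

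Suppose first that $x \preceq_w y$. For each $j \in [n]$ the function $\varphi_j(t) = (t - y_j)^+ = \max(t - y_j, 0)$ is continuous, non-decreasing, and convex on $\RR$. Applying \eqref{E_conv_order} with $\varphi = \varphi_j$ gives $\sum_i (x_i - y_j)^+ \le \sum_i (y_i - y_j)^+$, which is the displayed inequality.

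For the converse, the key ingredient is the identity
\[
\sum_{i=1}^k z_i^\downarrow \;=\; \min_{c \in \RR} \Bigl[\, kc + \sum_{i=1}^n (z_i - c)^+ \,\Bigr],
\]
valid for every $z \in \RR^n$ and $k \in [n]$. The inequality $\le$ holds for every $c$: for any $S \subseteq [n]$ with $|S| = k$ we have $\sum_{i \in S} z_i \le kc + \sum_{i \in S}(z_i - c)^+ \le kc + \sum_i (z_i - c)^+$, and $\sum_{i=1}^k z_i^\downarrow$ is the maximum of $\sum_{i \in S} z_i$ over such $S$. Equality at $c = z_k^\downarrow$ follows by direct computation: with $a = |\{i : z_i > c\}|$, one has $a \le k-1$ and $z_j^\downarrow = c$ for $a < j \le k$, so $kc + \sum_i(z_i - c)^+ = kc + \sum_{j=1}^a (z_j^\downarrow - c) = \sum_{j=1}^k z_j^\downarrow$.

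Granting the identity, fix $k \in [n]$ and choose $j \in [n]$ with $y_j = y_k^\downarrow$ (possible since $y^\downarrow$ is a rearrangement of $y$). Applying the variational inequality to $x$ at $c = y_j$, then the hypothesis at this $j$, and finally the variational identity to $y$ at $c = y_j = y_k^\downarrow$, we obtain
\[
\sum_{i=1}^k x_i^\downarrow \;\le\; k y_j + \sum_i (x_i - y_j)^+ \;\le\; k y_j + \sum_i (y_i - y_j)^+ \;=\; \sum_{i=1}^k y_i^\downarrow,
\]
which is $x \preceq_w y$. The only non-trivial step is the elementary variational identity, so there is no serious obstacle; the content of the lemma is the reduction of the infinite family of convex-increasing tests \eqref{E_conv_order} to the finite set of hinge tests at the values $c \in \{y_1, \ldots, y_n\}$ realized by $y$ itself.
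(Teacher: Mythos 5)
Your proof is correct. Note that the paper itself does not prove this lemma at all: it is quoted verbatim from \cite[Section 4.B]{MOA11} (with a pointer to the underlying results of Hardy--Littlewood--P\'olya, Karamata and Tomi\'c), so there is no in-paper argument to compare against. Your two halves are sound: the forward implication is the specialization of the convex-increasing test \eqref{E_conv_order} to the hinge $\varphi_j(t)=(t-y_j)^+$, and the converse rests on the variational identity $\sum_{i=1}^k z_i^\downarrow=\min_{c}\bigl[kc+\sum_i(z_i-c)^+\bigr]$, which you verify correctly (the upper bound for every $c$, and equality at $c=z_k^\downarrow$ after the bookkeeping with $a=|\{i:z_i>c\}|\le k-1$). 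Evaluating at $c=y_k^\downarrow$ and invoking the hypothesis at the corresponding index $j$ then yields $\sum_{i=1}^k x_i^\downarrow\le\sum_{i=1}^k y_i^\downarrow$ for each $k$, which is exactly $x\preceq_w y$. This is essentially the standard textbook route (a discrete Legendre-type duality reducing the infinite family of increasing convex tests to the $n$ hinge tests at the values of $y$), and it is self-contained apart from the citation of \eqref{E_conv_order}, which the paper also takes from \cite{MOA11}. The only cosmetic caveat is that $\varphi_j$ is non-decreasing rather than strictly increasing; under the usual convention in \cite{MOA11} ``increasing'' means non-decreasing, and in any case one could perturb $\varphi_j$ by $\eps t$ and let $\eps\to0$.
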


We will use the following special case. 

\begin{lemma}
\label{L_MOA}
Let $\Phi$ be a root system of type $B_n$, $C_n$ or $D_n$
and $x \in \RR^n$. 
Then $|x|\preceq_w \rho_\Phi$ if and only if 
\[
\sum_{i}\phi_\ell (|x_i|)\le {n-(\ell-\delta_\Phi)\choose2},
\quad \mbox{for all }\ell\in\{\delta_\Phi,1+\delta_\Phi,\ldots,n-1+\delta_\Phi\}, 
\]
where $\phi_\ell(z)=(z-\ell)^+$.
\end{lemma}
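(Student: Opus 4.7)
The plan is to deduce this directly from the general majorization criterion stated just before, specialized to the particular vector $\rho_\Phi = (\delta_\Phi, 1+\delta_\Phi, \ldots, n-1+\delta_\Phi)$. No new ideas are needed beyond this specialization and an arithmetic computation of the right-hand side.

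First I would set $y = \rho_\Phi$ in the cited MOA criterion. That result says $|x| \preceq_w \rho_\Phi$ if and only if
\[
\sum_i (|x_i| - (\rho_\Phi)_j)^+ \le \sum_i ((\rho_\Phi)_i - (\rho_\Phi)_j)^+
\]
for every $j \in [n]$. Since the entries of $\rho_\Phi$ are exactly the numbers $\ell \in \{\delta_\Phi, 1+\delta_\Phi, \ldots, n-1+\delta_\Phi\}$ (each occurring once), letting $\ell$ range over these values is the same as letting $(\rho_\Phi)_j$ range over the coordinates of $\rho_\Phi$. The left-hand side then becomes $\sum_i \phi_\ell(|x_i|)$, matching the statement of the lemma.

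Next I would compute the right-hand side for $\ell = m + \delta_\Phi$, where $m \in \{0, 1, \ldots, n-1\}$. Since the positive parts $(k + \delta_\Phi - \ell)^+$ for $k \in \{0,1,\ldots,n-1\}$ vanish when $k < m$ and equal $k - m$ when $k \ge m$,
\[
\sum_i ((\rho_\Phi)_i - \ell)^+ = \sum_{k=m}^{n-1} (k - m) = \sum_{j=0}^{n-1-m} j = \binom{n-m}{2} = \binom{n - (\ell - \delta_\Phi)}{2}.
\]
This gives exactly the bound in the statement.

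The two steps combine to yield the equivalence immediately. There is no real obstacle here; the ``hard work'' was already done in the cited MOA result, and what remains is simply the index shift and the telescoping computation of an arithmetic sum into a binomial coefficient.
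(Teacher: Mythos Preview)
Your proposal is correct and matches the paper's approach exactly: the paper introduces this lemma explicitly as ``the following special case'' of the cited MOA criterion with $y=\rho_\Phi$, and does not give any further argument. Your specialization and arithmetic computation of $\sum_i((\rho_\Phi)_i-\ell)^+=\binom{n-(\ell-\delta_\Phi)}{2}$ simply fill in the routine details the paper omits.
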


Although the details of the following proof are somewhat technical, the overall idea
is rather intuitive, and boils down to 
a natural greedy algorithm. After all 
relevant quantities have been defined, we will give a detailed
informal description of the construction,
 after \eqref{E_gamma*} below. 
 
\begin{proof}[Proof of \cref{T_Constructing_Coxeter_Tournaments}]
The proof is by induction on $n$. 
Let $x\in \RR^n$ with $|x|\preceq_w (v_n+\delta_\Phi1_n)$ be given. 
For ease of exposition, and without loss of generality, we assume  that 
$|x_{1}|\le |x_{2}|\le\cdots\le |x_{n}|$. 

In the base case $n=1$ we have 
$|x_1|\le\delta_\Phi$. 
If $\Phi=D_1$, we are done, since $x_1=0$ and there are no 
probabilities to be defined. 
On the other hand, if $\Phi=B_1$ (resp.\ $\Phi=C_n$) then $|x_1|\le1/2$
(resp.\ $|x_1|\le1$). In these cases, we put 
$p_1^h=x_1+1/2$ (resp.\ $p_1^\ell=(x_1+1)/2$). 

For the inductive step, 
we describe  
a recursive algorithm that, in each step, assigns probabilities 
to all games involving the most extreme (either the weakest or strongest, 
whichever is more extreme) remaining 
player. 

Note that if $x_n=0$, then in fact all $x_j=0$. 
In this case, we can simply put all $p_{ij}^\pm=1/2$ and $p_i^h=1/2$
(resp.\  $p_i^\ell=1/2$) if in type
$B_n$ (resp.\ $C_n$). 
Hence, suppose that $x_n\neq 0$. 
We first consider the case that $x_{n}< 0$. 
The case that $x_{n}> 0$ follows by a symmetric argument
(as explained in Case 2 below). 

{\bf Case 1 ($x_{n}< 0$).} 
In this case, we find $p^\pm_{nj}$ such that 
\begin{align}\label{E_HH}
x_{n}&=\sum_{j< n} (p_{nj}^- -1/2) 
+\sum_{j< n} (p_{nj}^+ -1/2)
+
\begin{cases}
p_{n}^h-1/2&\Phi=B_n\\
2(p_{n}^\ell-1/2)&\Phi=C_n
\end{cases}\nonumber\\
&
=-(n-1+\delta_\Phi)+\sum_{j< n} (p_{nj}^-+p_{nj}^+)
+
\begin{cases}
p_{n}^h&\Phi=B_n\\
2p_{n}^\ell&\Phi=C_n
\end{cases}
\end{align}
and 
\begin{equation}\label{E_ind}
|x'|=(|x_1'|,\ldots,|x_{n-1}'|)\preceq_w (v_{n-1}+\delta_\Phi1_{n-1}), 
\end{equation}
where 
\[
x_j'=x_j+p_{nj}^- - p_{nj}^+,\quad j\neq n.
\]
Note that 
\[
x_j'+(1/2-p_{nj}^-)+(p_{nj}^+-1/2)=x_j'+p_{nj}^+-p_{nj}^-=x_j, 
\]
so (informally speaking)  $x_j'$ is the 
average number of points yet to be earned by player $j< n$, 
after winning on average $p_{nj}^+-p_{nj}^-$ points from games with player $n$. 
We also note that the $p_{nj}^\pm$ will be chosen in such a way 
 that order is preserved, that is,  so that $|x_1'|\le \cdots\le|x_{n-1}'|$. 

Since $|x|\preceq_w \rho_\Phi$, we have $|x_{n}|-\delta_\Phi\le n-1$. 
Therefore, since $x_{n}< 0$, it follows that $n-1+\delta_\Phi+x_{n}\ge0$. 

For $j< n$, let  $I_j=[|x_j|-1,|x_j|]$ be the unit interval
with right endpoint  $|x_j|$. For $\gamma\ge-1$, let 
\begin{align*}
\ell_j(\gamma)
&={\rm length}(I_j\cap [\gamma,\infty))+{\rm length}(I_j\cap [\gamma,0])\\
&=
\begin{cases}
{\rm length}([\gamma,\infty)\cap I_j)&\gamma\ge0\\
{\rm length}(I_j\cap [0,\infty))+2\cdot {\rm length}(I_j\cap [\gamma,0])&\gamma<0.
\end{cases}
\end{align*}
In other words, $\ell_j(\gamma)$ is the length of the interval to the right of 
$\gamma$, plus any such length to the left of the origin counted twice. 

Since $\delta_\Phi\le1$ and $|x_n|\ge|x_j|$ for all $j<n$, 
it follows that 
\[
\delta_\Phi-|x_{n}|\le  \sum_{j< n}(1-|x_j|)\1_{|x_j|<1}.
\]
Therefore, since $x_n<0$,  
we have that 
\[
\sum_{j< n}
\ell_j(-1)\\
=\sum_{j< n}(\1_{|x_j|\ge1}+(2-|x_j|)\1_{|x_j|<1})
\ge 
n-1+\delta_\Phi+x_n\ge0.
\]
Note that $\sum_{j< n}\ell_j(\gamma)$ is decreasing continuously 
in $\gamma\ge-1$. 
Hence 
select (the unique) such $\gamma_*\in[-1,\infty)$ for which 
\begin{equation}\label{E_gamma*}
\sum_{j< n}
\ell_j(\gamma_*)
=n-1+\delta_\Phi+x_{n}. 
\end{equation}
Using this quantity, we define the probabilities 
$p_{nj}^\pm$ as follows: 
\begin{itemize}[nosep]
\item if $x_j\le 0$, put 
$p_{nj}^-={\rm length}(I_j\cap [\gamma_*,\infty))$ 
and $p_{nj}^+={\rm length}(I_j\cap [\gamma_*,0])$,
\item  if $x_j\ge 0$, put
$p_{nj}^+={\rm length}(I_j\cap [\gamma_*,\infty))$ 
and $p_{nj}^-={\rm length}(I_j\cap [\gamma_*,0])$,
\item if $\Phi=B_n$ (resp.\ $C_n$) put $p_n^h=0$
(resp.\ $p_n^\ell=0$). 
\end{itemize}

Before continuing with the formal proof, 
let us discuss the general intuition behind our 
construction, and our proof strategy going forward:

\begin{quote}
The choice of $\gamma_*$, and the 
probabilities $p_{nj}^+$ and $p_n^h$ (or $p_n^\ell$) that it determines, 
has the following
natural interpretation in terms of a  
 greedy strategy for player $n$. 
Let us assume (Case 1 below) that player $n$ is a  
weak player, $x_n< 0$. (The other case is symmetric, 
see Case 2 below.) 
In this case, they lose/forfeit their solitaire game, $p_n^\ell=0$. 
We must then select the remaining probabilities in such a 
way that 
\[
\sum_{j< n} (p_{nj}^-+p_{nj}^+)=n-1+\delta_\Phi+x_n.
\]
To select such a $\gamma^*$, 
we think of a system of $n-1$ labelled particles on $\RR$, 
where the $j$th particle is placed at position $|x_j|$. 
Particles $j$ farther to the right
correspond to players $j$ that player $n$ would prefer to compete/cooperate with, depending on 
whether $x_j$ is negative/positive (that is, if player $j$ is weak/strong). 
Hence a natural greedy strategy for 
player $n$ is as follows. 
Imagine a 	``slider'' moving at unit rate towards the origin, initially starting
from the right of all particles. Once the slider touches a particle, 
it is ``picked up'' and slides along with it. Particles can travel for at most
a unit distance, at
which point they are ``dropped off.'' 
There are two cases to consider. 

{\it Case 1a.} If the total distance 
travelled to the left by all particles equals $n-1+\delta_\Phi+x_n$
once the slider reaches some point $\gamma_*>0$ to the right of the origin, 
then we simply let $p_{nj}^-$ (resp.\ $p_{nj}^+$) be the  distance
travelled to the left by particle $j$ if $x_j<0$ (resp.\ $x_j>0$), given by 
${\rm length}(I_j\cap [\gamma_*,\infty))$. 
Note that, in this case,  player $n$ has managed to win its required (average) number
of points by only competing/cooperating with weak/strong players. 
 
{\it Case 1b.} On the other hand, if some particles reach the origin before the total distance travelled 
reaches $n-1+\delta_\Phi+x_n$, then we modify the construction as follows. In this case, 
player $n$ is not be able to avoid competing/cooperating with some strong/weak players. 
Once the slider reaches the origin, we imagine particles at the origin simultaneously traveling 
to the left and right at the same rate
(effectively, being held in place) until they have traveled a unit distance or else the total distance 
travelled (to the left and right)
by all particles
reaches $n-1+\delta_\Phi+x_n$ (whichever comes first). 
At this point, for some $\gamma_*\in [-1,0]$, note that the $j$th particle 
will have travelled ${\rm length}(I_j\cap [\gamma_*,\infty))$  
to the left and ${\rm length}(I_j\cap [\gamma_*,0])$ to the right. 
If $x_j<0$ (resp.\ $x_j>0$) we let $p_{nj}^-$ and $p_{nj}^+$ (resp.\ $p_{nj}^+$ and $p_{nj}^-$) 
be these distances travelled 
to the left and right. Note that, in this case, player $n$ prioritizes 
competition/cooperation with weak/strong players, and cooperates/competes with such
players only as necessary. 
\end{quote}

We return to the formal proof. Note that \eqref{E_HH} holds by the choice of $\gamma_*$, 
and that by construction we have $|x_1'|\le \cdots\le |x_{n-1}'|$. 
Next, we verify \eqref{E_ind}. 
In doing so, we take cases with respect to whether 
$\gamma_*\in[-1,0]$ or $\gamma_*>0$.

{\bf Case 1a.} Suppose that $\gamma_*>0$.
Then 
\[
|x_j'|=
\begin{cases}
\max\{\gamma_*,|x_j|-1\}&|x_j|\ge\gamma_*\\
|x_j|&|x_j|<\gamma_*.
\end{cases}
\]
In this case, we appeal to \cref{L_MOA}.
Let $\ell\in\{\delta_\Phi,1+\delta_\Phi,\ldots,n-2+\delta_\Phi\}$. 
If $\ell>\gamma_*$ then 
\begin{multline*}
\sum_{j=1}^{n-1}\phi_\ell(|x_{j}'|)
=\sum_{j=1}^ {n-1}\phi_\ell(|x_{j}|-1)
=\sum_{j=1}^{n-1}\phi_{\ell+1}(|x_{j}|)\\
\le \sum_{j=1}^{n}\phi_{\ell+1}(|x_j|)
\le{n-(\ell+1-\delta_\Phi)\choose 2}
={(n-1)-(\ell-\delta_\Phi)\choose2}.
\end{multline*}
Otherwise, if  $\ell\le\gamma_*$ then by construction 
we have that 
\[
\sum_{j=1}^{n-1}[\phi_\ell(|x_{j}|)-\phi_\ell(|x_{j}'|)]
=n-1+\delta_\Phi+x_{n}.
\]
Therefore, since 
$\gamma_*\le |x_{n}|$ 
and $x_{n}\le 0$, 
it follows that 
\[
\sum_{j=1}^{n}\phi_\ell(|x_j|)-\sum_{j=1}^{n-1}\phi_\ell(|x_{j}'|)
=\phi_\ell(|x_{n}|)+n-1+\delta_\Phi+x_{n}=n-1+\delta_\Phi-\ell,
\]
and so 
\begin{multline*}
\sum_{j=1}^{n-1}\phi_\ell(|x_{j}'|)
=\sum_{j=1}^{n}\phi_\ell(|x_j|)-(n-1+\delta_\Phi-\ell)\\
\le  {n-(\ell-\delta_\Phi)\choose2}-[(n-1)-(\ell-\delta_\Phi)]
={(n-1)-(\ell-\delta_\Phi)\choose2}.
\end{multline*}
Therefore, by \cref{L_MOA}, we find that  $|x'|\preceq_w \rho_\Phi$,
as required. 

{\bf Case 1b.} On the other hand, suppose that $\gamma_*\in[-1,0]$. 
In this case, we show that $|x'|\preceq_w (v_{n-1}+\delta_\Phi 1_{n-1})$
by appealing directly to the definition of weak sub-majorization. 
Note that, in this case, we
have that  
all $|x_j'|=(|x_j|-1)^+$. 
Since $|x|\preceq_w \rho_\Phi=v_n+\delta_\Phi 1_n$, we have that, 
for any $S\subseteq [n]$ of size $k$, 
\[
\sum_{j\in S}|x_j|\le 
k\delta_\Phi+\sum_{j=1}^k(n-j)
={k\choose2}+k(n-k+\delta_\Phi). 
\] 
Therefore,  if $S\subset[n-1]$ is of size $k$, then 
\[
\sum_{j\in S}|x_j'|
= \sum_{j\in S'}(|x_j|-1)
\le {k'\choose2}+k'[(n-1)-k'+\delta_\Phi]
\]
where $S'$ of size $k'\le k$ is the set of $j\in S$ for which $|x_j|>1$.
Since the right-hand side is non-decreasing in $k'\le k$, it follows that 
\[
\sum_{i\in S}|x_i'|
\le {k\choose2}+k[(n-1)-k+\delta_\Phi].
\]
Hence $|x'|\preceq_w (v_{n-1}+\delta_\Phi 1_{n-1})$, as required. 

This concludes the proof in Case 1. 

{\bf Case 2 ($x_{n}> 0$).} 
On the other hand, if $x_n\ge0$, we can 
instead find the probabilities  $q_{nj}^\pm=1-p_{nj}^\pm$
and $q_n^h=1-p_n^h$ (or $q_n^\ell=1-p_n^\ell$)  
by a symmetric argument. 
As before, fix (the unique) $\gamma_*$ such that 
\[
\sum_{j< n}
\ell_j(\gamma_*)
=n-1+\delta_\Phi-x_{n}.
\]
Then define probabilities as follows:  
\begin{itemize}[nosep]
\item if $x_j\le 0$, put 
$q_{nj}^+={\rm length}(I_j\cap [\gamma_*,\infty))$ 
and $q_{nj}^-={\rm length}(I_j\cap [\gamma_*,0])$,
\item  if $x_j\ge 0$, put 
$q_{nj}^-={\rm length}(I_j\cap [\gamma_*,\infty))$ 
and $q_{nj}^+={\rm length}(I_j\cap [\gamma_*,0])$,
\item if $\Phi=B_n$ (resp.\ $C_n$) put $q_n^h=0$
(resp.\ $q_n^\ell=0$).
\end{itemize}
By construction, we have that 
\begin{equation}\label{E_HH'}
\sum_{j< n} (q_{nj}^-+q_{nj}^+)
=n-1+\delta_\Phi-x_n.
\end{equation}
Arguing as in Case 1, it can be shown that 
\begin{equation}\label{E_ind'}
|x'|=(|x_{1}'|,\ldots,|x_{n-1}'|)\preceq_w (v_{n-1}+\delta_\Phi1_{n-1}), 
\end{equation}
where 
\[
x_j'=x_j-q_{nj}^- + q_{nj}^+,\quad j< n.
\]

This concludes the proof in Case 2. 

To finish the proof, we note that all 
probabilities $p_{ij}^\pm$ and $p_i^h$ (or $p_h^\ell$) can be 
defined recursively by the above procedure, 
beginning with $i=n$.
\end{proof}


\makeatletter
\renewcommand\@biblabel[1]{#1.}
\makeatother

\providecommand{\bysame}{\leavevmode\hbox to3em{\hrulefill}\thinspace}
\providecommand{\MR}{\relax\ifhmode\unskip\space\fi MR }
\providecommand{\MRhref}[2]{%
  \href{http://www.ams.org/mathscinet-getitem?mr=#1}{#2}
}
\providecommand{\href}[2]{#2}

\end{document}